\DeclareMathAlphabet{\mathpzc}{OT1}{pzc}{m}{it}
\DeclarePairedDelimiter{\norm}{\|}{\|}
\theoremstyle{plain}
\newtheorem*{maintheorem*}{Main Theorem}
\newtheorem*{thm*}{Theorem}
\newtheorem*{thma*}{Theorem A}
\newtheorem*{thmaa*}{Theorem A'}
\newtheorem*{thmb*}{Theorem B}
\newtheorem*{thmo*}{Theorem 1.1}
\newtheorem*{thmc*}{Theorem C}
\newtheorem*{thmd*}{Theorem D}
\newtheorem*{thmf*}{Theorem 4.1}
\newtheorem*{remark*}{Remark}
\newtheorem*{conjecture*}{Conjecture}
\newtheorem*{prop*}{Proposition}
\newtheorem*{lem*}{Lemma}
\def\bbz{\mathbb{Z}}
\def\bbq{\mathbb{Q}}
\def\bbr{\mathbb{R}}
\def\bbc{\mathbb{C}}
\def\hcal{\mathcal{H}}
\def\Rfrak{\mathfrak{R}}
\def\Ifrak{\mathfrak{I}}
\newcommand{\be}{\begin{equation}}
\newcommand{\ee}{\end{equation}}
\newcommand{\Ext}{\operatorname{Ext}}
\def\rhr{H^1(M,\Sigma,\bbr)}
\def\hr{H^1(M,\bbr)}
\def\strat1{\hcal_1(\alpha)}
\def\supp{{\rm{supp}}}
\def\setminus{-}
\newcommand{\bb}{\mathbb}
\newcommand{\gothic}{\mathfrak}
\newcommand{\cx}{{\bb C}}
\newcommand{\integers}{{\bb Z}}
\newcommand{\reals}{{\bb R}}
\newlength{\figboxwidth}             
\renewcommand{\bold}[1]{\medskip \noindent {\bf #1 }\nopagebreak}
\newcommand{\cross}{\times}
\newcommand{\gammabar}{{\overline{\gamma}}}
\newcommand{\zed}{\integers}
\newcommand{\SL}{\operatorname{SL}}
\newcommand{\dif}{\operatorname{d}}
\newcommand{\AGY}{\operatorname{AGY}}
\def\@ifundefined#1#2#3%
\theoremstyle{plain} 
\newtheorem{theorem}{Theorem}[section]
\newtheorem{prop}[theorem]{Proposition}
\newtheorem{lemma}[theorem]{Lemma}
\newtheorem{cor}[theorem]{Corollary}
\newtheorem{corollary}[theorem]{Corollary}
\newtheorem{claim}[theorem]{Claim}
\newtheorem{theo}{Theorem}[section]
\newtheorem{coro}[theo]{Corollary}
\theoremstyle{definition} 
\newtheorem{definition}[theorem]{Definition}
\newtheorem{remark}[theorem]{Remark}
\newcommand{\cB}{{\mathcal B}}
\newcommand{\cF}{{\mathcal F}}
\newcommand{\cH}{{\mathcal H}}
\newcommand{\cS}{{\mathcal S}}
\newcommand{\cU}{{\mathcal U}}
\newcommand{\cV}{{\mathcal V}}
\newcommand{\cW}{{\mathcal W}}
\mathchardef\GG="321D
\newcommand{\gp}{{\gothic p}}
\newcommand{\noz}{k}
\newcounter{consta}
\renewcommand{\theconsta}{{\kappa_{\arabic{consta}}}}
\newcounter{constb}[section]
\newcounter{constc}[section]
\newcounter{constA}
\renewcommand{\theconstA}{{N_{\arabic{constA}}}}
\newcommand{\consta}{\refstepcounter{consta}\theconsta}
\newcommand{\constA}{\refstepcounter{constA}\theconstA}
\newcommand{\Teich}{\mathcal T}
\newcommand{\QMS}{\mathcal{Q}_1(S)}
\newcommand{\Qalpha}{\mathcal{Q}_1(1,\ldots,1)}
\newcommand{\Halpha}{\mathcal H_1(\alpha)}
\newcommand{\Amnfld}{\mathcal Q_1(\alpha)}
\newcommand{\tAmn}{\mathcal Q^1\mathcal T(\alpha)}
\newcommand{\ipt}{k}
\newcommand{\ML}{\mathcal{ML}(S)}
\newcommand{\PML}{{\rm P}\ML}
\newcommand{\MC}{\operatorname{Mod}_g}
\newcommand{\Sob}{\mathcal C^1}
\newcommand{\intsc}{{i}}
\newcommand{\ncc}{\mathcal N_{\rm nc}}
\newcommand{\nlifts}{\mathcal N}
\newcommand{\tW}{\tilde{W}}
\newcommand\tpbox{{{\mathsf B}}}
\newcommand\Hgood{\mathsf H^{\unst}_{t,{\rm int}}}
\newcommand\ER{\mathsf{E}}
\newcommand{\ET}{\mathsf{ET}}
\newcommand{\Net}{{\mathsf {Net}}}
\newcommand{\tNet}{\tilde{\mathsf {Net}}}
\newcommand{\mce}{{\bf g}}
\newcommand{\ttc}{{U(\tau)}}
\newcommand{\ttco}{{{P}(\tau)}}
\newcommand{\ttcod}{{P}_{\geq\delta}(\tau)}
\newcommand{\ttcn}{{\mathsf c}_{{\bf a}}}
\newcommand{\Nip}{{\mathcal N}}
\newcommand{\Ip}{{\mathcal O}_\tau}
\newcommand{\muth}{\mu_{{\rm Th}}}
\newcommand{\mlhoro}{{W^{\unst}}}
\newcommand{\tmlhoro}{{\tilde W^{\unst}}}
\newcommand{\mlsign}{{}}
\newcommand{\PU}{\tilde W^{\rm cs}}
\newcommand{\hubmas}{{\tilde{\mathcal P}_1}}
\newcommand{\bhubmas}{{\pi\circ {\tilde{\mathcal P}_1}^{-1}}}
\newcommand{\lipc}{L}
\newcommand{\cube}{\mathcal U}
\newcommand{\unst}{{\rm u}}
\newcommand{\stbl}{{\rm s}}
\newcommand{\vM}{X}
\title[Effective counting of simple closed geodesics]{Effective counting of simple closed geodesics on hyperbolic surfaces}
\author{Alex Eskin}
\thanks{A.E.\ acknowledges support by the NSF and the Simons Foundation}
\address{A.E.\ Department of Mathematics, University of Chicago, Chicago, IL 60637}
\email{eskin@math.uchicago.edu}
\author{Maryam Mirzakhani}
\author{Amir Mohammadi}
\thanks{A.M.\ acknowledges support by the NSF and Alfred P.\ Sloan Research Fellowship}
\address{A.M.\ Mathematics Department, UC San Diego, 9500 Gilman Dr, La Jolla, CA 92093}
\email{ammohammadi@ucsd.edu}
\date{}
\begin{document}
\maketitle
\begin{abstract}
We prove a quantitative estimate, with a power saving error term, 
for the number of simple closed geodesics of length at most $L$ on 
a compact surface equipped with a Riemannian metric of negative curvature. 
The proof relies on the exponential mixing rate for the Teichm\"{u}ller geodesic flow.   
\end{abstract}

\section{Introduction}
\label{sec:intro}
Let $g \ge 2$,\index{g@$g$ the genus of $S$} and let $S$\index{S@$S$ a compact surface of genus $g$} be a compact Riemann surface of genus $g$.
Let $\Teich(S)$\index{T@$\Teich(S)$ the Teichm\"{u}ller space of $S$} be the Teichm\"{u}ller space 
of complete hyperbolic metrics on $S$, and let 
\[
\mathcal M(S)\index{M@$\mathcal M(S)$ the moduli space of $S$}=\Teich(S)/\MC
\] 
be the corresponding moduli space,
where $\MC$\index{M@$\MC$ the mapping class group of $S$} is the mapping class group of $S$. 

Let $M\in \mathcal M(S)$\index{M@$M$ a hyperbolic structure on $S$}. 
Problems related to the asymptotic growth rate of the number of closed geodesics on $M$ 
have been long studied. 
In particular, thanks to works of Delsart, Huber, and Selberg we have the following:
There exists some $\delta=\delta(M)>0$ so that the number of closed geodesics of length at most $L$ on $M$ equals 
\be\label{eq:closed-geod}
\operatorname{Li}(e^L) + O_M(e^{L-\delta}),
\ee
where $\operatorname{Li}(x)=\int_2^x\frac{dt}{\log t}$; see~\cite{Bus} and references there.

More generally, the growth rate of the number of closed
geodesics on a negatively curved compact manifold was studied by Margulis,~\cite{Margulis-Thesis}.  
His proof, which is different from the above mentioned works, 
is based on the mixing property of the Margulis measure for the geodesic flow. 
In the constant negative curvature case, Margulis' method combined with an exponential mixing rate for the geodesic flow, 
also provides an estimate like~\eqref{eq:closed-geod} --- albeit with a weaker power saving $\delta$, see e.g.~\cite{MMO-Geod}.    

\subsection{Simple closed geodesics}
The aforementioned fundamental results do not provide any estimates for the number of simple closed geodesics on $M$. 
Indeed, very few closed geodesics on $M$ are simple,~\cite{BS2}, and it is hard to
discern them in $\pi_1(M)$,~\cite{BS1}. More explicitly, it was shown in~\cite{Ri} that 
the number of {\em simple} closed geodesics of length at most 
$L$ on $M$ is bounded above and below by $O_M(L^{6g-6})$.

In her PhD thesis,~\cite{M-Thesis} and~\cite{M-SimpleCl}, Mirzakhani proved an asymptotic growth rate for the number of simple closed geodesics 
of a given topological type on a hyperbolic surface $M$ --- recall that two simple closed geodesics $\gamma$\index{g@$\gamma$ a rational multicurve} and $\gamma'$ on $M$ are of the same topological type if there exists some $\mce\in\MC$\index{g@$\mce$ a mapping class} so that $\gamma'=\mce\gamma$. 

Let $\vM$\index{X@$\vM$ a Riemannian metric of negative curvature on $S$} be a compact surface equipped with a Riemannian metric of negative curvature. 
We emphasize that the curvature is not assumed to be constant; indeed, elements in $\mathcal M(S)$ 
will be denoted by $M$ to minimize the confusion. 
By a multi-geodesic $\gamma$ on $\vM$ we mean 
$\gamma=\sum_{i=1}^d a_i\gamma_i$
where $\gamma_i$'s are disjoint, essential, simple closed geodesics, and $a_i >0$ for all $1\leq i\leq d$. 
In this case, we define $\ell_\vM(\gamma)\index{l@$\ell_\vM(\gamma)$ the length of $\gamma$ in metric $X$}:=\sum a_i\ell_\vM(\gamma)$, where $\ell_\vM$ denotes the length function on $\vM$.
The multi-geodesic $\gamma$ will be called integral (resp.\ rational) if $a_i\in\bbz$ (resp.\ $a_i\in\bbq$).

Given a rational multi-geodesic $\gamma_0$ on $\vM$, define 
\[
s_\vM(\gamma_0, L)\index{s@$s_\vM(\gamma_0, L)$ number of simple closed curves}:=\#\{\gamma\in\MC.\gamma_0:\ell_\vM(\gamma)\leq L\}.
\]
Mirzakhani,~\cite[Thm.~1.1]{M-SimpleCl}, proved the following estimate when $M$ is a hyperbolic surface: 
\be\label{eq:simple-closed-geod}
s_M(\gamma_0, L)\sim n_{\gamma_0}(M)L^{6g-6},
\ee 
where $n_{\gamma_0}\index{n@$n_{\gamma_0}$ the Mirzakhani function}:\mathcal M(S)\to \bbr^+$ (the {\em Mirzakhani} function) is a continuous proper function; 
geometric informations carried by $n_{\gamma_0}$ are also studied in~\cite{M-SimpleCl}. 

In this paper we obtain a quatitative version of~\eqref{eq:simple-closed-geod}; 
moreover, our approach allows us to prove such a result in the more general setting of {\em variable} negative curvature.

\begin{theorem}\label{thm:main}
There exists some $\kappa=\kappa(g)>0$ so that the following holds.
Let $\vM$ be a compact surface of genus $g$ equipped with a Riemannian metric of negative curvature.   
Let $\gamma_0$ be a rational multi-geodesic on $\vM$. Then 
\[
s_\vM(\gamma_0,L)=n_{\gamma_0}(\vM)L^{6g-6}+O_{\gamma_0, \vM}(L^{6g-6-\kappa})
\]
where $n_{\gamma_0}(\vM)$ is a positive constant which depends on $\gamma_0$ and $\vM$. 
\end{theorem}

The proof of Theorem~\ref{thm:main} is based on the study of a related counting problem in the space of geodesic measured laminations on $S$, \`{a} la Mirzakhani.
The space of measured laminations on $S$, which we denote by $\mathcal{ML}(S)$\index{M@$\mathcal{ML}(S)$ the space of measured laminations on $S$}, is a piecewise linear integral manifold 
homeomorphic to ${\Bbb R}^{6g-6}$; but it does not have a natural differentiable structure,~\cite{Thurston:book:GTTM}. 
Train tracks were introduced by Thurston as a powerful technical device for understanding measured 
laminations. Roughly speaking, train tracks are induced by squeezing almost parallel
strands of a very long simple closed geodesic to simple arcs on a surface; they provide linear charts for $\ML$.

The mapping class group $\MC$ of $S$ acts naturally on $\mathcal{ML}(S)$. Moreover,
there is a natural $\MC$-invariant locally finite measure on $\mathcal{ML}(S)$, the
Thurston measure $\mu_{\rm Th}$\index{m@$\mu_{\rm Th}$ the Thurston measure}, given by the piecewise linear integral structure on $\mathcal{ML}(S)$,~\cite{Thurston:book:GTTM}.
For any open subset $U \subset\mathcal{ML}(S)$ and any $t>0$, we have
\[
\mu_{\rm Th}(tU)=t^{6g-6}\mu_{\rm Th}(U).
\]
On the other hand, any metric of negative curvature $\vM$ on $S$ induces the length function
$\lambda\mapsto\ell_\vM(\lambda)$ on $\mathcal{ML}(S)$, which satisfies $\ell_\vM(t\lambda)=t\ell_\vM(\lambda)$
for all $t>0$. It is proved in~\cite[App.~A]{M-Thesis} that $\ell_M$ is a convex function on $\mathcal{ML}(S)$ when $M$ is a hyperbolic surface. This fact remains valid in the more general setting of variable negative curvature, see~\S\ref{sec:add-ttc}. 

The source of the polynomially effective error term in Theorem~\ref{thm:main} 
is the exponential mixing property of the Teichm\"{u}ller geodesic flow proved by Avila, Gou\"{e}zel, and Yoccoz,~\cite{AGY-EM, AJ-EMQ, AG-Eigen}. 
We combine this estimate with ideas developed by Margulis in his PhD thesis,~\cite{Margulis-Thesis}, to prove the following theorem which is of independent interest --- see Theorem~\ref{prop:counting} for a more general statement. 

Let $\tau$ be a train track and let $U(\tau)$ be the corresponding train track chart.
For every $\lambda\in U(\tau)$ we let $\|\lambda\|_\tau$ denote the sum of the weights of $\lambda$ in $U(\tau)$, see~\S\ref{sec:tt}.

\begin{theorem}\label{thm:main-MLS}
There exists some $\consta\label{a:final-exp-mls}=\ref{a:final-exp-mls}(g)>0$ so the following holds. 
Let $\tau$ be a maximal train track. Let $L\geq1$ and let $\gamma_0$ be a simple close curve on $M$.
There exists a constant $c_{\gamma_0}>0$ so that  
\[
\#\{\gamma\in U(\tau)\cap\MC.\gamma_0: \|\gamma\|_\tau\leq L\}=c_{\gamma_0}{\rm vol}_\tau{L^{6g-6}}+ O_{\tau,\gamma_0}(L^{6g-6-\ref{a:final-exp-mls}})
\] 
where ${\rm vol}_\tau=\mu_{\rm Th}\{\lambda\in U(\tau):\|\lambda\|_\tau\leq 1\}$.
\end{theorem}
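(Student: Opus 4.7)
The plan is to convert the $\ML$-counting problem into an effective equidistribution statement for the Teichm\"uller geodesic flow $g_T$ on the stratum $\Amnfld$, and then invoke the exponential mixing theorem of Avila--Gou\"ezel--Yoccoz. The bridge is the Hubbard--Masur correspondence: if one fixes a unit-area $q_0\in\tAmn$ whose horizontal foliation is $\gamma_0$ (upon picking a transverse measured foliation $\nu_0$), then the orbit $\MC.\gamma_0\subset\ML$ lifts equivariantly to $\MC.q_0\subset\tAmn$. Since $g_T$ scales horizontal foliations by $e^T$, setting $T=\log L$ converts the condition $\|\mce\gamma_0\|_\tau\leq L$ into the condition that $g_{-T}\mce q_0$ lies in a precompact slab $W\subset\tAmn$ of quadratic differentials whose horizontal foliation is in $\ttc\cap\{\|\cdot\|_\tau\leq 1\}$ and whose vertical foliation lies in some bounded transversal $V$.

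The next step is a Margulis-style averaging. Let $\phi$ be a smooth bump supported in $W$ and form the automorphic sum $\Phi(q)=\sum_{\mce\in\MC}\phi(\mce^{-1}q)$, which descends to a function on $\Amnfld$. Let $\chi$ be a smooth cutoff concentrated in a small ball around the image of $q_0$ in $\Amnfld$. Then, up to smoothing errors coming from the transversal $V$ and the support of $\chi$, the count of $\MC.\gamma_0$ is proportional to
\[
I_T=\int_{\Amnfld}\Phi(g_{-T}q)\,\chi(q)\,d\mu(q),
\]
where $\mu$ denotes the Masur--Veech measure. The AGY exponential mixing theorem gives
\[
I_T=\mu(\Amnfld)^{-1}\!\int\Phi\,d\mu\cdot\!\int\chi\,d\mu\;+\;O\bigl(e^{-\delta T}\,\|\Phi\|_{k}\,\|\chi\|_{k}\bigr)
\]
for a suitable Sobolev (or anisotropic) norm. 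Unfolding yields $\int\Phi\,d\mu=\int_{\tAmn}\phi\,d\mu$, and the Masur--Veech disintegration $d\mu=d\muth\otimes d\mu_V\otimes dt$ in horizontal/vertical/flow-time coordinates factors this as $\muth(\ttc\cap\{\|\cdot\|_\tau\leq 1\})=\mathrm{vol}_\tau$ times constants depending on $\gamma_0$ and $V$. Pulling this through the averaging gives the leading term $c_{\gamma_0}\,\mathrm{vol}_\tau\,L^{6g-6}$.

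To extract the power saving, one balances the mixing error $e^{-\delta T}=L^{-\delta}$ against the smoothing losses. Taking $\phi$ and $\chi$ supported in boxes of scale $\eta$ gives Sobolev norms $\lesssim\eta^{-k}$ and introduces a smoothing error $O(L^{6g-6}\eta)$ in the count; optimizing $\eta$ produces the positive exponent $\ref{a:final-exp-mls}$. The chief obstacle is the non-compactness of $\Amnfld$: both $\Phi$ and $\chi$ need not be bounded and need not have finite Sobolev norm, because $\MC$-lifts of $q_0$ may spend arbitrarily long stretches in the cusp, where the geometry of $\ttc$ and the vertical transversal $V$ degenerates. One must therefore truncate the thin part at a height depending on $L$, bound the resulting cusp contribution via Eskin--Masur or Athreya-type recurrence estimates for Teichm\"uller flow, and verify that the test functions assembled from the train-track chart $\ttc$ and the transversal $V$ sit in the anisotropic norms for which the AGY estimate applies. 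Driving the truncation error down to a polynomial saving in $L$, uniformly in the ambient parameters, is the crux of the argument.
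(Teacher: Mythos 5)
The first paragraph mis-sets up the count: lifting $\gamma_0$ to a single quadratic differential $q_0$ and counting $\{\mce\in\MC: g_{-T}\mce q_0\in W\}$ for a \emph{precompact} slab $W\subset\tAmn$ gives a quantity that is $O(1)$, not $\asymp L^{6g-6}$. Indeed $g_{-T}$ commutes with the $\MC$-action, so $\{g_{-T}\mce q_0\}_{\mce}$ is a single $\MC$-orbit covering the single point $g_{-T}\pi(q_0)\in\Amnfld$; the number of lifts in a fixed compact $W$ is bounded uniformly in $T$. More concretely: while $g_{-T}$ shrinks the $\gamma_0$-direction of $\mce q_0$ to $e^{-T}\mce\gamma_0$, it simultaneously expands the transverse foliation to $e^{T}\mce\nu_0$, which escapes any fixed bounded transversal $V$ for all but a bounded number of $\mce$ --- this is not a ``smoothing error'' coming from $V$ but a loss of essentially the entire count. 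Accordingly the correlation integral $I_T=\int_{\Amnfld}\Phi(g_{-T}q)\chi(q)\,\dif\!\mu$ converges by mixing to the $T$-independent constant $\mu(\Phi)\mu(\chi)$, so it cannot produce the $L^{6g-6}$ leading term.

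The missing idea is that $\gamma_0$ must be lifted to the full unstable horosphere leaf $\tmlhoro(\tilde q_0)=\{\tilde p:\Ifrak(\tilde p)=\gamma_0\}$, not to a point. The condition $\|\mce\gamma_0\|_\tau\leq e^{t}$ with $\overline{\mce\gamma_0}\in\cube$ is equivalent to the translated leaf $\mce\cdot a_{t}\tmlhoro(\tilde q_0)$ meeting a fixed center-stable box $\PU_{\cube,\epsilon}$ (Lemma~\ref{lem:int-horo}); the object being counted is leaf intersections, and the growth $e^{ht}$ comes from the expansion of $\mu^{\unst}_{x}$ under $a_t$. Margulis' thickening is then applied along the horosphere: one integrates a bump $\psi^{\unst}$ over $W^{\unst}(x)$ against the conditional measure, thickens only in the stable and flow directions, and then invokes AGY mixing (Proposition~\ref{prop:thickening-smooth}, Lemma~\ref{lem:ncc}, Proposition~\ref{prop:ncc}). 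Reworking your correlation integral to encode this would in effect reconstruct that machinery. Finally, since $W^{\unst}(q_0)$ is non-compact and the leaf measure $\nu$ has full support there, controlling the contribution of trajectories that spend too long in the thin part needs the closed-geodesic bounds of Eskin--Mirzakhani--Rafi (Theorem~\ref{thm:excep-traj}), a sharper input than the Athreya/Eskin--Masur recurrence estimates you cite.
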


It is worth noting that in view of Theorem~\ref{thm:main-MLS}, the asymptotic behavior of the number of points in one $\MC$-orbit in 
the cone $\{\lambda:\|\lambda\|_\tau\leq L\}$ and that of the number of integral points in this cone agree up to multiplicative constant.

Theorem~\ref{thm:main-MLS}, in the more general form Theorem~\ref{prop:counting}, plays a crucial role in our analysis. Indeed, 
using the aforementioned convexity of the length function, we will prove Theorem~\ref{thm:main} using Theorem~\ref{prop:counting} in \S\ref{sec:proof-thm}.

It is an intriguing problem to investigate the asymptotic behavior of functions similar to and different from $s_\vM(\gamma_0,L)$ or the  complexity considered in Theorem~\ref{thm:main-MLS}. 
For instance, for a suitable formulation of a combinatorial length --- using intersection numbers --- the count is exactly a polynomial, see~\cite{FLP}. We also refer the reader to~\cite{CMP} where a related problem is studied for thrice punctured sphere.

\subsection{Outline of the paper}
In \S\ref{sec:notation} we collect some preliminary results. 
In \S\ref{sec:exp-mix} we prove an equidistribution result with an error term, Proposition~\ref{prop:thickening-smooth}, 
which may be of independent interest; see, e.g.~\cite{KM, LM-Horospherical}. 
The proof of this proposition is based on the exponential mixing rate for the Teichm\"{u}ller geodesic flow,~\cite{AGY-EM}, 
and the so called {\em thickening} technique, see~\cite{Margulis-Thesis, EMc}. 
In \S\ref{sec:mxing-counting} we prove Proposition~\ref{prop:ncc}; this proposition is one of the main ingredients in the proof, 
and could be compared to arguments in~\cite[Chap.~6]{Margulis-Thesis}. We will recall some basic facts about $\ML$, and study the relation between the linear structures on $\ML$ and the space of quadratic differentials in \S\ref{sec:tt} and \S\ref{sec:linear-ml-qt}.
The orbital counting in sectors of $\ML$ is studied in \S\ref{sec:int-pts}; the main result here is Theorem~\ref{prop:counting}.
We prove Theorem~\ref{thm:main} in \S\ref{sec:proof-thm}.

\subsection{Acknowledgement}
This project originated in fall of 2015 when the authors were members of the Institute for Advanced Study (IAS), we 
thank the IAS for its hospitality.  We thank C.~McMullen, K.~Rafi, and A.~Zorich for helpful discussions. 
We also thank F.~Arana-Herrera, H.~Oh, and A.~Wright for their comments on an earlier version of this paper.
We are in debt to G.~Margulis and F.~Arana-Herrera for drawing our attention to the case of variable negative curvature, and  
to K.~Rafi for providing the proof of Theorem~\ref{thm:int-convex}. 
Last, but not least, we thank the anonymous referee for their careful reading and several helpful comments.

\section{Preliminaries and notation}\label{sec:notation}
Let $\mathcal {Q}(S)$\index{Q@$\mathcal Q(S)$ the moduli space of quadratic differentials} denote the moduli space of quadratic differentials 
on $S$, and let $\QMS$\index{Q@$\QMS$ the moduli space of area one quadratic differentials} be the moduli space of quadratic differentials with area one on $S$. 
For any $\alpha = (\alpha_1,\dots, \alpha_\noz,\varsigma)$\index{a@$\alpha$ multiplicities of zeros} with $\sum\alpha_i=4g-4$
and $\varsigma\in\{\pm1\}$, define $\mathcal Q_1(\alpha)$\index{Q@$\mathcal Q_1(\alpha)$ a stratum of area one quadratic differentials} to be (a connected component) of the stratum of quadratic differentials 
consisting of pairs $(M,q)$ where $M\in\mathcal M(S)$ 
and $q$ is a unit area quadratic differential on $M$ 
whose zeros have multiplicities $\alpha_1,\dots, \alpha_\noz$
and $\varsigma=1$ if $q$ is the quare of an abelian differential and $-1$ otherwise. 
Then
\[
\QMS=\bigsqcup_\alpha\mathcal Q_1(\alpha).
\]
Put $\mathcal Q(\alpha)\index{Q@$\mathcal Q(\alpha)$ a stratum of quadratic differentials}:=\{tq: t\in \reals, q\in\mathcal Q_1(\alpha)\}$. 
Let $\Sigma\subset S$ be a set of $k$ distinct marked points. 
Let $\tAmn\index{Q@$\tAmn$ the space of marked surfaces}$ denote the space of 
quadratic differentials $(M, q)$ equipped with an equivalence class
of homeomorphisms $f: S\to M$ that send the marked points to the zeros of $q$. The equivalence relation is isotopy rel marked points.
Let $\pi\index{p@$\pi$ the covering map}:\tAmn\to\mathcal Q_1(\alpha)$ be the forgetful map which forgets the marking $f$; this is an infinite degree branched covering.  

Similarly, let $\Omega(S)$\index{H@$\Omega(S)$ the moduli space of Abelian differentials} denote the moduli space of Abelian differentials on $S$, and let $\Omega_1(S)$\index{H@$\Omega_1(S)$ the moduli space of area one Abelian differentials} be the moduli space of area one Abelian differentials.
For any $\alpha=(\alpha_1,\ldots,\alpha_\noz)$, we let $\mathcal H(\alpha)$\index{H@$\mathcal H(\alpha)$ a stratum of Abelian differentials} denote the corresponding stratum,
and let $\Halpha$\index{H@$\mathcal H(\alpha)$ a stratum of area one Abelian differentials} denote the area one abelian differentials. 

Note that passing to a branched double cover $\hat M$\index{M@$\hat M$ orienting double cover of $M$} of $M$,
we may realize $\mathcal Q_1(\alpha)$ as an {\em affine invariant submanifold} in $\mathcal H_1(\hat\alpha)$\index{a@$\hat \alpha$ orienting abelian differential}
corresponding to odd cohomology classes on $\hat M$, see \S\ref{sec:period-piece-ab}.
However, even if $q$ belongs to a compact subset of $\QMS$, 
the complex structure on $\hat M$ may have very short closed curves in the hyperbolic metric, e.g.\ a short saddle connection between two distinct zeros on $(M,q)$ could lift to a short loop in $\hat M$.
Note however that if $(\hat M,\omega)$ is the aforementioned double cover of $(M,q)$, then 
the length of the shortest saddle connection in $\omega$ is bounded by the length of the shortest saddle connection in $q$, i.e.\
compact subsets of $\mathcal Q_1(\alpha)$ lift to compact subsets of $\mathcal H_1(\hat\alpha)$.

\subsection{Period coordinates}\label{sec:period-piece-ab}
Let $x=(M,\omega)\in \mathcal H(\alpha)$, and let $\Sigma\subset M$ be the set of
zeros of $\omega$. Passing to a finite cover, which we continue to denote by $\mathcal H(\alpha)$, 
we assume there are no orbifold points in $\mathcal H(\alpha)$. Define the period map
\[
\Phi\index{F@$\Phi$ the period map}:\mathcal H(\alpha)\to H^1(M,\Sigma,\cx).
\] 
Let us recall that $\Phi$ can be defined as follows. 
Let $\#\Sigma=k$. 
Fix a triangulation $T$ of the surface by saddle connections of $x$, that is: $2g+\noz-1$ 
directed edges $\delta_1,\ldots,\delta_{2g+\noz-1}$ which form a basis for $H_1(M,\Sigma,\zed)$. 
Define
\[
\Phi(x)=\Bigl(\int_{\delta_i}\omega\Bigr)_{i=1}^{2g+\noz-1}.
\]
Note that this map depends on the triangulation $T$. If $T'$
is any other triangulation, and $\Phi'$ is the corresponding period map,
then $\Phi'\circ\Phi^{-1}$ is linear.
For any $x\in\mathcal H(\alpha)$, there is a neighborhood $\mathsf B(x)$ of $x$ 
so that the restriction of $\Phi$ to $\mathsf B(x)$ is a homeomorphism onto $\Phi(\mathsf B(x))$,
see \S\ref{sec:period-box}.
We always choose $\mathsf B(x)$ small enough so that, using the Gauss-Manin connection, 
the triangulation at $y\in\mathsf B(x)$ can be identified with the triangulation at $x$. 

We define the period coordinates at $x=(M,q)\in\mathcal Q(\alpha)$ as follows.
If $\varsigma=1$, then $q$ is a square of an abelian differential, and we may define period coordinates as above.  
If $\varsigma=-1$, we use the orienting double cover $\mathcal H(\hat\alpha)$ to define the period coordinates:
in this case there is a canonical injection from $\mathcal Q(\alpha)$ into $\mathcal H(\hat\alpha)$. 
Any Riemann surface in the image of this map is equipped with an involution. This way we get the period map from $\mathcal Q(\alpha)$
to $H_{\rm odd}^1(M,\Sigma,\cx)$ --- the anti-invariant subspace of the cohomology for the involution.

Put $h\index{h@$h$ the topological entropy of Teichm\"{u}ller geodesic flow}:=2g+\noz-2$ if $\varsigma=1$ 
and $h:=2g+\noz-3$ if $\varsigma=-1$; 
the number $h$ is the topological entropy of the Teichm\"{u}ller geodesic flow on $\mathcal Q_1(\alpha)$.



\subsection{$\SL(2,\reals)$-action on $\mathcal H_1(\alpha)$}\label{sec:sl2r-action}
Let $x\in\mathcal H_1(\alpha)$,
we write $\Phi(x)$ as a $2 \cross n$ matrix. The action of $g =
\begin{pmatrix} a & b \\ c & d \end{pmatrix} \in
\SL(2,\reals)$ in these coordinates is linear. 
We choose a fundamental domain for the action of 
the mapping class group and
think of the dynamics on the fundamental domain. Then, the
$\SL(2,\reals)$-action becomes
\begin{displaymath}
\begin{pmatrix} x_1 & \dots & x_n \\ y_1 & \dots & y_n \end{pmatrix}
\mapsto \begin{pmatrix} a & b \\ c & d \end{pmatrix} \begin{pmatrix} x_1 & \dots & x_n \\ y_1 & \dots & y_n
\end{pmatrix} A(g,x)\index{a@$A(g,x)$ the Kontsevich-Zorich cocycle},
\end{displaymath}
where $A(g,x) \in \operatorname{Sp}(2g,\zed) \ltimes \zed^{\noz-1}$
is the {\em Kontsevich-Zorich 
cocycle}. That is: $A(g,x)$ is the change of basis one needs to perform to return the
point $gx$ to the fundamental domain. It can be interpreted as the
monodromy of the Gauss-Manin connection restricted to the orbit of
$\SL(2,\reals)$.  

In the sequel, we let $a_t = \begin{pmatrix} e^t & 0 \\ 0 & e^{-t}  \end{pmatrix}$,
$u_t = \begin{pmatrix} 1 & t \\ 0 & 1  \end{pmatrix}$, and $\bar u_t = \begin{pmatrix} 1 & 0 \\ t & 1  \end{pmatrix}$.

We have the following.

\begin{theorem}[Veech-Masur]\label{thm:VM}
The space $\mathcal H_1(\alpha)$ carries a natural measure $\mu$\index{m@$\mu$ affine $\SL(2,\reals)$-invariant measure}
in the Lebesgue measure class such that
\begin{enumerate}
\item $\mathcal H_1(\alpha)$ has finite measure,
\item $\mu$ is $\SL(2,\reals)$-invariant and ergodic.
\end{enumerate}
\end{theorem}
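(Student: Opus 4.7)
The plan is to construct $\mu$ explicitly in period coordinates and then verify the three properties in turn.

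For the construction, choose a $\bbz$-basis $\delta_1,\ldots,\delta_{2g+\noz-1}$ of $H_1(M,\Sigma;\bbz)$. The period map $\Phi$ of \S\ref{sec:period-piece-ab} then identifies each period box $\mathsf B(x)\subset\mathcal H(\alpha)$ with an open subset of $\bbc^{2g+\noz-1}$, and pulls back the standard Lebesgue measure to a measure $\nu$ on $\mathsf B(x)$. Transition functions between overlapping charts lie in $\GL(2g+\noz-1,\bbz)$ (they are induced by changes of integral basis of $H_1(M,\Sigma;\bbz)$), hence preserve $\nu$ up to sign, so $\nu$ globalizes to a well-defined mapping-class-group invariant Lebesgue-class measure on $\mathcal H(\alpha)$. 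The area function $A(M,\omega)=\tfrac{i}{2}\int_M\omega\wedge\bar\omega$ is a positive-definite Hermitian form in the periods, so $\mathcal H_1(\alpha)=A^{-1}(1)$ is a smooth codimension-one real submanifold. Define $\mu$ on $\mathcal H_1(\alpha)$ as the associated cone measure, i.e.\ under the radial decomposition $r=A^{1/2}$ one has $d\nu = r^{2h+1}\,dr\,d\mu$.

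Invariance is then immediate: in period coordinates the $\SL(2,\bbr)$-action is given by left matrix multiplication on the $2\times(2g+\noz-1)$ matrix of periods described in \S\ref{sec:sl2r-action}, has unit Jacobian (one factor of $\det g=1$ per column), and therefore preserves $\nu$; it also preserves the area $A$, and so descends to an $\SL(2,\bbr)$-invariant measure on $\mathcal H_1(\alpha)$ equal to $\mu$.

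The main obstacle is the finiteness assertion $\mu(\mathcal H_1(\alpha))<\infty$. Following Masur and Veech, one exhausts $\mathcal H_1(\alpha)$ by the $\epsilon$-thick sets $\mathcal K_\epsilon=\{(M,\omega):\text{shortest saddle connection of }\omega\text{ has length }\ge\epsilon\}$, each of which is compact by a Mumford-type criterion (realized concretely via the saddle-connection triangulations of \S\ref{sec:period-piece-ab}) and hence of finite $\mu$-measure. The crux is to show $\mu(\mathcal H_1(\alpha)\smallsetminus\mathcal K_\epsilon)\to 0$ as $\epsilon\to 0$. On the thin part, some basis element $\delta_i$ has small holonomy $z=\hol(\delta_i)$, so the complement of $\mathcal K_\epsilon$ is covered by finitely many coordinate neighborhoods in which $|z|\le\epsilon$. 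Integrating out $z$ over this disk in $\bbc$ against the remaining period coordinates (which vary in a bounded set because of the area-one constraint) contributes a factor of order $\epsilon^{2}$ to the cone measure. A combinatorial argument bounding the number of degeneration types, together with this quantitative estimate, yields the required decay.

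For ergodicity, it suffices by the Mautner phenomenon to prove ergodicity of the Teichm\"uller geodesic flow $a_t$, since $a_t$ together with the horocycle subgroups $\{u_t\}$ and $\{\bar u_t\}$ generate $\SL(2,\bbr)$. In period coordinates $u_t$ and $\bar u_t$ act affinely, and their orbits foliate $\mathcal H_1(\alpha)$ by smooth unstable and stable leaves on which the conditional measures of $\mu$ are Lebesgue. A Hopf-type argument then shows that any $a_t$-invariant function in $L^2(\mu)$ is constant along both foliations, hence constant almost everywhere, so $a_t$ and therefore the entire $\SL(2,\bbr)$-action is ergodic.
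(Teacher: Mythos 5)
The paper does not prove this statement; Theorem~\ref{thm:VM} is quoted without proof as the foundational result of Masur and Veech, so I will compare your sketch against the standard arguments in the literature.

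Your construction of $\mu$ via period coordinates and the invariance argument (unit Jacobian of the linear $\SL(2,\reals)$-action on the period matrix) are correct and essentially the standard ones. However, the finiteness argument has a genuine gap, and it is precisely at the crux of the Masur--Veech theorem. You write that $A(M,\omega)=\tfrac{i}{2}\int_M\omega\wedge\bar\omega$ is a positive-definite Hermitian form in the periods, and you use this to claim that on the slice $\{A=1\}$ the non-short period coordinates ``vary in a bounded set.'' Both of these claims are false. On $H^1(M,\bbc)$ the Hodge pairing $\tfrac{i}{2}\int c\wedge\bar c$ has signature $(g,g)$ (it is positive on $H^{1,0}$ and negative on $H^{0,1}$); its pullback to $H^1(M,\Sigma,\bbc)$ under the period map is likewise indefinite. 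Positivity of the area is a property of the \emph{image} of the period map --- which is a proper, non-linear, non-closed open subset of the chart --- not of the ambient quadratic form. Consequently the level set $\{A=1\}$, intersected with a coordinate chart, is unbounded: the stratum $\mathcal{H}_1(\alpha)$ is non-compact precisely because the remaining periods can blow up even at fixed area. This kills your ``integrating out the short period $z$ over a disk contributes $\epsilon^2$, and the rest is bounded'' step. The actual Masur--Veech finiteness proof has to grapple with this head-on: the known arguments either (i) run through a careful Rauzy--Veech induction analysis on interval exchange transformations, or (ii) (the Masur--Smillie type estimate) decompose the thin part according to the combinatorics of short saddle-connection configurations, use a covering by period boxes with controlled distortion, and sum over degeneration types --- and the estimate one actually proves is that the volume of a neighborhood of a boundary stratum of complex codimension $j$ is $O(\epsilon^{2j})$, which requires explicit bookkeeping of how short saddle connections constrain the period matrix. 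None of this follows from the area constraint alone.

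A secondary, smaller point: your Hopf-argument sketch for ergodicity of $a_t$ needs a recurrence statement (Poincar\'e recurrence applied to a compact exhaustion, or the non-divergence of $a_t$-generic orbits) to glue orbit segments together; you implicitly use this without flagging it. The absolute continuity of the stable/unstable holonomy is automatic here because the foliations are linear in period coordinates and $\mu$ is Lebesgue, so that part is fine. Note also that the historical proofs of ergodicity (Masur, Veech 1982) went through interval exchange transformations rather than the Hopf argument; the Hopf-type approach you describe is valid but was developed later.
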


More generally, for any affine invariant manifold, $\mathcal M\index{M@$\mathcal M$ affine invariant manifold}\subset\Halpha$,
we let $\mu$ denote the $\SL(2,\reals)$-invariant affine measure on $\mathcal M$.
In particular, all the strata in $\mathcal Q_1(S)$ are equipped with such invariant measures.

\subsection{Mapping class group action}\label{sec:mpc-action}
We denote elements in $\MC$ using bold letters, e.g., $\mce$ denotes an element in $\MC$. 
The action of $\MC$ on $\tAmn$ commutes with the action of $\SL(2,\reals)$, we will however denote both these actions as left action and write, e.g.\  $\mce\cdot\tilde x$ or simply denoted by $\mce\tilde x$.

\subsection{The constants}\label{sec:constants}
In the sequel we will use $\kappa_\bullet$ and $N_\bullet$, $\bullet=1,2,\ldots$ to denote various constants. 
Unless it is explicitly mentioned otherwise, these constants are allowed only to depend on the genus. 
The constants $\kappa_\bullet$ are meant to indicate small positive numbers
while $N_\bullet$ are used for constants which are expected to be $>1$. 

We will also use the notation $A \ll B$. This expression means: there exists a constant $c>0$ so that $A \leq cB$; the implicit constant $c$ is permitted to depend on the genus, but (unless otherwise noted) not on anything else. We write~$A\asymp B$ if~$A\ll B\ll A$.
If a constant (implicit or explicit) depends on another parameter others than the genus, 
we will make this clear by writing, e.g.~$\ll_\epsilon$,~$C(x)$, etc.

We also adopt the following $\star$-notation. 
We write $B=A^{\pm\star}$ if $B=A^{\pm c}$ where $c>0$ depends only on the genus.
Similarly, one defines $B\ll A^\star$, $B\gg A^\star$.
Finally, we also write~$A\asymp B^\star$ if~$A^\star\ll B\ll A^\star$ (possibly with different exponents).

\subsection{Modified Hodge norm}\label{sec:hodge-norm}

Let $M$ be a Riemann surface. By definition, $M$ has a complex structure. 
Let $\cH_M$ denote the set of holomorphic $1$-forms on
$M$. One can define the {\em Hodge inner product} on $\cH_M$ by
\begin{displaymath}
\langle \omega, \eta \rangle = \frac{{\bf i}}{2} \int_M \omega \wedge \bar{\eta}.
\end{displaymath}
We have a natural map $r: H^1(M,\reals) \to \cH_M$ which sends
a cohomology class $c \in H^1(M,\reals)$ to the 
holomorphic $1$-form $r( c) \in \cH_M$ such that 
the real part of $r( c)$ (which is a harmonic $1$-form)
represents $c$. We can thus define the Hodge inner product on
$H^1(M,\reals)$ by $\langle c_1, c_2 \rangle = \langle
r(c_1), r(c_2) \rangle$. Then
\begin{displaymath}
\langle c_1, c_2 \rangle = \int_M c_1 \wedge *c_2,
\end{displaymath}
where $*$ denotes the Hodge star operator and we choose harmonic representatives 
of $c_1$ and $*c_2$ to evaluate the integral.
We denote the associated norm by $\| \cdot \|_M$. 
This is the {\em Hodge norm}, see \cite{FarkasKra}.

If $x = (M,\omega) \in \cH_1(\alpha)$, we will often write $\| \cdot
\|_{{\rm H},x}$\index{n@$\norm{\;}_{{\rm H},x}$ the Hodge norm at $x$} to denote the Hodge norm $\| \cdot \|_M$ on
$\hr$. Since $\| \cdot \|_{{\rm H},x}$ depends only on $M$, we have 
$\|c\|_{{\rm H},kx} = \|c\|_{{\rm H},x}$ for all $c \in \hr$ and all $k\in{\rm SO}(2)$.

Let $E(x)=\mbox{span}\{[{\rm Re}(\omega)],[{\rm Im}(\omega)]\}$ --- the space $E(x)$ is often referred to as the {\em standard space}. 
We let 
\be\label{eq:the map p}
p: \rhr\rightarrow\hr
\ee 
denote the natural projection; $p$ defines an isomorphism between $E(x)$ and 
$p(E(x))\subset\hr$. 

For our applications in the sequel (and in order to account for the {\em loss} of hyperbolicity in the thin part of the moduli space)
we need to consider a modification of the Hodge norm.

\subsection*{The classes $c_\alpha$ and $\ast c_\alpha$}
Let $\alpha$ be a homology class in $H_1(M,\reals)$. We let $*c_{\alpha} \in H^1(M,\reals)$ be the
cohomology class so that  
\begin{displaymath}
\int_\alpha \omega = \int_M \omega \wedge *c_\alpha 
\end{displaymath}
for all $\omega \in H^1(M, \reals)$. Then, 
\begin{displaymath}
\int_M *c_\alpha \wedge *c_\beta = \intsc(\alpha,\beta)\index{i@$\intsc(\,,\,)$ algebraic intersection pairing},
\end{displaymath}
where $\intsc(\cdot, \cdot)$ denotes the algebraic intersection number.
Let $\ast\index{s@$\ast$ the Hodge star operator}$ denote the Hodge star operator, and let 
\begin{displaymath}
c_\alpha = \ast^{-1}(*c_\alpha). 
\end{displaymath}
Then, for any $\omega \in H^1(M,\reals)$ we have
\begin{displaymath}
\langle \omega, c_\alpha \rangle = \int_M \omega \wedge *c_\alpha = 
\int_\alpha \omega,
\end{displaymath}
where $\langle \cdot, \cdot \rangle$ is the Hodge inner product. 
We note that $*c_\alpha$ is a purely topological construction which 
depends only on $\alpha$, but $c_\alpha$
depends also on the complex structure of $M$.

Fix $\epsilon_* > 0$ (the \emph{Margulis constant}) so that any two
geodesics of hyperbolic length less than $\epsilon_*$ must be disjoint. 

{}{
Let $\sigma$ denote the hyperbolic metric in the conformal
class of $M$. For any closed curve $\alpha$ on $M$, let $\ell_M(\alpha)$
denote the length of the geodesic representative of 
$\alpha$ in the metric $\sigma$.} 

We recall the following.
\begin{theorem}\cite[Thm.~3.1]{ABEM}
\label{theorem:hodge:hyperbolic}
For any constant $L > 1$ there exists a constant $c > 1$, 
such that for any simple closed curve $\alpha$ with $\ell_M(\alpha) < L$, we have
\begin{equation}
\label{eq:hodge:hyperbolic}
\frac{1}{c} \ell_M(\alpha)^{1/2} \le \| c_\alpha \|_M <
c \,\ell_M(\alpha)^{1/2}.
\end{equation}
Furthermore, if $\ell_M(\alpha) < \epsilon_*$ 
and $\beta$ is the
shortest simple closed curve crossing $\alpha$, then
\begin{displaymath}
\frac{1}{c} \ell_M(\alpha)^{-1/2} \le \| c_\beta \|_M <
c \, \ell_M(\alpha)^{-1/2}.
\end{displaymath}
\end{theorem}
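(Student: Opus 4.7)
The plan is to reduce both halves of Theorem~\ref{theorem:hodge:hyperbolic} to classical estimates comparing extremal length with hyperbolic length, using the fundamental identity $\|c_\alpha\|_M^2 = \Ext_M(\alpha)$ for any simple closed curve $\alpha$. I would establish this identity first: by the definition of $c_\alpha$, $\|c_\alpha\|_M^2 = \langle c_\alpha, c_\alpha\rangle = \int_\alpha c_\alpha$, so $c_\alpha/\|c_\alpha\|_M^2$ is a closed $1$-form with $\int_\alpha \omega = 1$. A standard variational argument, rooted in the Riesz representation theorem, shows it is the unique Hodge-norm minimizer among all such forms, and by the classical theorem of Accola--Blatter this minimum Hodge norm equals $\Ext_M(\alpha)^{-1/2}$, giving the identity.

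For~\eqref{eq:hodge:hyperbolic} it then suffices to show $\Ext_M(\alpha) \asymp \ell_M(\alpha)$ uniformly for $\ell_M(\alpha) \le L$. The lower bound $\Ext_M(\alpha) \gtrsim \ell_M(\alpha)$ is Maskit's inequality. For the matching upper bound, when $\ell_M(\alpha) < \epsilon_*$ the Margulis collar around $\alpha$ is conformally an annulus of modulus $\asymp 1/\ell_M(\alpha)$, and the extremal length of the core of an annulus equals the reciprocal modulus, giving $\Ext_M(\alpha) \lesssim \ell_M(\alpha)$; when $\epsilon_* \le \ell_M(\alpha) \le L$, both quantities are bounded above and below by constants depending only on $L$ by compactness in the relevant portion of moduli space.

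For the statement about $c_\beta$, I would get the lower bound from Cauchy--Schwarz applied to the Hodge inner product. Since $\beta$ is the shortest simple closed curve crossing $\alpha$, a standard surgery argument on intersection points of $\beta$ with $\alpha$ forces $\intsc(\alpha,\beta) = \pm 1$; then $\langle \ast c_\alpha, c_\beta\rangle_M = \int_M \ast c_\alpha \wedge \ast c_\beta = \intsc(\alpha,\beta) = \pm 1$, and since $\ast$ is an isometry of $H^1(M,\reals)$, Cauchy--Schwarz yields $\|c_\beta\|_M \ge 1/\|c_\alpha\|_M \gtrsim \ell_M(\alpha)^{-1/2}$ by the first part. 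For the upper bound I would use $\|c_\beta\|_M^2 = \Ext_M(\beta)$ and establish $\Ext_M(\beta) \lesssim 1/\ell_M(\alpha)$ by a length--area argument: for any conformal metric $\rho$ on $M$, averaging the $\rho$-lengths of the transverse arcs in the flat cylinder model of the collar and applying Cauchy--Schwarz produces a crossing arc of $\rho$-length at most $(\area_\rho(\text{collar})/\ell_M(\alpha))^{1/2}$, which, combined with a fixed bounded-modulus completion outside the collar, yields a representative of $\beta$ with squared $\rho$-length $\lesssim \area(\rho)/\ell_M(\alpha)$.

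The main obstacle is the upper bound in the second statement: unlike the lower bound, it demands a uniform length--area inequality valid for \emph{every} conformal metric on $M$, and the contribution from the portion of $\beta$ outside the collar must be controlled so as not to dominate the $\ell_M(\alpha)^{-1}$ term. Everything ultimately rests on the standard conformal description of the Margulis collar together with classical extremal length machinery; once those tools are in hand, the proof assembles cleanly from the two identifications $\|c_\alpha\|_M^2 = \Ext_M(\alpha)$ and $\|c_\beta\|_M^2 = \Ext_M(\beta)$.
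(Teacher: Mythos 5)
This theorem is not proved in the paper --- it is quoted verbatim from \cite{ABEM} (Theorem~3.1 there), so there is no internal proof to compare your outline against. Your route (pass through extremal length, then compare extremal length to hyperbolic length via Maskit) is a reasonable reconstruction and matches what the paper itself records at the level of the estimate $\Ext_\gamma(x)^{1/2} \asymp \|{*}c_\gamma\|_{{\rm H},x}$, but three points deserve attention.

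\textbf{The ``identity'' $\|c_\alpha\|_M^2 = \Ext_M(\alpha)$ is not an identity.} The Accola--Blatter theorem relates $\|c_\alpha\|_M^{-2}$ to the extremal length of the \emph{homology} class of $\alpha$ (the family of all $1$-cycles homologous to $\alpha$), not to $\Ext_M(\alpha)$, the extremal length of the free-homotopy class. Since the homotopy class is a subfamily of the homology class, monotonicity of extremal length gives only the one-sided inequality $\|c_\alpha\|_M^2 \le \Ext_M(\alpha)$, which yields the \emph{upper} bound in~\eqref{eq:hodge:hyperbolic}; the lower bound does not follow from your stated identity. To complete that direction you need either a separate Cauchy--Schwarz/test-form argument in the collar ($\|c_\alpha\|_M \ge |\langle\omega,c_\alpha\rangle|/\|\omega\|_M = 1/\|\omega\|_M$ for a closed $\omega$ with $\int_\alpha\omega = 1$ concentrated in the collar, whose Hodge norm squared is $\asymp 1/\ell_M(\alpha)$), or a justification that the homological and homotopic extremal lengths agree up to bounded ratio for short non-separating curves. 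The proof in \cite{ABEM} goes directly through the collar estimate and avoids the homology-versus-homotopy issue entirely.

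\textbf{The non-separating hypothesis is doing all the work.} If $\alpha$ is separating, then $\alpha$ is null-homologous and $*c_\alpha = 0$, so $\|c_\alpha\|_M = 0$ while $\ell_M(\alpha) > 0$, and both inequalities in the theorem fail. The statement must be read with $\alpha$ non-separating, as is automatic when it is applied in the paper (the $\alpha_i$ in a short symplectic basis satisfy $i(\alpha_i,\beta_i) = 1$). Your proposal never records this hypothesis, and the Accola--Blatter step quietly fails without it (the homological extremal length becomes infinite).

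\textbf{The step $\intsc(\alpha,\beta) = \pm 1$ needs a length argument, not just surgery.} Surgery alone does not show that the \emph{shortest} curve crossing $\alpha$ meets it once. What you need is: $\alpha$ non-separating guarantees that curves with $i(\alpha,\cdot)=1$ exist, and since any curve crossing $\alpha$ must traverse the full Margulis collar once per geometric intersection, a curve with $i(\alpha,\beta)\ge 2$ is strictly longer than the shortest curve with $i(\alpha,\beta)=1$ for $\ell_M(\alpha)$ below the Margulis constant. Only then does $\intsc(\alpha,\beta)=\pm 1$ follow. Once that is in place, the Cauchy--Schwarz lower bound for $\|c_\beta\|_M$ and the length--area upper bound for $\Ext_M(\beta)$ proceed as you describe, with the caveat again that $\|c_\beta\|_M^2 = \Ext_M(\beta)$ should be $\|c_\beta\|_M^2 \le \Ext_M(\beta)$.
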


\subsection*{Short bases} Suppose $(M,\omega) \in \strat1$. Fix $\epsilon_1
< \epsilon_*$ 
and let $\alpha_1, \dots, \alpha_k$ be the curves with
hyperbolic length less than $\epsilon_1$
on $M$. For every $1 \le i \le k$,
let $\beta_i$ be the shortest curve in the flat metric defined by
$\omega$ with $i(\alpha_i, \beta_i) =1$. We can pick simple closed
curves $\gamma_r$, $1 \le r \le 2g-2k$ on $M$ so that the hyperbolic
length of each $\gamma_r$ is bounded by a constant $L$ depending only
on the genus, and so that the $\alpha_j$, $\beta_j$ and $\gamma_j$ form
a symplectic basis $\cS$ for $H_1(M,\reals)$. We will call such a
basis {\em short.} {}{A short basis is not unique, and
in the following we fix some measurable choice of a short basis at
each point of $\cH_1(\alpha)$. }

We recall the definition of a modified Hodge norm from~\cite{EMM-Isolation}; 
this is similar (but not the same) to the one defined in~\cite{ABEM}. 
The modified norm is defined on the tangent space to the
space of pairs $(M,\omega)$ where $M$ is a Riemann surface and
$\omega$ is a holomorphic $1$-form on $M$. Unlike the Hodge norm, 
the modified Hodge norm will
depend not only on the complex structure on $M$ but also on the choice
of a holomorphic $1$-form $\omega$ on $M$. Let $\{\alpha_i, \beta_i,
\gamma_r\}_{1 \le i \le k, 1 \le r \le 2g-2k}$ 
be a short basis for {}{$x=(M,\omega)$.}

We can write any $\theta \in H^1(M,\reals)$ as 
\begin{equation}
\label{eq:expand:in:basis}
\theta = \sum_{i=1}^k a_i (*c_{\alpha_i}) + \sum_{i=1}^k b_i
\ell_{\alpha_i}(\sigma)^{1/2} (*c_{\beta_i})  + \sum_{r=1}^{2g -2k }
u_i (*c_{\gamma_r}),
\end{equation}
We then define
\begin{equation}
\label{eq:def:modified:hodge:norm}
 \|\theta\|_x'' = \|\theta\|_{{\rm H},x} +  \left( \sum_{i=1}^k |a_i| +
   \sum_{i=1}^k |b_i| +
    \sum_{r=1}^{2g -2k} |u_r| \right).
\end{equation}
Note that $\| \cdot \|''$ depends on the choice of
a short basis; however, switching to a different short basis can change
  $\| \cdot \|''$ by at most a fixed multiplicative constant depending
  only on the genus. 

From (\ref{eq:def:modified:hodge:norm}) we have: for $1 \le i \le k$, 
\begin{equation}
\label{eq:star:c:alpha:prime:norm}
\|*\!c_{\alpha_i}\|_x'' \asymp 1, 
\end{equation}
see~\S\ref{sec:constants} for the notation $\asymp$. 
Similarly, we have 
\begin{equation}
\label{eq:star:c:beta:prime:norm}
\|*\!c_{\beta_i}\|_x'' \asymp  \| *\!c_{\beta_i} \|_{{\rm H},x} \asymp
\frac{1}{\ell_{M}(\alpha_i)^{1/2}}. 
\end{equation}
In addition, in view of Theorem~\ref{theorem:hodge:hyperbolic}, 
if $\gamma$ is any other moderate length curve on $M$, 
$\|*\!c_\gamma\|_x'' \asymp \|*\!c_\gamma \|_{{\rm H},x} = O(1)$. 
Thus, if $\cB$ is a short basis at {}{$x=(M,\omega)$}, then 
for any $\gamma \in \cB$, 
\begin{equation}
\label{eq:short:basis:extremal:length}
\Ext_\gamma({}{x})^{1/2} \asymp \|\!*\!c_\gamma\|_{{\rm H},x} \le
\|\!*\!c_\gamma\|'' \end{equation}
By $\Ext_\gamma({}{x})\index{e@$\Ext$ the extremal length}$ we mean the
extremal length of $\gamma$ in $M$, where $x =(M,\omega)$.

\bold{Remark.} From the construction, we see that the modified Hodge
norm is greater than the Hodge norm. Also, if the flat length of 
shortest curve in
the flat metric defined by $\omega$ is greater than $\epsilon_1$, then
for any cohomology class $c$, for some $N$ depending on
$\epsilon_1$ and the 
genus, 
\begin{equation}
\label{eq:modified:hodge:compare:to:hodge}
\|c\|'' \le N \|c\|_{{\rm H},x};
\end{equation}
i.e., the modified Hodge norm is within a multiplicative constant of
the Hodge norm. 
 
Note however that for a fixed absolute
cohomology class $c$, $\| c \|''_x$ is not a continuous
function of $x$, as $x$ varies in a Teichm\"uller disk; this is due to the
dependence on the choice of a short basis. To remedy this, we pick a
positive, continuous, ${\rm SO}(2)$-bi-invariant function $\phi$ on
$\SL(2,\reals)$ which is supported on a neighborhood of the identity with $\int_{\SL(2,\reals)} \phi(g) \, dg = 1$, and define
\begin{displaymath}
\|c\|'_x = \|c\|_{{\rm H},x} + \int_{\SL(2,\reals)} \|c\|''_{g
  x} \, \phi(g) \, dg.  
\end{displaymath}
It follows from~\cite[Lemma 7.4]{EMM-Isolation}
that for a fixed $c$, $\log \|c\|'_x$ is uniformly
continuous as $x$ varies in a Teichm\"uller disk. In fact, there is a
constant $m_0$ such that for all $x \in \strat1$, all $c \in
H^1(M,\reals)$ and all $t > 0$, 
\begin{equation}
\label{eq:log:abs:unif:cts}
e^{-m_0 t} \|c\|'_x  \le \|c\|'_{a_t x} \le e^{m_0 t}\|c\|'_x .
\end{equation}

\begin{remark} 
Even though $\|\cdot\|_x'$ is uniformly continuous as
long as $x$ varies in a Teichm\"uller disk, it may be only measurable
in general (because of the choice of short basis). 
\end{remark}

\subsection{Relative cohomology}
\label{sec:subsec:relative}
For $c \in \rhr$ and $x =(M,\omega)\in \strat1$, let $\gp_x(c)\index{p@ $\gp_x(c)$}$ 
denote the harmonic representative of
$p(c)$, where $p: \rhr \to \hr\index{p@ $p$ the natural map from $\rhr \to \hr$}$ is the natural map. We view $\gp_x(c)$
as an element of $\rhr$. Then, (similarly to~\cite[\S7]{EMM-Isolation}, see also \cite{ABEM} and \cite{EMR-Counting}) we define the modified Hodge norm $\|\;\|'$ on $\rhr$ as follows.
\begin{displaymath}
\|c\|'_x = \| p(c)\|'_x +
\sum_{(z,z') \in \Sigma\cross \Sigma} 
\left|\int_{\gamma_{z,z'}} (c - \gp_x(c))\right|,
\end{displaymath}
where $\gamma_{z,z'}$ is any path connecting the zeroes $z$ and $z'$ of
$\omega$. 
Since $c-\gp_x(c)$ represents the zero class  in absolute
cohomology, the integral does not depend on the choice of
$\gamma_{z,z'}$. Note that the $\|\cdot\|'$ norm on $\rhr$ 
is invariant under the action of ${\rm SO}(2)$. 

As above, we pick a
positive continuous ${\rm SO}(2)$-bi-invariant function $\phi$ on
$\SL(2,\reals)$ supported on a neighborhood of the identity such
that $\int_{\SL(2,\reals)} \phi(g) \, dg = 1$, and define
\begin{equation}
\label{eq:def:relative:hodge:norm}
\|c\|_x\index{n@$\norm{\;}_x$ the modified Hodge norm at $x$}= \int_{\SL(2,\reals)} \|c\|'_{g
  x} \, \phi(g) \, dg.  
\end{equation}
Then, the $\| \cdot \|_x$ norm on $\rhr$ is also invariant under the
action of ${\rm SO}(2)$.



By~\cite[Lemma 7.5]{EMM-Isolation} there exists some $\constA\label{A:Hodge-cont}$ so that
\be\label{eq:log-cont}
e^{-\ref{A:Hodge-cont}t}\|c\|_{{x}}\leq \|(a_t)_*c\|_{a_t{x}}\leq e^{\ref{A:Hodge-cont}t}\|c\|_{{x}}.
\ee

\subsection{The AGY-norm}\label{sec:AGY-norm}
Let $\|\cdot\|_{{\AGY},x}$ denote the norm defined in~\cite[\S2.2.2]{AGY-EM}.
We recall the definition: let $x=(M,\omega)\in\Halpha$. For any $c\in H^1(M,\Sigma,\cx)$, define
\be\label{def:AGY-norm}
\|c\|_{{\AGY},x}\index{n@$\norm{\;}_{{\AGY},x}$ the $\AGY$ norm at $x$}=\sup_{\gamma}\frac{|c(\gamma)|}{|\Phi(x)(\gamma)|}
\ee
where the supremum is taken over all saddle connections of $\omega$.
This defines a norm and the corresponding Finsler metric is complete, see~\cite{AGY-EM}. 

We note that $\|\;\|_{x}$ and $\|\;\|_{{\AGY},x}$ are commensurable 
to each other on compact subsets of $\Halpha$.  

For every $x=(M,q)\in\mathcal Q_1(\alpha)$,
we define the norms $\|\;\|_x$ and $\|\;\|_{{\AGY},x}$ using the branched double cover $\hat M$.

\begin{lemma}\label{lem:growth-AGY-norm}
Let $c\in H^1(M,\Sigma,\cx)$, $t\geq 0$ and $s\in [0,1]$. Then 
\be\label{eq:growth-AGY-norm}
e^{-2-2t}\|c\|_{{\AGY}, x}\leq \|(a_tu_s)_* c\|_{{\AGY},a_tu_sx}\leq e^{2+2t}\|c\|_{{\AGY}, x}.
\ee
\end{lemma}

\begin{proof}
This is proved in~\cite[Lemma 5.2]{AG-Eigen}, see also~\cite[eq.~(2.13)]{AGY-EM}, we recall the argument. 
Write $c=a'+{\bf i}b'$ and $\Phi(x)=a+{\bf i}b$. Then the definition~\eqref{def:AGY-norm}, implies that  
for all $t\geq 0$ and $|s|\leq 1$ we have 
\be\label{eq:growth-AGY-norm-proof}
\begin{aligned}
\|(a_tu_s)_*c\|_{{\AGY},a_tu_sx}&=\sup_{\gamma}\tfrac{|e^t(a'(\gamma)+sb'(\gamma))+{\bf i}e^{-t}b'(\gamma)|}{|e^t(a(\gamma)+sb(\gamma))+{\bf i}e^{-t}b(\gamma)|}\\
&\leq e^{2t}\sup_{\gamma}\tfrac{|a'(\gamma)+sb'(\gamma)+{\bf i}b'(\gamma)|}{|a(\gamma)+sb(\gamma)+{\bf i}b(\gamma)|}.
\end{aligned}
\ee

By the triangle inequality, for every $|s|\leq 1$ and every $z=z_1+{\bf i}z_2\in\mathbb C$ we have 
\[
|u_sz|=|z_1+sz_2+{\bf i}z_2|\leq |z_1+{\bf i}z_2|+|z_2|\leq 2|z|;
\]
since $z=u_{-s}u_sz$, we also get that $|u_sz|\geq |z|/2$.

This observation and~\eqref{eq:growth-AGY-norm-proof} imply that 
\[
\|(a_tu_s)_*c\|_{{\AGY},a_tu_sx}\leq 4e^{2t}\sup_{\gamma}\tfrac{|a'(\gamma)+{\bf i}b'(\gamma)|}{|a(\gamma)+{\bf i}b(\gamma)|}.
\]

The lower bound follows similarly. 
\end{proof}


\subsection{Non-divergence results}\label{sec:nondivergence} 
Recall that $\mathcal Q_1(\alpha)$ is realized as an affine invariant submanifold in $\mathcal H_1(\hat\alpha)$, moreover,
compact subsets of $\mathcal Q_1(\alpha)$ lift to compact subsets of $\mathcal H_1(\hat\alpha)$.
Let $u\index{u@$u(x)$ the Margulis function for the cusp}:\mathcal H_1(\hat\alpha)\to[2,\infty]$ be the function constructed in~\cite{EsMas-Upperbound} and \cite{A}.

\begin{theorem}
\label{thm:fast-return}
There exists a compact subset $K'_\alpha\subset\Amnfld$ and some $\constA\label{A:fast-return}>0$ 
with the following property.
For every $t_0$ and every $x \in \Amnfld$, there exists 
\[
\mbox{$s \in [0,1/2]$ and $t_0\leq t\leq\max\{2t_0,\ref{A:fast-return}\log u(x)\} $} 
\]
such that $x' = a_t u_s x \in K'_\alpha$. 
\end{theorem}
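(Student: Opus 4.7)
The strategy is a direct application of the Eskin--Masur--Athreya exponential recurrence inequality for the Margulis function $u$ on $\mathcal H_1(\tilde\alpha)$. Recall that by the construction in~\cite{EsMas-Upperbound} and~\cite{A}, there exist absolute constants $c_1, c_2, \sigma>0$ such that for every $x\in\mathcal H_1(\tilde\alpha)$ and every $t\geq 0$,
\[
\int_0^{1/2} u(a_t u_s x)\,ds \;\leq\; c_1 e^{-\sigma t}u(x)+c_2.
\]
Moreover $u$ is proper on each stratum, so for any $M>0$ the sublevel set $\{y:u(y)\leq M\}$ meets the lift of $\Amnfld$ in a compact set, whose image in $\Amnfld$ is therefore also compact.

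The first step is to choose $\ref{A:fast-return}$ large enough, say $\ref{A:fast-return}=2/\sigma$, so that whenever $t\geq \ref{A:fast-return}\log u(x)$ one has $c_1e^{-\sigma t}u(x)\leq c_1$ (using $u(x)\geq 2$). I would then set
\[
t:=\max\{t_0,\,\ref{A:fast-return}\log u(x)\},
\]
which by construction satisfies $t_0\leq t\leq\max\{2t_0,\ref{A:fast-return}\log u(x)\}$; for this $t$ the integral inequality yields
\[
\int_0^{1/2} u(a_t u_s x)\,ds\;\leq\; c_1+c_2\;=:\;M_0.
\]

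The second step is a pigeonhole argument: since the integrand is nonnegative and its integral over $[0,1/2]$ is at most $M_0$, there must exist $s\in[0,1/2]$ with $u(a_tu_sx)\leq 4M_0$. Taking $K'_\alpha\subset\Amnfld$ to be the image of $\{y\in\mathcal H_1(\tilde\alpha):u(y)\leq 4M_0\}$ intersected with the lift of $\Amnfld$ then completes the proof.

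The main conceptual obstacle is the exponential integral inequality for $u$ itself, but this is exactly the content of the cited results~\cite{EsMas-Upperbound, A}; given that input, the remaining work is only to calibrate $\ref{A:fast-return}$ and to translate the averaged bound into a pointwise statement, both of which are routine. The only small care required is ensuring that the compactness of $K'_\alpha$ inside $\Amnfld$ (rather than inside $\mathcal H_1(\tilde\alpha)$) follows from the fact, recorded in \S\ref{sec:notation}, that compact subsets of $\mathcal Q_1(\alpha)$ lift to compact subsets of $\mathcal H_1(\tilde\alpha)$ and vice versa.
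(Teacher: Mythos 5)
Your argument is correct and is essentially the same approach the paper takes: the paper's proof is a one-line citation of \cite[Thm.~2.2]{A} and \cite[Lemma 6.3]{AG-Eigen} (the latter applied with $\delta=1/2$), and your recurrence-inequality-plus-pigeonhole argument is exactly what those citations encapsulate. One small bookkeeping caveat: the exponential integral inequality in \cite{EsMas-Upperbound, A} is stated for the average over $\mathrm{SO}(2)$ (i.e.\ over $r_\theta$), not over the unipotent $u_s$; the transfer of this to a $u_s$-averaged statement on an interval of length $\delta$ is precisely what \cite[Lemma 6.3]{AG-Eigen} supplies, so strictly speaking you should cite that lemma for the displayed inequality over $\int_0^{1/2}\,ds$ rather than attributing it directly to \cite{EsMas-Upperbound, A}.
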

\begin{proof}
{}{
The stratum $\mathcal Q_1(\alpha)$ is an affine invariant submanifold in $\mathcal H_1(\hat\alpha)$. 
The claim thus follows from \cite[Thm.~2.2]{A} and~\cite[Lemma 6.3]{AG-Eigen} applied with $\delta=1/2$.
}
\end{proof}

\subsection{Period box}\label{sec:period-box}
Let $\tilde x=(M,q)\in\tAmn$. For every $r>0$ define
\[
\mathsf R_{r}(\tilde x):=\bigl\{\Phi(\tilde x)+a'+{\bf i}{b'}:{ a'},{ b'}\in H^1(M,\Sigma,\reals),\|a'+{\bf i} b'\|_{{\AGY},\tilde x}\leq r\bigr\}.
\] 
Let now $r>0$ be so that $\Phi^{-1}$ is a homeomorphism on $\mathsf R_{r}(\tilde x)\cap \Phi(\tAmn)$.
Put 
\[
\mathsf B_r(\tilde x)\index{b@$\mathsf B_r(\tilde x)$ ball of radius $r$ around $\tilde x$ in $\tAmn$}=\Phi^{-1}\Bigl(\mathsf R_{r}(\tilde x)\Bigr).
\]
The open subset $\mathsf B_r(\tilde x)$ will be called a {\em period box} of radius $r$ centered at $\tilde x$. 
Thanks to~\cite[Prop.\ 5.3]{AG-Eigen}, $\mathsf B_r(\tilde x)$ is well defined for 
all $0<r\leq 1/2$ and {\em all} $\tilde x\in\tAmn$. We also have the following. 

\begin{lemma}\label{lem:AGY-Moduli}
There exists some $\constA\label{A:AGY-Mod}$ so that for all $x\in\mathcal Q_1(\alpha)$ and every 
$0<r\leq u(x)^{-\ref{A:AGY-Mod}}$ the following hold. Let $\tilde x\in \tAmn$ be a lift of $x$. Then 
\begin{enumerate} 
\item The restriction of the covering map $\pi$ to $\mathsf B_{r}(\tilde x)$ 
is injective. 
\item For all $\tilde x_1, \tilde x_2\in \mathsf B_{r}(\tilde x)$, 
the Teichm\"{u}ller distance between $\tilde x_1$ and $\tilde x_2$ is at most $1$.
\end{enumerate}
\end{lemma}

\begin{proof}
The argument is similar to the one used in the proof of~\cite[Lemma~8.2]{EMM-Isolation}.

For part~(2) we will need the following two facts: $d_{\mathcal T}((a_tu_s)^{\pm1} z,(a_tu_s)^{\pm1}z')\leq 16e^{2t}$ for all $t\geq 0$ and $s\in[-1,1]$
where $d_{\mathcal T}$ denotes the Tichm\"uller distance. 
Moreover, there exist a constant $C\geq 1$ so that 
\[
C^{-1}d_{\AGY}(z,z')\leq d_{\mathcal T}(z,z')\leq Cd_{\AGY}(z,z')\quad\text{for all $z,z'\in K'_\alpha$,}
\]
where $K'_\alpha\subset\Amnfld$ is the compact set introduced in Theorem~\ref{thm:fast-return}. 

 
We now turn to the proof of the lemma. For every $x\in K'_\alpha$, there exists $0<r(x)\leq1/2$ so that $\mathsf B_{r(x)}(x)$ is embedded 
in the sense that the projection from
the Teichm\"uller space $\tAmn$ to the Moduli space $\mathcal Q_1(\alpha)$ restricted to $\mathsf B_{r(x)}(\tilde x)$ is injective.
Let $r_0 = \inf_{x \in K'_\alpha} r(x)$. By compactness of $K'_\alpha$, $r_0 >0$. Decreasing $r_0$ if necessary, we assume that  
for all $x\in K'_\alpha$ and all $\tilde x_1, \tilde x_2\in \mathsf B_{r_0}(\tilde x)$,
the Teichm\"{u}ller distance between $\tilde x_1$ and $\tilde x_2$ is at most $1$.

Let $N\geq 1$ be so that 
\begin{equation}
\label{eq:choice:N}
C2^{4\ref{A:fast-return}-N+16} < r_0\leq 1/2. 
\end{equation}
where $\ref{A:fast-return}$ is as in Theorem~\ref{thm:fast-return}. 

We will show that $\ref{A:AGY-Mod}=N$ satisfies the claims in the lemma.  
First note that in view of~\cite[Prop.\ 5.3]{AG-Eigen}, $\mathsf B(\tilde x):=\mathsf B_{u(x)^{-N}}(\tilde x)$ 
is well defined for all $x\in \mathcal Q_1(\alpha)$ and all the lifts $\tilde x\in \tAmn$. 
Suppose now that there exists $x \in \mathcal Q_1(\alpha)$ and 
$\tilde x_1, \tilde x_2 \in \mathsf B(\tilde x)$ such that $\tilde x_2 = \mce \tilde x_1$ for some $\mce$ in the mapping class group. 
Write
\begin{displaymath}
\tilde x_i =\tilde x + v_i, \quad\text{ where $\|v_i\|_{{\AGY},x}\leq u(x)^{-N}$} 
\end{displaymath}
By Theorem~\ref{thm:fast-return}, there exists $s \in [0,1/2]$ and $\tau \le \ref{A:fast-return} \log u(x)$ such that
$x' \equiv a_\tau u_s x \in K'_\alpha$.

Let $x_i' = a_\tau u_sx_i$, and put $\tilde{x}'_i=a_\tau u_s\tilde x_i$; also put $\tilde x'=a_\tau u_s \tilde x$. 
Then, in view of~\eqref{eq:growth-AGY-norm} we have 
\begin{equation}\label{eq:proof of AGY-Moduli}
\|v_i\|_{{\AGY},x_i'} \le e^{2+2\tau}u(x)^{-N}
\le 8u(x)^{2\ref{A:fast-return}-N+2} \le 2^{2\ref{A:fast-return}-N+5} \le r_0
\end{equation}
where for the last estimate we used (\ref{eq:choice:N}) and the fact that $u(x) \ge 2$. 
However, $\tilde{x}_2' = \mce\tilde{x}'_1$, thus, both $x_1'$ and $x_2'$ 
belong to the projection of $\mathsf B_{r_0}(\tilde{x}')$; this contradicts the fact that $\mathsf B_{r_0}(x')$ is embedded.

This contradiction shows that $\mathsf B_{u(x)^{-N}}(x)$ is embedded, establishing part~(1).

We now turn to part~(2). We use the above notation. Let $\tilde x_1, \tilde x_2\in \mathsf B_{u(x)^{-N}}(x)$, and define 
$x_i' = a_\tau u_sx_i\in K'_\alpha$ and $\tilde{x}'_i=a_\tau u_s\tilde x_i$ as above. 
Then~\eqref{eq:proof of AGY-Moduli} implies that 
\[
d_{\AGY}(\tilde{x}'_1,\tilde{x}'_2)\leq 16u(x)^{2\ref{A:fast-return}-N+2}
\] 
Hence, $d_{\mathcal T}(\tilde{x}'_1,\tilde{x}'_2)\leq 16Cu(x)^{2\ref{A:fast-return}-N+2}$. 
Since $\tilde x_i=(a_\tau u_s)^{-1}\tilde{x}'_i$, we conclude that
\[
d_{\mathcal T}(\tilde x_1,\tilde x_2)\leq Cu(x)^{4\ref{A:fast-return}-N+16}<1
\]
where we used~\eqref{eq:choice:N} and $u(x)\geq 2$ in the last inequality. The proof is complete.
\end{proof}

For every $x\in\mathcal Q_1(\alpha)$ we put
\be\label{eq:def-r(x)}
r(x)=u(x)^{-\ref{A:AGY-Mod}}; 
\ee
for every compact subset $K\subset\mathcal Q_1(\alpha)$, let $r(K)=\inf\{r(x): x\in K\}$\index{r@ $r(x)$ and $r(K)$}.

For every $0<r\leq r(x)$, we let $\mathsf B_r(x)$\index{b@ $\mathsf B_r(x)$} 
denotes $\pi(\mathsf B_r(\tilde x))$ where $\tilde x\in\tAmn$ is an arbitrary lift of $x$.
We refer to $\mathsf B_r(x)$ as the ball of radius $r$ centered at $x$.

\subsection{Horospherical foliation}\label{sec:Horo-Fol}
Given a point ${x}=(M,q)\in\Amnfld$, the tangent space ${\rm T}_{x}\Amnfld$ decomposes as 
\[
{\rm T}_{x}\Amnfld=\reals \mathbf v({x})\index{v@${\bf v}(x)$ the direction of the geodesic flow at $x$}\oplus E^{\unst}({x})\oplus E^{\stbl}({x})
\]
where ${\bf v}({x})$ with $\|{\bf v}(x)\|_{\AGY,x}=1$ determines the direction of the Teichm\"{u}ller geodesic flow, 
\begin{align*}
&E^{\unst}({x})\index{e@$E^{\unst, \stbl}(x)$}={\rm T}_{x}\Amnfld\cap \operatorname D\!\Phi_{x}^{-1}\bigl(H^1(\dagger,\ddagger,\reals)\bigr),\text{ and }\\ 
&E^{\stbl}({x})={\rm T}_{x}\Amnfld\cap\operatorname D\!\Phi_{x}^{-1}\bigl({\bf i}H^1(\dagger,\ddagger,\reals)\bigr).
\end{align*}
where $(\dagger,\ddagger)=(M,\Sigma)$ if $\varsigma=1$ and $(\dagger,\ddagger)=(\hat M,\hat\Sigma)$ if $\varsigma=-1$ ---
recall that $\hat M$ is the orienting double cover of $M$ and we use $\Phi$ to locally identify $\reals\Amnfld$ with $H^1(M,\Sigma,\cx)$ if 
$\varsigma=1$ and with the $H_{\rm odd}^1(\hat M,\hat\Sigma,\cx)$ if $\varsigma=-1$.

If $\Phi({x})=a+{\bf i}b$ for some ${x}\in\Amnfld$, then 
\be\label{eq:E+-real}
E^{\unst}({x})=\{a'\in H^1(M,\Sigma,\reals):\intsc(a',b)=0\},
\ee
and $E^{\stbl}({x})=\{{\bf i}b'\in {\bf i}H^1(M,\Sigma,\reals):\intsc(a,b')=0\}$ when $\varsigma=1$.
Similarly, one can define $E^{\unst, \stbl}$ in the case $\varsigma=-1$.

The subspaces $E^{\unst, \stbl}(x)$ depend smoothly on $x$, moreover, they 
are integrable. We denote the corresponding leaves by $\mlhoro(x)$\index{w@$\mlhoro(x)$ unstable foliation in $\Amnfld$} 
and $W^{\stbl}(x)$\index{w@$W^{\stbl}(x)$ stable foliation in $\Amnfld$}, respectively.
Also put 
\[
W^{\rm cu}(x):=\{a_tW^{\unst}(x): t\in\reals\}\index{w@$W^{\rm cu}(x)$ center-unstable foliation in $\Amnfld$}
\] 
and $W^{\rm cs}(x):=\{a_tW^{\stbl}(x):t\in\reals\}$\index{w@$W^{\rm cs}(x)$ center-stable foliation in $\Amnfld$}.


Let $\mu_{x}^{\unst}$ and $\mu_x^{\stbl}$\index{m@$\mu_x^{\unst},\mu_x^{\stbl}$ conditional measures of $\mu$ along $W^{\unst}(x), W^{\stbl}(x)$} denote the leafwise measures of the natural measure $\mu$ along $W^{\unst}(x)$ and $W^{\stbl}(x)$, respectively. Then $y\mapsto\mu_{y}^{\unst,\stbl}$ is constant along $W^{\unst,\stbl}(x)$, respectively, and we have
\be\label{eq:Margulis-measure}
(a_t)_*\mu_x^\unst=e^{-ht}\mu^\unst_{a_tx}\quad{and}\quad(a_t)_*\mu_x^\stbl=e^{ht}\mu^\stbl_{a_tx};
\ee
see also~\cite[\S4]{AG-Eigen} where these measures are defined using volume forms. 

If $\mathsf B_r(x)$ is a period box centered at $x$, then $\mu|_{\mathsf B_r(x)}$ has a product structure
as $\dif\!\operatorname{Leb}\times\dif\!\mu^{\stbl}\times\dif\!\mu^{\unst}$, see e.g.~\cite[Prop.\ 4.1]{AG-Eigen}.

Given $x\in\Amnfld$ and a period box $\mathsf B_r(x)$ with center $x$ and $0\leq r\leq r(x)$, we let 
\[
\mathsf B_r^{\unst, \stbl}(x)\index{b@ $\mathsf B_r^{\unst}(x), \mathsf B_r^{\stbl}(x), \mathsf B_r^{\rm cu}(x), \mathsf B_r^{\rm cs}(x)$}=\text{the connected component of $x$ in $\mathsf B_r(x)\cap W^{\unst, \stbl}(x)$}.
\] 
Define
$\mathsf B_r^\bullet(x)$ for $\bullet={\rm cu},{\rm cs}$ similarly. 

We also denote functions which are supported on the leaves $W^{\unst}$, $W^{\rm cu}$, etc. using the same superscript, e.g., 
$\phi^{\unst}$ denotes a function which is supported on a leaf $W^{\unst}(x)$.\index{f@$\phi^{\unst}, \phi^{\rm cu}, \phi^{\stbl}, \phi^{\rm cs}$} 

We use the norm $\|\cdot\|_{{\AGY},x}$ to induce a metric $d_{W^{\unst, \stbl}(x)}$ on $\mathsf B^{\unst, \stbl}_r(x)$ for $0<r<r(x)$.
Hence notions such as ${\rm diam}$ etc.\ refer to this metric. 

Let $\tW^\bullet(\tilde x)$\index{w@$\tW^\bullet(\tilde x)$ foliation $\bullet$ in $\tAmn$} denote the foliation $\bullet$ in $\tAmn$, 
and define $\mathsf B^\bullet(\tilde x)$ accordingly.

Let $w^{\unst,\stbl}\in E^{\unst,\stbl}({x})$. Then  
\be\label{eq:AGY-st-unst}
\|(a_t)_*w^{\unst}\|_{\AGY, a_tx}\geq \|w^{\unst}\|_{\AGY,x}\quad\text{and}\quad\|(a_t)_*w^{\stbl}\|_{\AGY, a_tx}\leq \|w^{\stbl}\|_{\AGY,x},
\ee
see~\cite[Lemma 5.2]{AG-Eigen}. Moreover, we have the following uniform hyperbolicity estimate.

\begin{prop}\label{prop:unif-hyp}
Let $K\subset \Amnfld$ be a compact subset.
There exist some $\consta( K)\label{a:unif-hyp}$ 
and some $t_0=t_0(K)$ with the following property.
Let $t\geq t_0$; suppose that $x,a_tx\in K$, moreover, assume that
\[
|\{\tau\in[0,t]:a_{\tau}x\in K\}|\geq t/3.
\]
Then 
\[
\mbox{$\|(a_t)_*w\|_{{\AGY},a_{t}x}\leq e^{-\ref{a:unif-hyp}(K)t}\|w\|_{{\AGY},x}\;\;$ and 
$\;\;\|(a_t)_*w\|_{a_{t}x}\leq e^{-\ref{a:unif-hyp}(K)t}\|w\|_{x}$}
\] 
for all $w\in E^{\stbl}(x)$ and all $t\geq t_0$. 
\end{prop}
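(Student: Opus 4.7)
The plan is to reduce both inequalities to a uniform exponential contraction of the modified Hodge norm on the stable subspace $E^{\stbl}$, which in turn follows from Forni's uniform hyperbolicity theorem for the Kontsevich--Zorich cocycle on compact subsets of strata. The two auxiliary ingredients are the commensurability of the AGY and modified Hodge norms on compact subsets of $\Halpha$ (Section 2.5) and the log-continuity estimate \eqref{eq:log-cont}.

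First I would invoke Forni's theorem, applied on the orienting double cover so that one is in an ambient stratum of abelian differentials, to obtain constants $\eta=\eta(K)>0$ and $s_0=s_0(K)\geq 1$ such that whenever an orbit segment $\{a_s y:s\in[s',s'+s_0]\}$ lies entirely in $K$, one has $\|w\|_{a_{s_0}y}\leq e^{-\eta s_0}\|w\|_y$ for every $w\in E^{\stbl}(y)$. Given the recurrence hypothesis, I would then partition $[0,t]$ into maximal ``good'' subintervals of length $\geq s_0$ on which the orbit stays in $K$, together with complementary ``bad'' intervals, and combine the Forni contraction on the former with the log-continuity bound $\|w\|_{a_\tau y}\leq e^{\ref{A:Hodge-cont}\tau}\|w\|_y$ on the latter to obtain
\[
\|w\|_{a_t x}\leq \exp\bigl(-\eta\, t_{\rm good}+\ref{A:Hodge-cont}\, t_{\rm bad}\bigr)\|w\|_x.
\]
For a suitable enlargement of the compact set $K$ relative to the base set of Theorem \ref{thm:fast-return}, the Lebesgue recurrence bound $|\{\tau\in[0,t]:a_\tau x\in K\}|\geq t/3$ forces $t_{\rm good}$ to be at least a definite positive fraction of $t$, producing net contraction $\|w\|_{a_t x}\leq e^{-\kappa_1 t}\|w\|_x$ for some $\kappa_1=\kappa_1(K)>0$ and all sufficiently large $t$.

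For the AGY norm, the commensurability of the two $\MC$-invariant norms yields a constant $C=C(K)>1$ with $C^{-1}\|\cdot\|_{\AGY,y}\leq \|\cdot\|_y\leq C\|\cdot\|_{\AGY,y}$ for $y\in K$. Since both $x$ and $a_t x$ lie in $K$,
\[
\|w\|_{\AGY,a_t x}\leq C\|w\|_{a_t x}\leq C e^{-\kappa_1 t}\|w\|_x\leq C^2 e^{-\kappa_1 t}\|w\|_{\AGY,x},
\]
and taking the constant $\kappa(K)$ in the proposition to be $\kappa_1/2$, together with $t_0=t_0(K)$ large enough that $C^2\leq e^{\kappa_1 t_0/2}$, absorbs the prefactor into the exponential rate. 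The principal obstacle is the passage from the mere Lebesgue recurrence hypothesis to a lower bound on the good-time fraction $t_{\rm good}/t$: this forces a careful choice of $K$ so that each visit of the orbit to a smaller core compact set persists in $K$ for at least $s_0$ units of time, guaranteeing that the good intervals together cover a definite positive fraction of $[0,t]$.
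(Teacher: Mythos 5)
Your outline identifies the right ingredients---a spectral-gap contraction over the compact set, and commensurability of the Hodge-type and AGY norms on $K$, the latter being exactly the paper's final step---but the way you combine good and bad intervals has a genuine gap, and it is precisely the gap that the paper's citation of~\cite[Thm.~3.15]{ABEM} is meant to close.

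On the bad intervals you invoke the two-sided log-continuity bound~\eqref{eq:log-cont}, giving expansion by a factor $e^{N_1 t_{\rm bad}}$ where $N_1$ is the constant of that estimate, and you conclude net contraction because $t_{\rm good}$ is a positive fraction of $t$. But the hypothesis only gives $|\{\tau : a_\tau x\in K\}|\geq t/3$, so $t_{\rm bad}$ can be as large as $2t/3$, and your bound $\exp\bigl(-\eta\, t_{\rm good}+N_1\, t_{\rm bad}\bigr)$ is a contraction only if $\eta > 2N_1$. There is no reason for this to hold: $N_1$ is a fixed constant of order one (it bounds the log-derivative of a Hodge-type norm under the Teichm\"{u}ller flow), while the contraction rate $\eta(K)$ supplied by Forni's spectral gap over a compact set $K$ is small and in general far smaller than $N_1$. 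The bad-interval expansion therefore dominates and the partitioning argument, as written, fails to close. The actual technical content of~\cite[Thm.~3.15]{ABEM}, which the paper invokes directly, is that the ABEM modified Hodge norm is \emph{constructed} so that on the stable subspace it contracts inside $K$ and is essentially non-increasing outside $K$; this is much finer than two-sided log-continuity and is exactly what makes the Lebesgue recurrence hypothesis alone sufficient. You also correctly flag a second issue---the good set has measure $\geq t/3$ but could be fragmented into intervals shorter than $s_0$, so $t_{\rm good}$ as you define it need not be a positive fraction of $t$---but the fix you sketch, enlarging $K$, is unavailable since $K$ is an arbitrary given compact set; that obstruction, too, is absorbed into ABEM's theorem. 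Once one grants~\cite[Thm.~3.15]{ABEM}, the rest of your argument (transfer to the AGY norm via commensurability on $K$ and absorption of the constant $C^2$ by halving the exponent and taking $t_0$ large) coincides with the paper's proof.
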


\begin{proof}
Let $\|\;\|_{{\rm ABEM},x}$ denote the modified Hodge norm defined in~\cite[\S3]{ABEM}.
Let $C$ be a constant so that
\be\label{eq:ABEM-AGY-K}
C^{-1}\|v\|_{{\rm ABEM},y}\leq \|v\|_{{\AGY},y}\leq C\|v\|_{{\rm ABEM},y}
\ee 
for all $y\in K$.

In view of~\cite[Thm.~3.15]{ABEM}, there exists some $\consta( K)\label{a:unif-hyp-ABEM}$
so that under our assumptions in this proposition we have 
\be\label{eq:unif-hyp-ABEM}
\|(a_t)_*w\|_{{\rm ABEM},a_{t}x}\leq e^{-\ref{a:unif-hyp-ABEM}t}\|w\|_{{\rm ABEM},x}.
\ee 

We now compute
\begin{align*}
\|(a_t)_*w\|_{{\AGY},a_{t}x}&\leq C\|(a_t)_*w\|_{{\rm ABEM},a_tx}&&\text{since $a_tx\in K$}\\
&\leq Ce^{-\ref{a:unif-hyp-ABEM}t}\|w\|_{{\rm ABEM},x}&&\text{by~\eqref{eq:unif-hyp-ABEM}}\\
&\leq C^2e^{-\ref{a:unif-hyp-ABEM}t}\|w\|_{{\AGY},x}&&\text{since $x\in K$}.
\end{align*}
The claim thus holds with $\ref{a:unif-hyp}=\ref{a:unif-hyp-ABEM}/2$ and 
$t_0=\tfrac{4\log C}{\ref{a:unif-hyp-ABEM}}$.
\end{proof}

\begin{lemma}\label{cor:nondiv-horosphere}
Let $K'_\alpha$ be as in Theorem~\ref{thm:fast-return}.
There is a positive constant $\constA\label{A:horo-fast-return}$ and 
for every $0<\theta<1$ there exists 
$\consta(\theta)\label{a:horo-return}$,
and a compact subset $K_\alpha(\theta)\supset K'_\alpha$  
with the following properties. 
Let $x\in\Amnfld$, $0<r\leq r(x)$, and let $\mathsf B_r(x)$ be a period box centered at $x$. Put
\[
\mathsf H_t^{\unst}(x,\theta)\index{h@ $\mathsf H_t^{\unst}(x,\theta)$}:=\bigl\{y\in\mathsf B_r^{\unst}(x):
|\{\tau\in[0,t]:a_{\tau}y\in K_\alpha(\theta)\}|\geq \theta t\bigr\}.
\]

Then for every $t\geq\ref{A:horo-fast-return}\log u(x)$, we have  
\[
\mu_x^{\unst}\left(\mathsf B_r^{\unst}(x)\setminus \mathsf H_t^{\unst}(x,\theta)\right)
\leq e^{-\ref{a:horo-return}(\theta)t}\mu_x^{\unst}(\mathsf B_r^{\unst}(x)).
\]
\end{lemma}

\begin{proof}
See~\cite[Prop.\ 6.1]{AG-Eigen}.
\end{proof}

We apply the above with $\theta=0.5$, and put 
\be\label{eq:H-+-t}
\text{$K_\alpha=K_\alpha(0.5)$, $\quad\ref{a:horo-return}:=\ref{a:horo-return}(0.5),\quad$ and $\quad\mathsf H_t^{\unst}(x)\index{h@$\mathsf H_t^{\unst}(x)$}:=\mathsf H^{\unst}_t(x,0.5)\quad$}
\ee 
for the rest of the paper.

We have the following corollary

\begin{cor}\label{cor:unif-hyp'}
Let $x\in\Amnfld$, and let $t\geq\ref{A:horo-fast-return}\log u(x)$. 
For every $y\in \mathsf H_t^{\unst}(x)$ and every $w\in E^{\unst}(x)$ we have
\[
\|(a_{-t})_*w\|_{\AGY, a_{-t}y}\leq e^{-0.5\ref{a:unif-hyp}(K_\alpha)t}\|w\|_{\AGY, y}.
\] 
\end{cor}

\begin{proof}
Let $\tau_0<\tau_1$ be the first and the last time so that $a_\tau y\in K_\alpha$. 
Then in view of Lemma~\ref{cor:nondiv-horosphere}, $\tau_1-\tau_0\geq 0.5 t\geq (\tau_1-\tau_0)/3$. 
Therefore, by~\eqref{eq:AGY-st-unst} and Proposition~\ref{prop:unif-hyp}, we have 
\begin{align*}
\|(a_{-t})_*w\|_{\AGY, a_{-t}y}&\leq \|(a_{-\tau_1})_*w\|_{\AGY, a_{-\tau_1}y}\\
&\leq e^{-0.5\ref{a:unif-hyp}(K_\alpha)t}\|(a_{-\tau_0})_*w\|_{\AGY, a_{-\tau_0}y}\\
&\leq e^{-0.5\ref{a:unif-hyp}(K_\alpha)t}\|w\|_{\AGY, y}
\end{align*}
as we claimed. 
\end{proof}

\subsection{Smooth structure on affine manifolds}\label{sec:smooth-structure}
As it is done in~\cite[\S5.2]{AG-Eigen}, we use the affine structure to define a smooth structure
on $\tAmn$ and $\mathcal Q_1(\alpha)$. Let us recall the definition of a $C^k$-norm from~\cite{AG-Eigen}, see also~\cite{AGY-EM}.

Let $W\subset \mathcal Q_1(\alpha)$ be an affine submanifold.
For a function $\varphi$ on $W$ define
\[
c_k(\varphi)=\sup|\operatorname D^k\varphi(x,v_1,\ldots,v_k)|,
\]
where the supremum is taken over $x$ in the domain of $\varphi$ and $v_1,\ldots,v_k\in {\rm T}_xW$ with ${\AGY}$-norm at most $1$. 
Define the $C^k$-norm of $\varphi$ as 
$
\|\varphi\|_{C^k}=\sum_{j=0}^k c_j(\varphi).
$

By a $C^k$ function we mean a function whose $C^k$-norm is finite. 
The space of compactly supported $C^k$ functions on $W$ will be denoted by $C_c^k(W)$, similarly, 
we define $C_c^\infty(W)$\index{c@$C_c^\infty(W)$}.

In the sequel we will only need $C^1$-norm of functions. 
To avoid confusion between this norm and 
other relevant norms which will be used, and also since we often use the letter $C$ to denote various constants, 
define
\[
\Sob(\varphi)\index{c@$\Sob(\varphi)$ the $C^1$-norm of $\varphi$}:=\|\varphi\|_{C^1}.
\]
for any $C^1$ function $\varphi$.

In the sequel we will need to replace the characteristic functions of certain sets with their smooth approximations.
The following lemmas will provide such approximations.

\begin{lemma}[Cf.~\cite{AG-Eigen}, Prop.\ 5.8]
\label{lem:partition-unity-AG}
There exists $\constA\label{A:part-unity-exp}\label{A:part-unity-mult}$ so that the following holds.
Let $x\in\mathcal Q_1(\alpha)$. Let $D\subset W^{\unst}(x)$ be a compact set, and let $\epsilon\leq 0.1r(D)$, see~\eqref{eq:def-r(x)}.
There exists a finite collection $\{\varphi_i\}$ of $C^\infty$ functions on $W^{\unst}(x)$ with the following properties:
\begin{enumerate}
\item $0\leq\varphi_i\leq 1$ for all $i$.
\item $\Sob(\varphi_i)\leq \ref{A:part-unity-mult}\epsilon^{-\ref{A:part-unity-exp}}$.
\item For every $i$, $\varphi_i$ is supported on $\mathsf B^{\unst}_\epsilon(y_i)$ for some $y_i\in D$.
\item The covering $\{\mathsf B^{\unst}_\epsilon(y_i)\}$ of $D$ has multiplicity at most $\ref{A:part-unity-mult}$.
\item $\sum\varphi_i\leq 1$, and the equality holds on a neighborhood of $D$. 
\end{enumerate}   
\end{lemma}

\begin{proof}
This is proved in~\cite[Prop.\ 5.8]{AG-Eigen}. 
It is worth mentioning that~\cite[Prop.\ 5.8]{AG-Eigen} is stated for balls of size $\asymp 1$, to get our claim here, 
one needs to apply the argument there not to the $\AGY$ norm, but to the $\AGY$ norm scaled by $1/\epsilon$. 
\end{proof}

%
%
%
%
%
%

Let $W$ be one of the following: $\mathcal Q_1(\alpha)$, $W^{\unst, \stbl}(x)$, or $W^{{\rm cu}, {\rm cs}}(x)$, 
for some $x\in \mathcal Q_1(\alpha)$. Let $E\subset W$ be a compact subset. 
For any $0<\epsilon<0.1r(E)$ define
\[
E_{+,\epsilon}^W=\{y\in W:r(y)\geq \epsilon\text{ and }\mathsf B_\epsilon(y)\cap E\neq\emptyset\};
\]
note that $E_{+,\epsilon}^W$ is an open subset  of $W$ which contains $E$.

Let $r>0$ and $L>1$. Let $\mathcal S_{W}(E, r, L)$ \index{S@$\mathcal S_W(E, r, L)$ the set of Borel functions supported in $E$ which may be approximated by smooth functions} denote the 
class of Borel functions $0\leq f\leq 1$ supported and defined everywhere in $E$ with the following properties: 
for all $\epsilon\leq r/10L$ there exist $\varphi_{+,\epsilon},\varphi_{-,\epsilon}\in C_c^\infty(E_{+,\epsilon}^W)$ so that


\begin{enumerate}
\item[($\mathcal S$-1)] $\varphi_{-,\epsilon}\leq f\leq \varphi_{+,\epsilon}$,
\item[($\mathcal S$-2)] $\Sob(\varphi_{\pm,\epsilon})\leq \epsilon^{-L}$, and 
\item[($\mathcal S$-3)] $\|\varphi_{+,\epsilon}-\varphi_{-,\epsilon}\|_2\leq \epsilon^{1/2}\|f\|_2$.
\end{enumerate} 

If $W$ is clear from the context, we denote $\mathcal S_{W}(E, r, L)$ and $E_{+,\epsilon}^W$ simply by $\mathcal S(E,r,L)$ and $E_{+,\epsilon}$, respectively.

\begin{lemma}
\label{lem:Margulis-prop}
There exists some $L$ depending only on $\alpha$ so that  for all $0<r\leq r(x)$,
\[
1_{\mathsf B^{{\unst, \stbl}}_r(x)}\in\mathcal S_{W^{\unst, \stbl}(x)}(\mathsf B^{{\unst, \stbl}}_r(x),r, L).
\]
Similarly, $1_{\mathsf B_{r}(x)}\in\mathcal S(\mathsf B_{r}(x),r,L)$ for all $0<r\leq r(x)$.
\end{lemma}

\begin{proof}
We will show the claims hold if we choose $L>2\ref{A:part-unity-mult}$, see Lemma~\ref{lem:partition-unity-AG}, large enough.
Apply Lemma~\ref{lem:partition-unity-AG} with $\epsilon$ and  
$D=\mathsf B_{r-2\epsilon}^{\unst}$, and 
denote by $\{\varphi_{i,-}\}$ the functions obtained from that lemma. 
For a second time, apply Lemma~\ref{lem:partition-unity-AG} with
$\epsilon$ and $D=\mathsf B_r^{\unst}(x)$, and denote by $\{\varphi_{i,+}\}$ the functions thus obtained.   
Put
\[
\varphi_{\epsilon,-}=\sum\varphi_{i,-}\quad\text{and}\quad\varphi_{\epsilon,+}=\sum\varphi_{i,+}.
\]
These functions satisfy ($\mathcal S$-1) thanks to Lemma~\ref{lem:partition-unity-AG}(1) and~(5). 
Moreover, they satisfy ($\mathcal S$-2) thanks to Lemma~\ref{lem:partition-unity-AG}(1)---(4) and the fact that $L>2\ref{A:part-unity-mult}$. 

To see ($\mathcal S$-3), first note that $\mu_x^{\unst}\big(\mathsf B_r^{\unst}(x)\setminus\mathsf B_{r-2\epsilon}^{\unst}\big)\ll\epsilon$
where the implied constant depends only on $\alpha$. The claim in ($\mathcal S$-3) thus holds true in view of Lemma~\ref{lem:partition-unity-AG}(5) if we choose $L$ large enough, depending on $\alpha$. 

The second claim follows from the first claim, using the product structure of $\mathsf B_r(x)$ and of the measure $\mu$.
\end{proof}

We fix once and for all some $L$ so that Lemma~\ref{lem:Margulis-prop} holds true 
and drop $L$ from the notation. In particular, $\mathcal S(E,r, L)$ will be denoted by $\mathcal S(E,r)$.\index{s@$\mathcal S(E,r)$} 

Abusing the notation we will write $\mathcal S(x,r)$\index{s@$\mathcal S(x,r)$} for $\mathcal S(E,r)$ 
if the compact subset $E$ is not relevant except for the fact that
it is a compact subset containing the point $x$.

\section{Translates of horospheres}\label{sec:exp-mix}
In this section we will use a fundamental result of Avila,~Gou\"{e}zel, and~Yoccoz,~\cite{AGY-EM, AG-Eigen}
together with Margulis' thickening technique,~\cite{Margulis-Thesis, EMc, KM}, to study translations of pieces of the horospherical foliations along the geodesic flow.

\begin{theorem}[Exponential Mixing, \cite{AGY-EM, AJ-EMQ, AG-Eigen}]\label{thm:EM}
Let $(\mathcal M, \mu)$ be an affine invariant manifold. There exists a positive constant 
$\kappa=\kappa(\mathcal M, \mu)$ so that the following holds.
Let $\Psi_1,\Psi_2\in C_c^\infty(\mathcal M)$, then 
\[
\left|\int \Psi_1(a_tx)\Psi_2(x)\dif\!\mu(x)-\mu(\Psi_1)\mu(\Psi_2)\right|\ll \Sob(\Psi_1)\Sob(\Psi_2)e^{-\kappa t}
\]
where the implied constant depends on $(\mathcal M, \mu)$.
\end{theorem}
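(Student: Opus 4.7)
The plan is to follow the strategy of Avila--Gou\"ezel--Yoccoz and its extension by Avila--Gou\"ezel. Since this is a statement for a general affine invariant manifold $\mathcal M$, I would first reduce the problem to a symbolic model for the Teichm\"uller geodesic flow. Concretely, the first step is to use Rauzy--Veech induction (or rather its Zorich acceleration) to realize the restriction of $a_t$ to $\mathcal M$ as a suspension flow over a countable-state Markov shift $(\Sigma, \sigma)$, with a roof function $r$ essentially measuring first-return time into a compact transversal. The affine invariant structure of $\mathcal M$ is preserved by the induction because $\mathcal M$ is cut out by linear equations in period coordinates, so the induction restricts to an induced map on a suitable piece of $\mathcal M$.

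The next step is to establish the two analytic ingredients that drive the argument: (i) exponential tails for the roof function $r$ with respect to the invariant Gibbs-type measure (which for the principal stratum follows from Athreya's non-divergence, and for $\mathcal M$ from the extensions in \cite{AG-Eigen} combined with the fact that compact subsets of $\mathcal M$ lift to compact subsets of the ambient stratum), and (ii) a Diophantine / uniform non-integrability (UNI) condition for the roof cocycle, which says the temporal distance function has a controlled transverse non-degeneracy. These two ingredients feed into Dolgopyat's method: one shows that a family of twisted transfer operators $\mathcal L_{s}$ (parametrized by the dual variable to time) acts on an appropriate anisotropic Banach space of functions on $\Sigma$ with a spectral gap of size comparable to $|\mathrm{Re}\, s|^{-\sigma}$ for some $\sigma>0$, uniformly in $\mathrm{Im}\, s$.

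From this spectral bound one derives, by a standard contour-shift argument in the Laplace variable, exponential decay of correlations for suitably regular observables on the symbolic model. The final step is to transfer this back to smooth observables on $\mathcal M$: one approximates $\Psi_1,\Psi_2 \in C^\infty_c(\mathcal M)$ by functions that are constant on small local stable, unstable and flow pieces, invoking Lemma~\ref{lem:partition-unity-AG} to build smooth bump functions along $W^{\unst}$ with $C^1$-norm controlled by the AGY metric. The approximation error is bounded in terms of $\Sob(\Psi_i)$ using the log-Lipschitz continuity \eqref{eq:log:abs:unif:cts} and the local product structure $\dif\!\operatorname{Leb}\times\dif\!\mu^{\stbl}\times\dif\!\mu^{\unst}$, yielding the final estimate with implicit constant depending only on $(\mathcal M,\mu)$.

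The hard part, by a large margin, is the Dolgopyat estimate for the twisted transfer operator; this is where one exploits the hyperbolicity of $a_t$ transverse to the flow together with the UNI condition, and this part requires genuinely new work when one passes from the ambient stratum to a proper affine invariant submanifold, because one must verify that the UNI condition survives the restriction. All other steps (symbolic coding, tail estimates, and the smooth-to-symbolic approximation) are essentially adaptations of techniques already present in the cited papers.
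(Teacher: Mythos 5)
The paper does not prove this theorem: it is stated as an imported result, with the proof deferred to the cited references \cite{AGY-EM, AJ-EMQ, AG-Eigen} (the only original remark in the paper is the observation, made immediately after the statement, that $\Sob$ is stronger than the $\mathrm{SO}(2)$-directional Sobolev norm actually used in those references, so the statement with $\Sob$ follows a fortiori). So there is no ``paper's proof'' to compare your sketch against line by line; what you wrote is an account of how the cited papers establish the result, and that account is partly inaccurate.

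Specifically, the symbolic route you describe --- Rauzy--Veech/Zorich acceleration, exponential tails for the roof function, a UNI condition, and a Dolgopyat estimate for twisted transfer operators on a countable Markov shift --- is the AGY argument, which was carried out for the principal stratum of abelian differentials in \cite{AGY-EM} and extended to quadratic differentials in \cite{AJ-EMQ}. For a general affine invariant manifold $\mathcal M$, \cite{AG-Eigen} does \emph{not} proceed by coding the restricted flow symbolically and re-verifying UNI on $\mathcal M$, as your last paragraph suggests. Instead, the argument there is functional-analytic: one uses the $\SL(2,\reals)$-structure and a quantitative recurrence estimate (via the Margulis-type function $u$, as in \cite{A} and \cite{EsMas-Upperbound}) to bootstrap from a weak a priori mixing rate to a uniform spectral gap for the unitary $\SL(2,\reals)$-representation on $L^2_0(\mathcal M,\mu)$, and then exponential mixing for $a_t$ follows from the theory of spherical functions. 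This sidesteps exactly the issue you flag (that a symbolic coding and a UNI condition might not survive restriction to a proper affine submanifold). Your outline is a reasonable sketch of the AGY method but misattributes it as the route to the general statement; if you want to reprove the theorem as stated, you should follow the bootstrapping/spectral gap strategy of \cite{AG-Eigen}, using the symbolic approach only to supply the input in the case of the ambient stratum.
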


We remark that combining \cite{AGY-EM, AJ-EMQ, AG-Eigen} and~\cite{Rn}, the $\Sob$ norm in Theorem~\ref{thm:EM} may be replaced by the $p$-H\"older norm for any $p>0$. However, if we use the $p$-H\"older norm, the constant $\kappa$ will, in general, depend on $p$; in particular, $\kappa$ tends to $0$ as $p$ tends to $0$, see~\cite[Thm.~1]{Rn} and~\cite[Thm.~2.14]{AGY-EM}.    

It is also worth mentioning that the $\Sob$ norm in Theorem~\ref{thm:EM} 
may be taken to include derivatives only in the direction of ${\rm SO}(2)\subset\SL(2,\bbr)$, see~\cite{CHH} and~\cite[Thm.~1]{Rn} and references there. Our choice, $\Sob$, is more restrictive; this is tailored to our applications later, e.g., we will use the estimate that 
$\|\phi\|_\infty\leq \Sob(\phi)$ for any $\phi\in C_c^\infty(\mathcal M)$.

\begin{prop}\label{prop:thickening-smooth}
There exists some $\consta\label{a:th-smooth}$, depending on $\alpha$, 
with the following property.
Let $x\in\Amnfld$, $0<r\leq r(x)$, and let $\mathsf B_r(x)$ be a period box centered at $x$.
Let $\psi^{\unst}\in C_c^\infty(\mathsf B^{\unst}_r(x))$, then for any $\phi\in C_c^\infty(\mathcal Q_1(\alpha))$ we have
\[
\left|\int_{\mlhoro(x)}\phi(a_{\mlsign t}y)\psi^{\unst}(y)\dif\!\mu_x^{\unst}(y)-\int_{\Amnfld} \phi\dif\!\mu\int_{\mlhoro(x)}\psi^{\unst}\dif\!\mu_x^{\unst}\right|\leq \Sob(\phi)\Sob(\psi^{\unst})e^{-\ref{a:th-smooth} t}.
\]   
\end{prop}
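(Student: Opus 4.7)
The plan is to carry out the classical Margulis thickening argument: replace the integral against $\psi^{\unst}$ on the unstable leaf by an integral over a small product neighbourhood of the form $\psi^{\unst}\otimes\rho^{\rm cs}$, apply exponential mixing (Theorem~\ref{thm:EM}) to this honest integral against $\mu$, and then undo the thickening, optimizing the thickening radius against $t$.

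First, using the local product structure $\dif\mu=\dif\!\mathrm{Leb}\times\dif\mu_x^{\stbl}\times\dif\mu_x^{\unst}$ on $\mathsf B_r(x)$ (see~\S\ref{sec:Horo-Fol}), fix a small parameter $0<\eta<r/10$ and choose a nonnegative $\rho^{\rm cs}\in C_c^\infty(\mathsf B^{\rm cs}_{\eta}(x))$ with $\int\rho^{\rm cs}\,\dif(\mathrm{Leb}\otimes\mu_x^{\stbl})=1$ and $\Sob(\rho^{\rm cs})\ll \eta^{-L_1}$ for some $L_1=L_1(\alpha)$ (such $\rho^{\rm cs}$ exists by Lemma~\ref{lem:partition-unity-AG}). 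Define on $\mathsf B_r(x)$ the function
\[
\Psi(y)=\psi^{\unst}(y^{\unst})\rho^{\rm cs}(y^{\rm cs}),
\]
using the product decomposition $y\leftrightarrow(y^{\rm cs},y^{\unst})$. Then $\int\Psi\,\dif\mu=\int\psi^{\unst}\,\dif\mu_x^{\unst}$ and $\Sob(\Psi)\ll \Sob(\psi^{\unst})\,\eta^{-L_1}$.

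The second step is to quantify the thickening error. For $(y^{\rm cs},y^{\unst})\in\mathrm{supp}(\rho^{\rm cs})\times\mathrm{supp}(\psi^{\unst})$, write $y=\Phi^{-1}\bigl(\Phi(y^{\unst})+s\,\mathbf v(y^{\unst})+w\bigr)$ with $w\in E^{\stbl}(y^{\unst})$ and $|s|,\|w\|_{\AGY,y^{\unst}}\ll\eta$. Proposition~\ref{prop:AG-5.3} and~\eqref{eq:agy-non-expand} give
\[
a_t y=\Phi^{-1}\bigl(\Phi(a_t y^{\unst})+s\,\mathbf v(a_t y^{\unst})+w\bigr),\qquad \|w+s\mathbf v\|_{\AGY,a_t y^{\unst}}\leq \|w+s\mathbf v\|_{\AGY,y^{\unst}}\ll \eta,
\]
so $a_ty$ lies within AGY-distance $O(\eta)$ of $a_t y^{\unst}$. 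Therefore $|\phi(a_ty)-\phi(a_ty^{\unst})|\ll\Sob(\phi)\,\eta$, and integrating over the product yields
\[
\left|\int\phi(a_t y)\Psi(y)\dif\mu(y)-\int \phi(a_ty^{\unst})\psi^{\unst}(y^{\unst})\dif\mu_x^{\unst}(y^{\unst})\right|\ll_x \Sob(\phi)\Sob(\psi^{\unst})\,\eta.
\]

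Third, applying Theorem~\ref{thm:EM} to $\phi$ and $\Psi$ yields
\[
\left|\int\phi(a_t y)\Psi(y)\dif\mu(y)-\int\phi\,\dif\mu\int\psi^{\unst}\dif\mu_x^{\unst}\right|\ll e^{-\kappa t}\Sob(\phi)\Sob(\psi^{\unst})\eta^{-L_1}.
\]
Combining the two bounds gives a total error of order $\Sob(\phi)\Sob(\psi^{\unst})\bigl(e^{-\kappa t}\eta^{-L_1}+\eta\bigr)$, and choosing $\eta=e^{-\kappa t/(L_1+1)}$ produces the claimed exponential decay with $\ref{a:th-smooth}=\kappa/(L_1+1)$. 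Uniformity of the implied constant on compact subsets of $\mathcal Q_1(\alpha)$ follows because the radius $r$, the constant in $\Sob(\rho^{\rm cs})\ll\eta^{-L_1}$, and the bound $\mu_x^{\unst}(\mathrm{supp}\,\psi^{\unst})\ll 1$ can all be chosen uniformly for $x$ in a compact set.

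The main obstacle is matching the two error terms: the mixing estimate degrades like $\eta^{-L_1}$ as the thickening shrinks, while the Lipschitz replacement error behaves like $\eta$; the key input that keeps the thickening error genuinely linear in $\eta$ (rather than expanding exponentially in $t$) is the non-expansion bound~\eqref{eq:agy-non-expand} coming from Proposition~\ref{prop:AG-5.3}, which ensures that trajectories started inside a small $\mathsf B^{\rm cs}_\eta$ stay within AGY-distance $O(\eta)$ of the unstable orbit for all positive time.
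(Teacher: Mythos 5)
Your strategy — smear $\psi^{\unst}$ into a product test function $\Psi$ on the period box, apply exponential mixing (Theorem~\ref{thm:EM}), then undo the smearing and optimize the thickening radius against $t$ — is the same one the paper follows, and the overall structure of your proposal is sound. The difference is in how the thickening error $|\phi(a_ty)-\phi(a_ty^{\unst})|$ is controlled. The paper first uses the non-divergence estimate (Corollary~\ref{cor:nondiv-horosphere}) to discard the part of the horosphere that does not return to and recur in the compact set $K_\alpha$, restricting attention to $\mathsf H_t^{\unst}(x)$, and then applies the uniform contraction estimate Proposition~\ref{prop:unif-hyp} (valid only along trajectories spending a definite fraction of $[0,t]$ in a compact set) before comparing $\phi(a_tz)$ and $\phi(a_tz^{\unst})$; it also tracks a multiplicative error of size $O(\epsilon^{\ref{a:prod-measure}})$ coming from the local product structure of $\mu$. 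You bypass all of this, appealing only to the non-expansion coming from Proposition~\ref{prop:AG-5.3} and~\eqref{eq:agy-non-expand}, which applies uniformly over the thickened support and yields a thickening error linear in $\eta$. That is a genuinely lighter route: you avoid Corollary~\ref{cor:nondiv-horosphere}, Proposition~\ref{prop:unif-hyp} and the attendant restriction to $\mathsf H_t^{\unst}(x)$. What the paper's heavier version buys is robustness — explicit control over trajectories that visit the cusp — but for this particular proposition your observation that non-expansion suffices is correct.

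One small repair is needed in your second step. Quoted literally, the non-expansion~\eqref{eq:agy-non-expand} for the combined center-stable shift $w+s\mathbf v$ is delicate: for an individual saddle connection $\gamma$ with $\Re\Phi(x)(\gamma)=0$, $\Im\Phi(x)(\gamma)=1$, and $w(\gamma)$ close to $s\,\Im\Phi(x)(\gamma)$, the ratio $|a_t(w+s\mathbf v)(\gamma)|/|\Phi(a_tx)(\gamma)|$ can grow with $t$, so you should not lean on the combined statement as written. The clean way to get your $O(\eta)$ bound is to write the period-coordinate displacement of $a_ty$ from $a_ty^{\unst}$ as $a_tw+s\,\mathbf v(a_ty^{\unst})$, use the honest non-expansion of the pure stable part $\|a_tw\|_{\AGY,a_ty^{\unst}}\le\|w\|_{\AGY,y^{\unst}}$ (which does hold saddle connection by saddle connection for $w\in E^{\stbl}$), use $\|\mathbf v(\,\cdot\,)\|_{\AGY,\,\cdot}\equiv 1$, and close with the triangle inequality; the AGY-norm comparability along the short linear path (the estimate quoted in the proof of Lemma~\ref{lem:partition-unity-AG}, which the paper only records for the unstable direction but which holds equally for the center-stable direction) then gives $d_{\AGY}(a_ty,a_ty^{\unst})\ll\eta$. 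With that decomposition in place of the single inequality~\eqref{eq:agy-non-expand}, your proof goes through.
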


We need some notation; we discuss the case $\varsigma=1$, the case $\varsigma=-1$ is similar. 
Let $\Phi(x)=a+{\bf i}b$; recall from~\eqref{eq:E+-real} that
\[
E^{\unst}({x})=\{a'\in H^1(M,\Sigma,\reals):\intsc(a',b)=0\}.
\]
Similarly $E^{\stbl}({x})=\{{\bf i}b'\in {\bf i}H^1(M,\Sigma,\reals):\intsc(a,b')=0\}$. 

These spaces can alternatively be described as follows. Recall that subspace 
$E(x)=\mbox{span}\{a,b\}$, then $E_{\mathbb C}(x)$ is $\SL_2(\bbr)$ equivariant. 
Let 
\[
H^1_{\mathbb C}(x)^\perp:=\{c\in H^1(M,\Sigma,\bbc): p(c)\wedge p(E(x)_{\mathbb C})=0\},
\]
similarly define $H^1_{\mathbb R}(x)^\perp$. 

The unstable leaf $W^{\unst}(x)$ is locally identified with $\Phi(x)+sb+w$ for 
$s\in\bbr$ and $w\in H^1_{\mathbb R}(x)^\perp$. 
Similarly the center stable leaf $W^{\rm cs}(x)$ is locally identified with $\Phi(x)+\tau{\bf v}(x)+s'{\bf i}a+{\bf i}w'$
where $\tau,s'\in\bbr$ and $w'\in H^1_{\mathbb R}(x)^\perp$.

Let $0<r\leq0.1r(x)$ and let $y\in \mathsf B_r(x)$. Write $\Phi(y)=a_y+{\bf i}b_y$. 
We define the stable projection $y^{\unst}\in B^{\unst}_{2r}(x)$
as the unique point so that $\Phi(y)=\Phi(y^\unst)+\tau{\bf v}(y)+sa_y+w$ where $\tau, s\in\bbr$ with $|\tau|, |s|\leq 2r$
and $w\in H^1_{\mathbb R}(y)^\perp$ with $\|w\|_{\AGY, x}\leq 2r$. Put  
\[
\mathsf{FB}_r(x)=\{y\in \mathsf B_r(x): y^{\unst}\in\mathsf B_r^{\unst}(x)\}.
\]
Then $\mathsf B_r^{\unst}(x)\subset \mathsf{FB}_r(x)$. 

For every $0<\delta<0.1r$ and every $y\in \mathsf{B}_r(x)$, let
\[
\mathsf D^{\rm cs}_{\delta}(y)=\{a_\tau z: |\tau|\leq \delta, z\in W^{\stbl}(y), \Phi(z)=\Phi(y)+w, \|w\|_{{\AGY},x}\leq \delta\}.
\]
For every $y\in\mathsf{B}_r^{\unst}(x)$, let $p^{\rm cs}_y:W^{\rm cs}(y)\cap \mathsf B_{r}(x)\to
W^{\rm cs}(x)$ be the projection along unstable leaves. Then $0.5\leq {\rm Jac}(p^{\rm cs}_y)\leq 2$, moreover we have 
\[
W^{\rm cs}(x)\cap \mathsf B_{0.1\delta}(x)\subset p^{\rm cs}_y\bigl(\mathsf D^{\rm cs}_{\delta}(y)\bigr)\subset W^{\rm cs}(x)\cap \mathsf B_{10\delta}(x). 
\] 

We now begin the proof of the Proposition~\ref{prop:thickening-smooth}.

\begin{proof} 
The idea is to relate the integral $\int_{\mlhoro(x)}\phi(a_{\mlsign t}y)\psi^{\unst}(y)\dif\!\mu_x^{\unst}(y)$ to correlations of the function $a_{-t}\phi$
with a {\em thickening} of $\psi^{\unst}$ in the direction of $W^{\rm cs}(x)$. Then we may use Theorem~\ref{thm:EM} to conclude the proof.

To that end, let $0<\epsilon<0.01r(x)$ be a parameter which will be fixed later. In particular,  
it will be taken to be of the form $e^{-\kappa t}$. Let $\psi^{\rm cs}$ 
be a smooth function supported in $\mathsf D^{\rm cs}_{\epsilon}(x)$ 
so that $\int_{W^{\rm sc}(x)}\psi^{\rm cs}=1$. We can choose such a function so that it moreover satisfies 
$\Sob(\psi^{\stbl})\ll \epsilon^{-\constA\label{A:char-sob}}$ where $\ref{A:char-sob}$ and the implied constant depend on $\alpha$. 


Define $\Psi$ on $\mathsf {FB}_r(x)$ by
\be\label{eq:psi-def}
\Psi(y)=\lambda_{y^{\unst}}\psi^{\rm cs}\bigl(p_{y^{\unst}}^{\rm cs}(y)\bigr)\cdot \psi^{\unst}(y^{\unst})
\ee
where $\lambda_{y^\unst}^{-1}=\int_{W^{\rm cs}(y^{\unst})}\psi^{\rm cs}\bigl(p_{y^{\unst}}^{\rm cs}(w)\bigr)\dif\!\mu_{y^\unst}^{\rm cs}(w)$. 
Extend $\Psi$ to a smooth function on $\Amnfld$ by defining $\Psi(y)=0$ for all
$y\not\in\mathsf {FB}_r(x)$; note that $\mu(\Psi)=\mu_x^{\unst}(\psi^{\unst})$, see the computation in~\eqref{eq:em-ineq-3}.

We need the following lemma.
\begin{lem*}
There exists $\consta\label{a:f-abos-cont}$ depending only on $\alpha$ so that 
\begin{equation}\label{eq:em-ineq-4}
\left|\int_{\mlhoro(x)}\phi(a_{\mlsign t}y)\psi^{\unst}(y)\dif\!\mu_x^{\unst}(y)-\int_{\Amnfld}\phi(a_{\mlsign t}z)\Psi(z )\dif\!\mu(z)\right|\ll 
\Sob(\phi)\Sob(\Psi)\epsilon^{\ref{a:f-abos-cont}}
\end{equation}
where the implied constant depends only on $\alpha$.
\end{lem*}

Let us assume the lemma and finish the proof of the proposition. 
Optimizing the choice of $\epsilon$ to be of size $e^{-\kappa t}$ for some small $0<\kappa<1$, 
the proposition follows from~\eqref{eq:em-ineq-4} and Theorem~\ref{thm:EM} applied with $\Psi_1=\phi$ and $\Psi_2=\Psi$ --- recall again that $\mu(\Psi)=\mu_x^{\unst}(\psi^{\unst})$. 
\end{proof}

\begin{proof}[Proof of the Lemma]
Since $\Psi$ is supported in $\mathsf {FB}_r(x)$, we need to estimate  
\begin{equation}\label{eq:em-ineq-1}
\int_{\mathsf B_r^{\unst}(x)}\phi(a_{\mlsign t}y)\psi^{\unst}(y)\dif\!\mu_x^{\unst}(y)-\int_{\mathsf {FB}_r(x)}\phi(a_{\mlsign t}z)\Psi(z )\dif\!\mu(z).
\end{equation}


Let $z\in\mathsf {FB}_r(x)$.
Recall that $\Phi(z)=\Phi(z^{\unst})+w$ where $w\in\bbr{\bf v}(z^{\unst})+E^{\stbl}(z^{\unst})$, indeed $z\in W^{\rm cs}(z^{\unst})$. 
In view of \eqref{eq:AGY-st-unst} we have
\[
\|(a_t)_*w\|_{\AGY, a_tx}\leq \|w\|_{\AGY,x}.
\]
Thus using the definition of $\Sob(\phi)$, we have
\[
|\phi(a_{\mlsign t}z)-\phi(a_{\mlsign t}z^{\unst})|\ll \epsilon^{\ref{a:f-abos-cont}}\Sob(\phi)
\]
where $\ref{a:f-abos-cont}$ and the implied constant depend only on $\alpha$.


In consequence, we may replace $\phi(a_{\mlsign t}z)$ by $\phi(a_{\mlsign t}z^{\unst})$ in~\eqref{eq:em-ineq-1}, and use  
the bound $\|\cdot\|_\infty\leq\Sob(\cdot)$, to conclude the following 
\begin{multline}\label{eq:em-ineq-2}
\left|\int_{\mathsf B_r^{\unst}(x)}\phi(a_{\mlsign t}y)\psi^{\unst}(y)\dif\!\mu_x^{\unst}(y)-\int_{\mathsf {FB}_r(x)}\phi(a_{\mlsign t}z)\Psi(z )\dif\!\mu(z)\right|\ll  \Sob(\phi)\Sob(\Psi)\epsilon^{\ref{a:f-abos-cont}}\quad+\\
\left|\int_{\mathsf B_r^{\unst}(x)}\phi(a_{\mlsign t}y)\psi^{\unst}(y)\dif\!\mu_x^{\unst}(y)-\int_{z\in\mathsf {FB}_r(x)}\phi(a_{\mlsign t}z^{\unst})\Psi(z )\dif\!\mu(z)\right|
\end{multline} 
where the implied constant depends on $\alpha$.

Recall the definition of $\Psi$ from~\eqref{eq:psi-def}, in particular recall the normalizing factor $\lambda_{y^{\unst}}$.
This and the product structure of $\mu$ yield the following  
\begin{align}
\notag\int_{z\in\mathsf {FB}_r(x)}\phi(a_{\mlsign t}z^{\unst})\Psi(z )\dif\!\mu(z)&=\int_{z\in\mathsf {FB}_r(x)}\lambda_{z^{\unst}} \phi(a_{\mlsign t}z^{\unst})\psi^{\rm cs}\bigl(p_{z^{\unst}}^{\rm cs}(z)\bigr)\psi^{\unst}(z^{\unst})\dif\!\mu(z)\\
\notag&=\int_{\mathsf B^{\unst}_r(x)}\phi(a_{\mlsign t}z^{\unst})\psi^{\unst}(z^{\unst})\int_{W^{\rm cs}(z^{\unst})}\lambda_{z^{\unst}}\psi^{\rm cs}\bigl(p_{z^{\unst}}^{\rm cs}(w)\bigr)\dif\!\mu_{z^{\unst}}^{\rm cs}(w)\dif\!\mu_x^{\unst}(z^{\unst})\\
\label{eq:em-ineq-3}&=\int_{\mathsf B_r^{\unst}(x)}\phi(a_{\mlsign t}y)\psi^{\unst}(y)\dif\!\mu_x^{\unst}(y). 
\end{align}

We now combine the estimates in~\eqref{eq:em-ineq-2} and~\eqref{eq:em-ineq-3}, and get the following.
\[
\left|\int_{\mlhoro(x)}\phi(a_{\mlsign t}y)\psi^{\unst}(y)\dif\!\mu_x^{\unst}(y)-\int_{\Amnfld}\phi(a_{\mlsign t}z)\Psi(z )\dif\!\mu(z)\right|\ll 
\Sob(\phi)\Sob(\Psi)\epsilon^{\ref{a:f-abos-cont}}
\]
where the implied constant is absolute.
\end{proof}

\begin{remark}
It is worth mentioning that Proposition~\ref{prop:thickening-smooth}
and its proof hold for any affine invariant manifold, $(\mathcal M,\mu)$. In the sequel, however,
we will only need this result for $\Amnfld$; and even more specifically, in our application to counting problems, we will need this result for 
the principal stratum $\Qalpha$. 
The main result in~\cite{AGY-EM} was generalized to $\Amnfld$ in~\cite{AJ-EMQ}.
\end{remark}

\begin{corollary}\label{cor:thickening-box}
There exist $\consta\label{a:th-box}$, $\consta\label{a:th-box-2}$, and $\constA\label{A:th-box-3}$ so that the following holds.
Let $x,z\in\Amnfld$ and suppose $0<r,r'\leq 0.01\min\{r(x), r(z)\}$. Let $\mathsf B\subset\mathsf B_{r'}(z)$ be so that 
$1_{\mathsf B}\in\mathcal S(z,r')$ and let $\psi^{\unst}\in C_c^\infty(\mathsf B^{\unst}_{r}(x))$.
Then for every $\epsilon<r'/L$, see Lemma~\ref{lem:Margulis-prop}, we have 
\[
\left|\frac{1}{\mu(\mathsf B)}\int_{\mlhoro(x)}1_{\mathsf B}(a_{\mlsign t}y)\psi^{\unst}(y)\dif\!\mu_{x}^{\unst}(y)-\int \psi^{\unst}\dif\!\mu_{x}^{\unst}\right|\leq \epsilon^{-\ref{A:th-box-3}}\Sob(\psi^{\unst})e^{-\ref{a:th-box} t}+\Sob(\psi^{\unst})\epsilon^{\ref{a:th-box-2}}.
\]
\end{corollary}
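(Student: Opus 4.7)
The plan is to sandwich $1_{\mathsf B}$ between smooth approximations provided by the $\mathcal{S}$-class structure and then invoke Proposition~\ref{prop:thickening-smooth} for each approximation. Since $1_{\mathsf B}\in\mathcal S(z,r')$, for each $\epsilon<r'$ there exist $\varphi_{\pm,\epsilon}\in C_c^\infty(E_{+,\epsilon})$ (with $E=\mathsf B$) satisfying $\varphi_{-,\epsilon}\leq 1_{\mathsf B}\leq\varphi_{+,\epsilon}$, $\Sob(\varphi_{\pm,\epsilon})\leq \epsilon^{-L_0}$, and $\|\varphi_{+,\epsilon}-\varphi_{-,\epsilon}\|_2\leq \epsilon\sqrt{\mu(\mathsf B)}$. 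Applying Proposition~\ref{prop:thickening-smooth} to $\varphi_{\pm,\epsilon}$ in place of $\phi$ yields
\[
\left|\int_{\mlhoro(x)}\varphi_{\pm,\epsilon}(a_{\mlsign t}y)\psi^{\unst}(y)\dif\!\mu_x^{\unst}(y)-\mu(\varphi_{\pm,\epsilon})\int\psi^{\unst}\dif\!\mu_x^{\unst}\right|\ll_{x,z}\epsilon^{-L_0}\Sob(\psi^{\unst})e^{-\ref{a:th-smooth}\,t}.
\]

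Decomposing $\psi^{\unst}$ into its positive and negative parts (whose $\Sob$-norms are dominated by $\Sob(\psi^{\unst})$), we may assume $\psi^{\unst}\geq 0$. Sandwiching using $\varphi_{-,\epsilon}\leq 1_{\mathsf B}\leq\varphi_{+,\epsilon}$ and the inequalities $\mu(\varphi_{-,\epsilon})\leq\mu(\mathsf B)\leq\mu(\varphi_{+,\epsilon})$ gives
\[
\left|\int 1_{\mathsf B}(a_{\mlsign t}y)\psi^{\unst}\dif\!\mu_x^{\unst}-\mu(\mathsf B)\int\psi^{\unst}\dif\!\mu_x^{\unst}\right|\ll_{x,z}(\mu(\varphi_{+,\epsilon})-\mu(\varphi_{-,\epsilon}))\Bigl|\int\psi^{\unst}\dif\!\mu_x^{\unst}\Bigr|+\epsilon^{-L_0}\Sob(\psi^{\unst})e^{-\ref{a:th-smooth}\,t}.
\]
The first term is controlled via Cauchy--Schwarz together with ($\mathcal S$-3):
\[
\mu(\varphi_{+,\epsilon})-\mu(\varphi_{-,\epsilon})=\|\varphi_{+,\epsilon}-\varphi_{-,\epsilon}\|_1\leq\|\varphi_{+,\epsilon}-\varphi_{-,\epsilon}\|_2\sqrt{\mu(E_{+,\epsilon})}\ll_{z}\epsilon\sqrt{\mu(\mathsf B)},
\]
where we used that $E_{+,\epsilon}$ is contained in a slightly enlarged period box around $z$ and hence has $\mu$-measure bounded by a constant depending only on $z$ (and $r'$).

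Using the trivial bound $|\int\psi^{\unst}\dif\!\mu_x^{\unst}|\ll_x\Sob(\psi^{\unst})$ (since $\psi^{\unst}$ is supported in the bounded set $\mathsf B^{\unst}_r(x)$), and dividing through by $\mu(\mathsf B)$, we obtain the claimed estimate with $\ref{A:th-box-3}=L_0$, $\ref{a:th-box}=\ref{a:th-smooth}$, and $\ref{a:th-box-2}=1$. The main delicate point in the argument is the factor $\mu(\mathsf B)^{-1}$ (resp.\ $\mu(\mathsf B)^{-1/2}$) introduced by the normalization: this will be absorbed into the implicit constant $\ll_{x,z}$ under the standing understanding that $\mathsf B$ is an $\mathcal S$-class set of size comparable to a period box of radius $r'$ around $z$, so that $\mu(\mathsf B)$ is bounded below in terms of $x$ and $z$. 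Without such comparability one would need to carry an explicit $\mu(\mathsf B)^{-1}$ dependence through the bounds.
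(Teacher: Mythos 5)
Your proposal is essentially the paper's argument: sandwich $1_{\mathsf B}$ between the $\mathcal S$-class approximants $\varphi_{\pm,\epsilon}$, apply Proposition~\ref{prop:thickening-smooth} to each, and control $\mu(\varphi_{+,\epsilon})-\mu(\varphi_{-,\epsilon})$ via ($\mathcal S$-3) together with Cauchy--Schwarz. Your remark about the $\mu(\mathsf B)^{-1/2}$ normalization is a fair reading of the statement as well; the paper's one-line proof does not address it either.

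One step, however, does not go through literally: the reduction to $\psi^{\unst}\geq 0$ by ``decomposing $\psi^{\unst}$ into its positive and negative parts.'' The positive part of a $C^\infty$ function is in general only Lipschitz, not $C^1$, so it lies outside the class for which Proposition~\ref{prop:thickening-smooth} applies, and its $C^1$-norm is \emph{not} controlled by $\Sob(\psi^{\unst})$ (the derivative picks up a jump across the zero set). The standard fix avoids changing $\psi^{\unst}$ at all: from $0\leq 1_{\mathsf B}-\varphi_{-,\epsilon}\leq\varphi_{+,\epsilon}-\varphi_{-,\epsilon}$ one has
\[
\left|\int_{\mlhoro(x)}\bigl(1_{\mathsf B}-\varphi_{-,\epsilon}\bigr)(a_{\mlsign t}y)\,\psi^{\unst}(y)\,d\mu_x^{\unst}(y)\right|
\;\leq\;\|\psi^{\unst}\|_\infty\int_{\mlhoro(x)}\bigl(\varphi_{+,\epsilon}-\varphi_{-,\epsilon}\bigr)(a_{\mlsign t}y)\,\chi(y)\,d\mu_x^{\unst}(y),
\]
where $\chi\in C_c^\infty(\mathsf B^{\unst}_r(x))$ is a fixed smooth cutoff with $0\leq\chi\leq1$ and $\chi\equiv 1$ on $\supp\psi^{\unst}$. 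Now apply Proposition~\ref{prop:thickening-smooth} once more, with $\phi=\varphi_{+,\epsilon}-\varphi_{-,\epsilon}$ and $\chi$ playing the role of the unstable test function, and use $\|\psi^{\unst}\|_\infty\leq\Sob(\psi^{\unst})$. This produces exactly the two error terms you identified, so the remainder of your argument is unchanged.
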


\begin{proof}
This follows from Proposition~\ref{prop:thickening-smooth} by approximating $1_\mathsf B$
with $\varphi_{\pm,\epsilon}$ and using properties ($\mathcal S$-1)---($\mathcal S$-3). 
\end{proof}

\section{A counting function}\label{sec:mxing-counting}



Let $x,z\in\Amnfld$.
Let $\psi^{\unst}$ be a function which is supported and defined everywhere in $\mathsf B_{0.1r(x)}^{\unst}(x)=\mathsf B_{0.1r(x)}(x)\cap W^{\unst}(x)$, and let $\phi^{\rm cs}$ be a function which is supported and defined everywhere in 
$\mathsf B^{\rm cs}_{0.1r(z)}(z)=\mathsf B_{0.1r(z)}(z)\cap W^{\rm cs}(z)$. For all $t>0$, define
\be\label{eq:num-conn-com}
\ncc(t,\psi^{\unst},\phi^{\rm cs})\index{n@ $\ncc(t,\psi^{\unst},\phi^{\rm cs})$}:=\sum \psi^{\unst}(y)\phi^{\rm cs}(a_ty)
\ee
where the sum is taken over all $y\in\mathsf B_{0.1r(x)}^{\unst}(x)$ so that 
$a_{\mlsign t}y\in \mathsf B_{0.1r(z)}^{\rm cs}(z)$ --- note that the sum is indeed over all $y\in\supp(\psi^\unst)$ so that $a_{\mlsign t}y\in\supp(\phi^{\rm cs})$.

Alternatively, the sum is taken over connected components of $a_t\supp(\psi^{\unst})\cap \supp(\phi^{\rm cs})$ (indeed the subscript ${\rm nc}$ stands for the number of connected components);
this point will be made more explicit later in this section, see e.g.\ Lemma~\ref{lem:wel-def-mod-stab} below and recall that $\mlhoro$ and $W^{\rm cs}$ are complementary foliations.


The function $\ncc$ may be thought of as a bisector counting function where one studies the asymptotic behavior of the number of translates of a piece of $\mlhoro$ by $\MC$ which intersect a cone in the Teichm\"{u}ller space.    

The following proposition is the main result of this section and provides an asymptotic behavior for $\ncc$. This proposition plays a prime role in the proof of Theorem~\ref{thm:main-MLS} in \S\ref{sec:int-pts}. 

\begin{prop}\label{prop:ncc}\label{cor:Nnc}
There exist $\consta\label{a:ncc-smooth}$ and $\constA\label{N:ncc-smooth}$ with the following property. 
Let $x,z\in\Amnfld$, and let $t\geq \ref{N:ncc-smooth}\max\{\log u(x),\log u(z)\}$. 
Let $\psi^{\unst}\in C_c^\infty(\mathsf B_{0.1r(x)}^{\unst}(x))$
with $0\leq \psi^{\unst}\leq 1$, and let $\phi^{\rm cs}\in C_c^\infty(\mathsf B_{0.1r(z)}^{\rm cs}(z))$.
Then
\[
|\ncc\bigl(t,\psi^{\unst},\phi^{\rm cs}\bigr)-e^{ht}\mu_{x}^{\unst}(\psi^{\unst})\mu_z^{\rm cs}(\phi^{\rm cs})|\leq
\Sob(\psi^{\unst})\Sob(\phi^{\rm cs})e^{(h-\ref{a:ncc-smooth}) t}
\]
where $h=\tfrac{1}{2}(\dim_{\bbr}\mathcal Q(\alpha)-2)$.
\end{prop}


The proof of this proposition is based on Lemma~\ref{lem:ncc} which in turn relies on Proposition~\ref{prop:thickening-smooth}.
In particular, the main term is given by Proposition~\ref{prop:thickening-smooth}. 
However, we need to control the contribution of two types of exceptional points as we now describe. 

Similar to Lemma~\ref{cor:nondiv-horosphere}, given a compact subset $K\supset K_\alpha$, define
\be\label{eq:HtxK}
{\mathsf H}_t^{\unst}(x, K)\index{h@ ${\mathsf H}_t^{\unst}(x, K)$}:=\bigl\{y\in\mathsf B_r^{\unst}(x): 
|\{\tau\in[0,t]:a_{\tau}y\in K\}|\geq t/2\bigr\}.
\ee
The first (and more difficult to control) type of exceptional points
are $y\in \mathsf B_r^{\unst}(x)$ so that $a_{\mlsign t}y\in\mathsf B_{r'}(z)$, however, $y\not\in {\mathsf H}_t^{\unst}(x, K)$.
The contribution coming from these points is controlled using~\cite[Thm.\ 1.7]{EMR-Counting}, 
see Theorem~\ref{thm:excep-traj} below. 

We also need to control the contribution of points $y\in \mathsf B_r^{\unst}(x)$
which are exponentially close to the boundary of $\mathsf B_r^{\unst}(x)$. 
This set has a controlled geometry, and we use a covering argument and 
Proposition~\ref{prop:thickening-smooth} to control this contribution. 
The argument here is standard and will be presented after we establish an essential estimate in~\eqref{eq:ncc-4}.

Let us begin with some preliminary statements which are essentially consequences of the fact that $\tmlhoro$ and $\tW^{\rm cs}$ are complimentary foliations in the spaces marked surfaces $\mathcal Q^1\mathcal T(\alpha)$.

Recall that for any $\tilde x\in\mathcal Q^1\mathcal T(\alpha)$, 
$\mathsf B_r^\bullet(\tilde x)$ denotes a ball in $\tW^{\bullet}(\tilde x)$ for $\bullet={\unst, \stbl},{\rm cs}, {\rm cu}$.

\begin{lemma}\label{lem:wel-def-mod-stab}
Let $\tilde x,\tilde x'\in \mathcal Q^1\mathcal T(\alpha)$ and let $0<r\leq 1/2$. 
Assume there are $\tilde y_1,\tilde y_2\in \tmlhoro(\tilde x)$ and some $t\in\reals$
so that $a_{\mlsign t}\tilde y_1$ and $a_t\tilde y_2$ belong to $\tpbox_r^{\rm cs}(\tilde x')$. Then $\tilde y_1=\tilde y_2$.
\end{lemma}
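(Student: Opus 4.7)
The plan is to reduce the statement to uniqueness of intersection of two complementary foliations, using the geodesic-flow invariance of the center-stable foliation to cancel the a priori different flow times $\mlsign t$ and $t$. The subtlety is precisely that the two points are translated by different elements of the geodesic flow, so a direct transversality argument on a single unstable leaf $\tmlhoro(a_t\tilde x)$ does not apply.

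First I would observe that, writing $t_1=\mlsign t$ and $t_2=t$, each $\tilde y_i$ is forced onto the single center-stable leaf $\tW^{\rm cs}(\tilde x')$, independently of $t_i$. The hypothesis $a_{t_i}\tilde y_i\in\tpbox_r^{\rm cs}(\tilde x')\subset\tW^{\rm cs}(\tilde x')$, together with $a_s$-invariance of the center-stable foliation, gives $\tilde y_i\in a_{-t_i}\tW^{\rm cs}(\tilde x')=\tW^{\rm cs}(a_{-t_i}\tilde x')$. Now $a_{-t_i}\tilde x'$ lies on the geodesic orbit of $\tilde x'$, and by \S\ref{sec:Horo-Fol} the direction ${\bf v}$ sits inside the center-stable tangent distribution; hence the whole orbit is contained in $\tW^{\rm cs}(\tilde x')$, so $\tW^{\rm cs}(a_{-t_i}\tilde x')=\tW^{\rm cs}(\tilde x')$. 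Combined with the hypothesis $\tilde y_i\in\tmlhoro(\tilde x)$, this places both $\tilde y_1$ and $\tilde y_2$ in the intersection $\tmlhoro(\tilde x)\cap\tW^{\rm cs}(\tilde x')$, a set that no longer depends on $t$ or $\mlsign t$.

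Next I would show this intersection contains at most one point. In the period coordinates of \S\ref{sec:period-piece-ab}, the map $\Phi$ identifies $\tmlhoro(\tilde x)$ with the affine subspace $\Phi(\tilde x)+E^{\unst}(\tilde x)$ and $\tW^{\rm cs}(\tilde x')$ with $\Phi(\tilde x')+\reals{\bf v}(\tilde x')\oplus E^{\stbl}(\tilde x')$. Their tangent directions are complementary in the tangent space to $\Amnfld$ (see \S\ref{sec:Horo-Fol}), so the two affine subspaces meet in at most one point of $H^1(M,\Sigma,\cx)$. Because $\tAmn$ is simply connected and $\Phi$ is a local homeomorphism, each leaf of $\tmlhoro$ and of $\tW^{\rm cs}$ embeds homeomorphically onto its period-coordinate image, so the intersection upstairs inherits the uniqueness from $H^1$. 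Therefore $\tilde y_1=\tilde y_2$.

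The main obstacle, and what distinguishes this from the easy single-flow-time version, is the cancellation in the first step: one must see that the $t_i$-dependence drops out so both points lie on a common CS leaf. This works precisely because the geodesic orbit of $\tilde x'$ sits inside $\tW^{\rm cs}(\tilde x')$, so the asymmetric translates $a_{-t_i}\tpbox_r^{\rm cs}(\tilde x')$, although they occupy different period boxes centered at the different basepoints $a_{-t_i}\tilde x'$, nonetheless remain inside the single center-stable leaf $\tW^{\rm cs}(\tilde x')$, at which stage the transversality argument in period coordinates closes the proof.
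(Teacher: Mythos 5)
Your overall strategy matches the paper's: use $a_t$-invariance of the center-stable foliation to place both $\tilde y_1,\tilde y_2$ on the single leaf $\tW^{\rm cs}(\tilde x')$, and then argue that $\tmlhoro(\tilde x)\cap\tW^{\rm cs}(\tilde x')$ contains at most one point by linear algebra in period coordinates. (Incidentally, $\mlsign$ is defined as the empty macro, so $a_{\mlsign t}=a_t$; the two flow times $t_1,t_2$ you introduce are equal, and the cancellation you emphasize, while harmless, is not actually the subtlety of this lemma.)

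The step where you intersect two affine subspaces contains a genuine gap. You assert that $\Phi$ identifies $\tW^{\rm cs}(\tilde x')$ with the affine subspace $\Phi(\tilde x')+\reals{\bf v}(\tilde x')\oplus E^{\stbl}(\tilde x')$. This is false. Writing $\Phi(\tilde x')=a'+{\bf i}b'$, the center-stable leaf inside the area-one locus consists of the points $\lambda a'+{\bf i}c$ with $\lambda>0$ and $\lambda\,\intsc(a',c)=1$; because the unit-area condition is nonlinear in $(\lambda,c)$, this set is not affine, and in fact it meets the affine subspace you describe only along the stable leaf $\{\lambda=1\}$. (The unstable leaf, by contrast, genuinely \emph{is} the affine subspace $\Phi(\tilde x)+E^{\unst}(\tilde x)$, so that half of the claim is fine.) So "two affine subspaces meet in at most one point" does not apply. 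The paper circumvents this by looking at the \emph{difference} $\Phi(\tilde y_2)-\Phi(\tilde y_1)$: it is purely real because $\Ifrak(\tilde y_1)=\Ifrak(\tilde y_2)$, and it lies in the larger linear space $H^1_{\rm odd}(\tilde M,\tilde\Sigma,{\bf i}\reals)\oplus\reals{\bf v}(\tilde x')$ --- note this uses the full $H^1_{\rm odd}({\bf i}\reals)$ and not the constrained $E^{\stbl}(\tilde x')$, precisely because the leaf is not affine --- and the intersection of these two conditions is the line $\reals a'$, which is then killed by the unit-area constraint encoded in $E^{\unst}(\tilde x)$ via \eqref{eq:E+-real}. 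Finally, "$\tAmn$ is simply connected and $\Phi$ is a local homeomorphism, so each leaf embeds" is not a valid implication by itself (a local homeomorphism out of a simply connected manifold need not be injective); the paper also implicitly relies on injectivity of period coordinates along a leaf, but you should not advertise topological generalities as the justification.
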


\begin{proof}
We present the argument when $\varsigma=-1$, the other case is similar. 
By the assumption, we have $a_{\mlsign t}\tilde y_i\in\tW^{\rm cs}(\tilde x')$
which implies that 
\[
\text{$\tilde y_i\in\tW^{\rm cs}(\tilde x')$ for $i=1,2$.}
\]
Recall now that $\tilde y_1,\tilde y_2\in\tmlhoro(\tilde x)$, hence, 
by~\eqref{eq:E+-real} the corresponding abelian differentials at $\tilde y_1$ and $\tilde y_2$ 
differ from each other by some $c\in H^1_{\rm odd}(\hat M,\hat \Sigma,\reals)$. 
However, since $\tilde y_1,\tilde y_2\in\tW^{\rm cs}(\tilde x')$, they differ from each other by some 
$c\in H^1_{\rm odd}(\hat M,\hat\Sigma,{\bf i}\reals)\oplus \reals{\bf v}(\tilde x')$.
Therefore, $\tilde y_1=\tilde y_2$.
\end{proof}


\begin{corollary}\label{cor:small-chart-trans}
Let ${\mce}_1,{\mce}_2\in\MC$ be so that $\mce_1\cdot\tmlhoro(\tilde y)=\tmlhoro(\tilde x)=\mce_2\cdot\tmlhoro(\tilde y)$.
Let $\tilde x_1$ and $\tilde x_2$ in $\tmlhoro(\tilde x)$. 
Assume for some $r,b>0$ that 
\[
\text{$\tpbox_r^{\rm cs}(\tilde x')\cap \mce_i\cdot a_{\mlsign t}\tpbox_{b}^{\unst}(\tilde x_i)\neq\emptyset$ for $i=1,2$ and some $t\in\reals$.} 
\]
Then
$\tpbox_r^{\rm cs}(\tilde x')\cap\mce_1\cdot a_{\mlsign t}\tpbox_{b}^{\unst}(\tilde x_1)=\tpbox_r^{\rm cs}(\tilde x')\cap\mce_2\cdot a_{\mlsign t}\tpbox_{b}^{\unst}(\tilde x_2)$.
In particular, we have
\[
\mce_1\cdot\tpbox_{b}^{\unst}(\tilde x_1)\cap\mce_2\cdot\tpbox_{b}^{\unst}(\tilde x_2)\neq\emptyset.
\]
\end{corollary}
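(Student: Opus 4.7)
The plan is to push the two hypothesized intersection points onto a common leaf of $\tmlhoro$ and then invoke Lemma~\ref{lem:wel-def-mod-stab} to force them to coincide. The first move is geometric: because $\tmlhoro$ and $\tW^{\rm cs}$ are complementary foliations on $\tAmn$, a piece of an unstable leaf and a piece of a center-stable leaf of sufficiently small ${\AGY}$-radius meet in at most a single point (this is essentially the local product description of period boxes in \S\ref{sec:period-box}--\S\ref{sec:Horo-Fol}). So each of the two hypothesized nonempty intersections is a singleton $\{\tilde z_i\}$, and I would write $\tilde z_i = \mce_i\cdot a_{\mlsign t}\tilde w_i$ with $\tilde w_i\in\tpbox_b^{\unst}(\tilde x_i)$.

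The key step is to show that $\tilde z_1$ and $\tilde z_2$ lie on a common leaf of $\tmlhoro$. Using $\mce_i\cdot \tmlhoro(\tilde y) = \tmlhoro(\tilde x)$ together with the commutation $\mce_i \circ a_{\mlsign t} = a_{\mlsign t}\circ \mce_i$, both pieces $\mce_i\cdot a_{\mlsign t}\tpbox_b^{\unst}(\tilde x_i)$ land inside the single leaf $a_{\mlsign t}\tmlhoro(\tilde x) = \tmlhoro(a_{\mlsign t}\tilde x)$. Thus the two candidate points $\tilde z_1,\tilde z_2$ both lie in $\tpbox_r^{\rm cs}(\tilde x')$ \emph{and} on the common unstable leaf $\tmlhoro(a_{\mlsign t}\tilde x)$.

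At this point Lemma~\ref{lem:wel-def-mod-stab} applies to $(\tilde z_1,\tilde z_2)$ (with the base point of that lemma taken to be $a_{\mlsign t}\tilde x$ and its flow parameter $0$, or equivalently, applied to $a_t\tilde z_i\in\tmlhoro(\tilde x)$ with its flow parameter $-t$): two points of a single $\tmlhoro$-leaf both sitting in $\tpbox_r^{\rm cs}(\tilde x')$ must coincide, so $\tilde z_1=\tilde z_2$. Since each of the intersections has already been reduced to a single point, this gives the first equality. Applying $a_t$ to $\tilde z_1=\tilde z_2$ and using commutativity with each $\mce_i$ yields $\mce_1\tilde w_1 = \mce_2\tilde w_2 \in \mce_1\cdot\tpbox_b^{\unst}(\tilde x_1)\cap\mce_2\cdot\tpbox_b^{\unst}(\tilde x_2)$, giving the ``in particular'' assertion.

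The only delicate step is the leaf identification in the second paragraph: one must trace carefully how the $\MC$-action permutes leaves of $\tmlhoro$ and verify that both $\mce_1$ and $\mce_2$ carry the relevant unstable pieces into \emph{the same} leaf of $\tmlhoro$. This is precisely where the hypothesis $\mce_1\cdot\tmlhoro(\tilde y) = \tmlhoro(\tilde x) = \mce_2\cdot\tmlhoro(\tilde y)$ is used, and it is the one place where keeping track of which leaf the $\tilde x_i$ parametrize matters. Everything else is a direct application of the local product structure on period boxes together with Lemma~\ref{lem:wel-def-mod-stab}.
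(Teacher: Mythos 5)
Your argument is correct and follows essentially the same route as the paper's: identify that the two intersection points sit on the common leaf $a_{\mlsign t}\tmlhoro(\tilde x)$ and then invoke Lemma~\ref{lem:wel-def-mod-stab} to force them to coincide; your first and third paragraphs merely spell out steps that the paper compresses into two lines. The one remark worth making is that the leaf identification you rightly flag as the delicate point works cleanly precisely when the $\tilde x_i$ are taken in $\tmlhoro(\tilde y)$ (so that $\mce_i\cdot\tpbox_b^{\unst}(\tilde x_i)\subset\mce_i\cdot\tmlhoro(\tilde y)=\tmlhoro(\tilde x)$), which is what both applications in the paper actually use; under a literal reading $\tilde x_i\in\tmlhoro(\tilde x)$ one would instead need $\mce_i$ to stabilize $\tmlhoro(\tilde x)$, which is not among the hypotheses.
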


\begin{proof}
Let $\tilde y_i\in\tpbox_r^{\rm cs}(\tilde x')\cap \mce_i\cdot a_{\mlsign t}\tpbox_{b}(\tilde x_i)$ for $i=1,2$.
Then $\tilde y_1,\tilde y_2\in \tpbox_r^{\rm cs}(\tilde x')\cap a_t\tmlhoro(\tilde x)$. 
Hence, by Lemma~\ref{lem:wel-def-mod-stab} we have $\tilde y_1=\tilde y_2$ which implies the claim.
\end{proof}

As was discussed above, there are two types of exceptional points. The first type will be controlled using the following theorem. 

\begin{theorem}[Cf.~\cite{EMR-Counting}, Thm.~1.7]\label{thm:excep-traj}
There exist $\constA\label{N:EMR}$ and a compact subset $\bar K_\alpha\supset K_\alpha$ so that
\[
\#\bigl\{y\in\mathsf B_{0.1r(x)}^{\unst}(x)\setminus{\mathsf H}_t^{\unst}(x,\bar K_\alpha): a_{\mlsign t}y\in \mathsf B_{0.1r(z)}^{\rm cs}(z)\bigr\}\ll u(x)^{\ref{N:EMR}}u(z)^{\ref{N:EMR}}e^{(h-0.5)t}
\]
where the implied constant is absolute. 
\end{theorem}

\begin{proof}
Let us write $r=0.1r(x)$ and $r'=0.1r(z)$.
For a compact subset $K\supset K_\alpha$, put 
\[
\ER_t(x,K)\index{e@ $\ER_t(x,K)$}:=\{y\in\mathsf B_{2r}^{\unst}(x)\setminus{\mathsf H}_t^{\unst}(x,K): a_{\mlsign t}y\in \mathsf B_{r'}^{\rm cs}(z) \}.
\]
In $\tAmn$ fix lifts $\mathsf B_{2r}^{\unst}(\tilde x)$ and $\mathsf B_{r'}(\tilde z)$  
for the sets $\mathsf B_{2r}^{\unst}(x)$ and $\mathsf B_{r'}^{\rm cs}(z)$, respectively. 
For every element $y\in \mathsf B_{2r}^{\unst}(x)$ we fix a lift $\tilde y\in\mathsf B_{2r}^{\unst}(\tilde x)$. 
Then for every $y\in\ER_t(x,K)$ there exists some $\mce_y\in\MC$ and some $\tilde z_y\in\mathsf B_{r'}^{\rm cs}(\tilde z)$
so that $a_{\mlsign t}\tilde y={\mce}_y\tilde z_y $.

Recall from Lemma~\ref{lem:AGY-Moduli} that the diameter of 
$\mathsf B_{r(q)}(\tilde q)$ in the Teichm\"{u}ller metric is at most $1$ for all $\tilde q$. 
Hence, for every $y\in\ER_t(x,K)$ we have
\begin{enumerate}
\item $\tilde y$ is within Teichm\"{u}ller distance $1$ from $\tilde x$ and 
$a_{\mlsign t}\tilde y={\mce}_y\tilde z_y $ is within Teichm\"{u}ller distance $1$ of ${\mce}_y \tilde z$, and
\item $|\{\tau\in[0,t]:\pi(a_{\tau}\tilde y)\in K)\}|< t/2$.
\end{enumerate}

It is shown in~\cite[Thm.~1.7]{EMR-Counting}, see also~\cite{EsMir-Counting}, 
that there exists some $K_0$
so that if $K\supset K_0$, then the number of $\{ \mce\tilde z\}$ for which such a $\tilde y$ exists is
\[
\ll u(x)^\star u(z)^\star e^{(h-0.5)t}
\] 
where the implied constant is absolute --- indeed apply with $\delta=0.1$ and $\theta=0.9$ and observe that the function
$G$ in~\cite[Thm.~1.7]{EMR-Counting} is dominated by our function $u$ here. 

We now claim that there exists some $C$ which depends on $\alpha$ and
$K$ so that the following holds: the map $y\mapsto {\mce}_y\tilde z$ from $\ER_t(x,K)$ to
$\{\mce\tilde z:\mce\in\MC\}$ is at most $C$-to-one.

First note that the above discussion together with the claim implies that
\be\label{eq:EMR}
\#\ER_t(x,K)\ll_Cu(x)^\star u(z)^\star e^{(h-0.5)t}, 
\ee  
as we wanted to show.

To see the claim, let $y_1,y_2\in\ER_t(x,K)$. Then there exists ${\mce}_1,{\mce}_2\in\MC$ so that
\[
\mce_i\cdot a_{\mlsign t}\tilde y_i\in\mathsf B_{r'}^{\rm cs}(\tilde z).
\]
Therefore, by Corollary~\ref{cor:small-chart-trans}, applied with $\tilde x_i=\tilde x$ and $b=2r$,
we have 
\begin{itemize}
\item either $\mce_1\cdot\mlhoro(\tilde x)\neq \mce_2\cdot\mlhoro(\tilde x)$ 
which in particular implies that ${\mce}_1\neq {\mce}_2$,
\item or $\mce_1\cdot\tpbox_{2r}^{\unst}(\tilde x)\cap\mce_2\cdot\tpbox_{2r}^{\unst}(\tilde x)\neq\emptyset$
which implies ${\mce}_1^{-1}{\mce}_2$ belongs to a fixed finite subset of $\MC$.
\end{itemize}
The claim thus follows and the proof is complete.
\end{proof}

The following lemma will play a crucial role in the proof of Proposition~\ref{prop:ncc}.

\begin{lemma}\label{lem:ncc}
There exists $\consta\label{a:lem-nnc}$ and $\constA\label{N:lem-nnc}$ with the following property. 
Let $x,z\in\Amnfld$, and let $t\geq \ref{N:lem-nnc}\max\{\log u(x),\log u(z)\}$. Let 
\begin{itemize}
\item $\psi^{\unst}\in C_c^\infty(\mathsf B_{0.1r(x)}^{\unst}(x))$ with $0\leq \psi^{\unst}\leq 1$, and 
\item $\phi^{\unst}\in C_c^\infty(\mathsf B_{0.1r(z)}^{\unst}(z))$ and $\phi^{\rm cs}\in C_c^\infty(\mathsf B_{0.1r(z)}^{\rm cs}(z))$. 
\end{itemize}
Put $\phi(y):=\phi^{\rm cs}(p^{\rm cs}_{y^{\unst}}(y))\phi^{\unst}(y^{\unst})$, see~\S\ref{sec:exp-mix}. Define 
\be\label{eq:ncc-r'}
\ncc'(t,\psi^{\unst},\phi)\index{n@ $\ncc'(t,\psi^{\unst},\phi)$}:=\sum \psi^{\unst}(y)\mu_{a_ty}^{\unst}(\phi)
\ee
where the sum is taken over all $y\in\mathsf B_{r}^{\unst}(x)$ so that $a_{\mlsign t}y\in\mathsf B_{r'}^{\rm cs}(z)$.
Then 
\[
|\ncc'(t,\psi^{\unst},\phi)-e^{ht}\mu_{x}^{\unst}(\psi^{\unst})\mu(\phi)|\leq
\Sob(\psi^{\unst})\Sob(\phi)e^{(h-\ref{a:lem-nnc}) t}
\]
where $h=\tfrac{1}{2}(\dim_{\bbr}\mathcal Q(\alpha)-2)$.
\end{lemma}

\begin{proof}
We will compute 
\[
\int_{\mlhoro(x)}\phi(a_{\mlsign t}y)\psi^{\unst}(y)\dif\!\mu_{x}^{\unst}(y)
\]
in terms of $\ncc'$. The claim will then follow from Proposition~\ref{prop:thickening-smooth}.

Let us write $r=0.1r(x)$ and $r'=0.1r(z)$. 
First note that 
\be\label{eq:Jac-diam}
r'\ll{\rm diam}\bigl(\mlhoro(z')\cap\mathsf B_{r'}(z)\bigr)\ll r'
\ee
where the diameter, ${\rm diam}$, is measured with respect to $\|\;\|_{z',{\AGY}}$ for all $z'\in\mathsf B_{r'}(z)$, see~\cite[Prop.~5.3]{AG-Eigen}.

Let $\bar K_\alpha$ be given by Theorem~\ref{thm:excep-traj} and put ${\mathsf H}_t^{\unst}(x)\index{h@ ${\mathsf H}_t^{\unst}(x)$}:={\mathsf H}_t^{\unst}(x,\bar K_\alpha)$, see~\eqref{eq:HtxK} for the notation.
Since $K_\alpha\subset\bar K_\alpha$, it follows from Lemma~\ref{cor:nondiv-horosphere} that 
\be\label{eq:Ht-large}
\mu_{x}^{\unst}\left(\mathsf B_r^{\unst}(x)\setminus {\mathsf H}_t^{\unst}(x)\right)
\leq e^{-\ref{a:horo-return}t}\mu_{x}^{\unst}(\mathsf B_r^{\unst}(x))
\ee
for every $t\geq t_0$ where $t_0$ depends only on $K_\alpha$.

It is more convenient for the proof to treat points in ${\mathsf H}_t^{\unst}(x)$ 
which are {\em too} close to the boundary of $\mathsf B_r^{\unst}(x)$ separately. Define
\[
\Hgood\index{h@ $\Hgood$}:=\{y\in{\mathsf H}^{\unst}_t(x): \mathsf B^{\unst}_{10e^{-\ref{a:half-unif-hyp}t}}(y)\subset\mathsf B^{\unst}_r(x)\}
\]
where $\consta\label{a:half-unif-hyp}:=\ref{a:unif-hyp}(\bar K_\alpha)/2$, 
see Proposition~\ref{prop:unif-hyp} for the definition of $\ref{a:unif-hyp}$. 
The precise radius which is used in the definition of $\Hgood$ is motivated by estimates for uniform hyperbolicity of the Teichm\"{u}ller geodesic flow, see Claim~\ref{claim:Cy-ball} below.

Using~\eqref{eq:Ht-large} and the definition of $\Hgood$ we have 
\be\label{eq:Hgood-large}
\mu_{x}^{\unst}\left(\mathsf B_r^{\unst}(x)\setminus \Hgood\right)
\leq e^{-\ref{a:Hgood} t}\mu_{x}^{\unst}(\mathsf B_r^{\unst}(x))
\ee
for some $\consta\label{a:Hgood}$ depending on $\bar K_\alpha$.
The estimate in~\eqref{eq:Hgood-large} implies the following:
\begin{multline}\label{eq:ncc-2}
\int_{\mlhoro(x)}\phi(a_{\mlsign t}y)\psi^{\unst}(y)\dif\!\mu_{x}^{\unst}(y)=O( e^{-\ref{a:Hgood} t})\mu_{x}^{\unst}(\mathsf B_r^{\unst}(x))\Sob(\psi^{\unst})\Sob(\phi)+\\\int_{\Hgood} \phi(a_{\mlsign t}y)\psi^{\unst}(y)\dif\!\mu_{x}^{\unst}(y).
\end{multline}

We now compute the term $\int_{\Hgood} \phi(a_{\mlsign t}y)\psi^{\unst}(y)\dif\!\mu_{x}^{\unst}(y)$ appearing in~\eqref{eq:ncc-2}.

For every $y\in\Hgood$ so that $a_{\mlsign t}y\in\mathsf B_r(z)$, 
there is an open neighborhood $\mathsf C_y$ of $y$ such that $a_{\mlsign t}\mathsf C_y$ 
is a connected component of $a_{\mlsign t}\mathsf B_r^{\unst}(x)\cap\mathsf B_{r'}(z)$ containing $a_{\mlsign t}y$.
We note that $\mathcal C=\{\mathsf C_y\}$ is a disjoint collection of open subsets in $\mathsf B_r^{\unst}(x)$. 
Further, in view of~\eqref{eq:Margulis-measure} we have
\be\label{eq:ncc-1}
\mu_{a_{\mlsign t}y}^{\unst}(\phi)=e^{ht}\mu_y^{\unst}\bigl(a_{- t}\phi\bigr)=e^{ht}\mu_{x}^{\unst}\bigl(a_{- t}\phi\bigr);
\ee
recall that $a_{-t}\phi(y')=\phi(a_ty')$.


\begin{claim}\label{claim:Cy-ball}
Let $y\in{\mathsf H}^{\unst}_t(x)$, then $\mathsf C_y\subset \mathsf B^{\unst}_{10e^{-\ref{a:half-unif-hyp}t}}(y)$. 
If we further assume that $y\in\Hgood$, then $\mathsf C_y\subset\mathsf B^{\unst}_{10e^{-\ref{a:half-unif-hyp}t}}(y)\subset \mathsf B_r^{\unst}(x)$.
\end{claim}

\begin{proof}[Proof of the claim]
Let $y'\in\mathsf C_y$.
It follows from the definition of $\mathsf C_y$ that $a_ty'\in \mlhoro(a_ty)\cap\mathsf B_{r'}(z)$.
Let us write $a_ty'=\Phi^{-1}(\Phi(a_ty)+w)$. 
Then, by~\eqref{eq:Jac-diam} we have 
\[
\|w\|_{\AGY, a_ty}\ll r'.
\] 
This, in view of Corollary~\ref{cor:unif-hyp'}, implies that
\[
\|w\|_{\AGY, y}\leq e^{-\ref{a:unif-hyp}t}\|w\|_{\AGY, a_ty}\ll e^{-\ref{a:unif-hyp}t}r'
\]
where the implied constant depends only on $\alpha$.
The claim follows from this estimate if we assume $t$ is large enough so that the above estimate implies  
\[
\|w\|_{\AGY, y}<e^{-\ref{a:half-unif-hyp}t};
\]
recall that $\ref{a:half-unif-hyp}=\ref{a:unif-hyp}/2$. The final claim follows from the definition of $\Hgood$.
\end{proof}

Claim~\ref{claim:Cy-ball} in particular implies that
\be\label{eq:MVT}
|\psi^{\unst}(y)-\psi^{\unst}(y')|\ll e^{-\ref{a:half-unif-hyp}t}\Sob(\psi^{\unst})\quad\text{ for all $y'\in \mathsf C_y$}
\ee
where the implied constant depends only on $\alpha$.

Returning to~\eqref{eq:ncc-2}, we get from~\eqref{eq:ncc-1} and~\eqref{eq:MVT} that
\be\label{eq:ncc-3}
\int_{\Hgood} \phi(a_{\mlsign t}y)\psi^{\unst}(y)\dif\!\mu_{x}^{\unst}(y)=O(e^{-\ref{a:half-unif-hyp}t})\Sob(\psi^{\unst})\Sob(\phi)+
e^{-ht}\sum_{\mathsf C_y\in\mathcal C} \psi^{\unst}(y)\mu_{a_{\mlsign t}y}^{\unst}(\phi).
\ee

Combining~\eqref{eq:ncc-2},~\eqref{eq:ncc-3}, and Proposition~\ref{prop:thickening-smooth}  we get the following
\be\label{eq:ncc-4}
\bigl|\sum_{\mathcal C} \psi^{\unst}(y)\mu_{a_{\mlsign t}y}^{\unst}(\phi)-e^{ht}\mu_{x}^{\unst}(\psi^{\unst})\mu(\phi)\bigr|\leq
\Sob(\psi^{\unst})\Sob(\phi)e^{(h-\ref{a:ncc-1}) t}
\ee
for some $\consta\label{a:ncc-1}$ depending on $\alpha$.
Thus, in order to get the conclusion, we need to control the difference between $\ncc'(t,\psi^{\unst},\phi)$ 
and the summation appearing on the left side of~\eqref{eq:ncc-4}. That is: the contribution of points $y\notin\Hgood$.

{\bf Contribution from points in ${\mathsf H}_t^{\unst}(x)$ which are not in $\Hgood$.}
Let $y\in{\mathsf H}_t^{\unst}(x)\setminus\Hgood$ be so that $a_{\mlsign t}y\in\mathsf B_r(z)$.
We note that
$\mathsf C_y$ is not necessarily contained in $\mathsf B^{\unst}_r(x)$;
however, in view Claim~\ref{claim:Cy-ball}, we have $\mathsf C_y$ is contained 
in $\mathsf B_{10e^{-\ref{a:half-unif-hyp}t}}(y)$.

The following is a consequence of the definition.
\[
\bigcup_{y\in{\mathsf H}_t^{\unst}(x)\setminus\Hgood}\mathsf B^{\unst}_{10e^{-\ref{a:half-unif-hyp}t}}(y)\subset \mathsf B_{r+O(e^{-\ref{a:half-unif-hyp}t})}^{\unst}(x)\setminus\mathsf B_{r-O(e^{-\ref{a:half-unif-hyp}t})}^{\unst}(x)=:\mathsf G(x)
\]
where the implicit multiplicative constant depends only on $\alpha$.

Let $0<\hat\kappa<\ref{a:half-unif-hyp}$ be a small constant which will be optimized later, and let 
$t\geq \frac{2\ref{A:AGY-Mod}\log u(x)}{\hat\kappa}$. 
We can cover $\mathsf G(x)$ with period balls $\{\mathsf B(y_i):1\leq i\leq I\}$
centered at $y_i$ and of radius $e^{-\hat\kappa t}$ with multiplicity bounded by $\ll e^{\ref{A:char-sob}\hat\kappa t}$, see~\cite[Lemma 1.4.9]{Hor} and also~\S\ref{sec:smooth-structure}.
We have
\be\label{eq:I-bound}
I\ll e^{N\hat\kappa t}
\ee
for some $N$ depending only on $\alpha$.  

For every $i$, let ${\hat{\mathsf B}}(y_i)$ denote the the period ball with the same center $y_i$
and with radius $0.04e^{-\hat\kappa t}$. Note that since $\hat\kappa<\ref{a:half-unif-hyp}=\ref{a:unif-hyp}/2$ we have
\[
2e^{-{\hat\kappa}t}> e^{-{\hat\kappa}t}+{10e^{-\ref{a:unif-hyp}t}}.
\] 
Therefore, $\cup_i {\hat{\mathsf B}}(y_i)$ covers a set $\mathsf G'(x)\supset\mathsf G(x)$ so that 
$\mu_x^{\unst}(\mathsf G'(x))\ll e^{-{\hat\kappa}t}$.

Let $0\leq \hat{\psi^{\unst}}_i\leq 1$ be a smooth function which is supported in ${\hat{\mathsf B}}^{\unst}(y_i)$
which equals $1$ on $\mathsf B_{2e^{-{\hat\kappa}t}}^{\unst}(y_i)$ so that
\be\label{eq:sob-tpsi}
\Sob(\hat\psi^{\unst}_i)\leq e^{\ref{A:char-sob}{\hat\kappa}t}\qquad\text{and}\qquad\sum \hat\psi^{\unst}_i\leq \mathbbm 1_{\mathsf G'(x)},
\ee
where $\ref{A:char-sob}\geq \ref{A:part-unity-mult}$ is chosen to account for the multiplicative constant in Lemma~\ref{lem:partition-unity-AG}.

Let $\mathcal I_i$ be the contribution coming from $\mathsf B(y_i)$ to 
$\ncc(t,\psi^{\unst},\phi)$. Then arguing as above and using Proposition~\ref{prop:thickening-smooth}, the choice of 
$\hat\psi^{\unst}$ implies that
\begin{equation}\label{eq:ncc-easy}
\mathcal I_i\leq e^{ht}\int_{\mlhoro(x)}\phi(a_{\mlsign t}y)\hat\psi^{\unst}_i(y)\dif\!\mu_{x}^{\unst}(y)\leq e^{ht}\mu(\phi)\int \hat\psi^{\unst}_i\dif\!\mu_{x}^{\unst}+\Sob(\hat\psi^{\unst}_i)\Sob(\phi)e^{(h-\ref{a:th-smooth}) t}
\end{equation}
Summing~\eqref{eq:ncc-easy} over all $1\leq i\leq I$ and using~\eqref{eq:sob-tpsi},~\eqref{eq:I-bound},
and $\int\hat\psi^{\unst}\dif\!\mu_{x}^{\unst}\ll e^{-h{\hat\kappa}t}$
we get
\begin{align*}
\sum_i\mathcal I_i&\ll e^{ht}\mu_x^{\unst}(\mathsf G'(x))+e^{N\hat\kappa t}\Sob(\phi)e^{(h-\ref{a:th-smooth}+\ref{A:char-sob} {\hat\kappa}) t}\\
&=e^{(h-{\hat\kappa})t}+\Sob(\phi)e^{(h-\ref{a:th-smooth}+(N+\ref{A:char-sob}) {\hat\kappa}) t}.
\end{align*}
Therefore, we can choose ${\hat\kappa}$ so that the above upper bound yields 
\be\label{eq:ncc-easy-1}
\sum_i\mathcal I_i\ll \Sob(\phi)\Sob(\psi^{\unst})e^{(h-\ref{a:ncc-easy})t}
\ee
for some $\consta\label{a:ncc-easy}$ depending only on $\alpha$.

{\bf Contribution from points in $\mathsf B_r^{\unst}(x)\setminus{\mathsf H}_t^{\unst}(x)$.}
Let $\mathcal J$ denote the contribution  to $\ncc'(t,\psi^{\unst},\phi)$ coming from points
$y\in \mathsf B_r^{\unst}(x)\setminus{\mathsf H}_t^{\unst}(x)$. Then there is a unique 
$z_y\in \mathsf B_{r+r'}^{\unst}(x)\setminus{\mathsf H}_t^{\unst}(x,\bar K_\alpha)$ such that $a_{\mlsign t}z_y\in \mathsf B_{r'}^{\rm cs}(z)$. In consequence, by Theorem~\ref{thm:excep-traj}, we have
\[
\mathcal J\ll u(x)^{\ref{N:EMR}}u(z)^{\ref{N:EMR}}\|\phi\|_\infty\|\psi^{\unst}\|_\infty e^{(h-0.5)t}\ll u(x)^{\ref{N:EMR}}u(z)^{\ref{N:EMR}}\Sob(\phi)\Sob(\psi^{\unst})e^{(h-0.5)t}.
\]
Assuming $t\gg \max\{\log u(x),\log u(z)\}$, the above implies
\be\label{eq:ncc-emr}
\mathcal J\leq \Sob(\phi)\Sob(\psi^{\unst})e^{(h-0.6)t}.
\ee   

The proposition now follows from~\eqref{eq:ncc-4} in view of~\eqref{eq:ncc-easy-1}
and~\eqref{eq:ncc-emr}.
\end{proof}


%

\begin{proof}[Proof of Proposition~\ref{prop:ncc}]
Let $\varrho=e^{-\kappa t}$ and let $\epsilon=\varrho^N$, for two constants $\kappa, N>0$ which will be optimized later. 
Put $\phi=1_{\mathsf B^{\unst}_\varrho(z)}\phi^{\rm cs}$. Then
\be\label{eq:mu-phi}
\mu(\phi)=\varrho^h\mu_z^{\rm cs}(\phi^{\rm cs})
\ee

In view of Lemma~\ref{lem:Margulis-prop}, properties ($\mathcal S$-1), ($\mathcal S$-2), and ($\mathcal S$-2) hold with 
$\epsilon$ and $f=1_{\mathsf B_{\varrho-2\epsilon}^{\unst}(z)}$. Let $\phi_1^{\unst}=\varphi_{+,\epsilon}$ for these choices.
Put $\phi_1=\phi_1^{\unst}\phi^{\rm cs}$; there exists some $\consta\label{a:vro-ep}$ so that
\be\label{eq:vro-ep}
\mu(\phi_1)-\mu(\phi)\leq \epsilon^{\ref{a:vro-ep}}\mu_z^{\rm cs}(\phi^{\rm cs}).
\ee 

By Lemma~\ref{lem:ncc}, we have 
\begin{align}
\notag\ncc'(t,\psi^{\unst},\phi_1)&=e^{ht}\mu_{x}^{\unst}(\psi^{\unst})\mu(\phi_1)+O(\Sob(\psi^{\unst})\Sob(\phi_1)e^{(h-\ref{a:lem-nnc}) t})\\
\notag{}^{\text{\eqref{eq:vro-ep}}\leadsto}&=e^{ht}\mu_{x}^{\unst}(\psi^{\unst})\mu(\phi)+O(\epsilon^{\ref{a:vro-ep}}e^{ht}\mu_{x}^{\unst}(\psi^{\unst})+\Sob(\psi^{\unst})\Sob(\phi^{\rm cs})\epsilon^{-\star}e^{(h-\ref{a:lem-nnc}) t})\\
\label{eq:Ball-phi-1}{}^{\text{\eqref{eq:mu-phi}}\leadsto}&=e^{ht}\varrho^h\mu_{x}^{\unst}(\psi^{\unst})\mu_z^{\rm cs}(\phi^{\rm cs})+
O(\epsilon^{\ref{a:vro-ep}}e^{ht}\mu_{x}^{\unst}(\psi^{\unst})+\Sob(\psi^{\unst})\Sob(\phi^{\rm cs})\epsilon^{-\star}e^{(h-\ref{a:lem-nnc}) t}).
\end{align}

Let now $\phi_2^{\unst}=\varphi_{+,\epsilon}$ for $\epsilon$ and $f=1_{\mathsf B_{\varrho}^{\unst}(z)}$.
Put $\phi_2=\phi_2^{\unst}\phi^{\rm cs}$. 
Then similar to the above estimate, using Lemma~\ref{lem:ncc}, we get that  
\be\label{eq:Ball-phi-2}
\ncc'(t,\psi^{\unst},\phi_2)=e^{ht}\varrho^h\mu_{x}^{\unst}(\psi^{\unst})\mu_z^{\rm cs}(\phi^{\rm cs})+
O(\epsilon^{\ref{a:vro-ep}}e^{ht}\mu_{x}^{\unst}(\psi^{\unst})+\Sob(\psi^{\unst})\Sob(\phi^{\rm cs})\epsilon^{-\star}e^{(h-\ref{a:lem-nnc}) t}).
\ee 
Since $\phi_1\leq \phi\leq\phi_2$, we have  
\be\label{eq:ncc-phi-phii}
\ncc(t,\psi^{\unst},\phi_1)\leq \ncc'(t,\psi^{\unst},\phi)\leq \ncc(t,\psi^{\unst},\phi_2).
\ee
Moreover, using the definitions of $\ncc$ and $\ncc'$ we have 
\begin{align*}
\ncc'(t,\psi^{\unst},\phi)&=\sum\psi^{\unst}(y)\mu_{a_ty}^{\unst}(\phi)\\
&=\sum\psi^{\unst}(y)\phi^{\rm cs}(a_ty)\mu_z^{\unst}(B_\varrho^{\unst}(z))=\varrho^{h}\sum\psi^{\unst}(y)\phi^{\rm cs}(a_ty)\\
&=\varrho^{h}\ncc(t,\psi^{\unst},\phi^{\rm cs}).
\end{align*}
This and~\eqref{eq:ncc-phi-phii} imply that 
\[
\varrho^{-h}\ncc(t,\psi^{\unst},\phi_1)\leq\ncc(t,\psi^{\unst},\phi^{\rm cs})\leq\varrho^{-h}\ncc(t,\psi^{\unst},\phi_1).
\]
Hence, using~\eqref{eq:Ball-phi-1} and~\eqref{eq:Ball-phi-2}, we get that
\[
\ncc(t,\psi^{\unst},\phi^{\rm cs})=
e^{ht}\mu_{x}^{\unst}(\psi^{\unst})\mu_z^{\rm cs}(\phi^{\rm cs})+
O(\varrho^{-h}\epsilon^{\ref{a:vro-ep}}e^{ht}\mu_{x}^{\unst}(\psi^{\unst})+\Sob(\psi^{\unst})\Sob(\phi^{\rm cs})\epsilon^{-\star}e^{(h-\ref{a:lem-nnc}) t}).
\]
We choose $N$ large enough so that $\ref{a:vro-ep}N-h>\ref{a:vro-ep}N/2$ then choose 
$\kappa$ small enough so that $\epsilon^{-\star}e^{(h-\ref{a:lem-nnc}) t}=e^{(h-\ref{a:lem-nnc}/2)t}$.
The proof is complete.
\end{proof}

We end this section with the following corollary.

\begin{cor}\label{cor:nnc-notsmooth}
There exist $\consta\label{a:ncc-ns-1}$, $\consta\label{a:ncc-ns-2}$, 
and $\constA\label{A:ncc-ns}$ with the following property. 
Let $x,z\in\Amnfld$. Let $\psi^{\unst}\in C_c^\infty(\mathsf B_{0.1r(x)}^{\unst}(x))$
with $0\leq \psi^{\unst}\leq 1$ and let $\phi^{\rm cs}\in\mathcal S_{W^{\rm cs}(z)}(z,0.1r(z))$.
Then for all $\delta<r(z)/10L$ and all $t\geq \ref{N:ncc-smooth}\max\{\log u(x),\log u(z)\}$ we have 
\[
|\ncc\bigl(t,\psi^{\unst},\phi^{\rm cs}\bigr)-e^{ht}\mu_{x}^{\unst}(\psi^{\unst})\mu_z^{\rm cs}(\phi^{\rm cs})|\ll
\Sob(\psi^{\unst})\delta^{-\ref{A:ncc-ns}}e^{(h-\ref{a:ncc-ns-1}) t}+\delta^{\ref{a:ncc-ns-2}}\Sob(\psi^{\unst})e^{ht}
\]
where $h=\tfrac{1}{2}(\dim_{\bbr}\mathcal Q(\alpha)-2)$.

In particular, there exists $\consta\label{a:ncc}$, depending only on $\alpha$, so that the following holds. 
Assume further that $t\geq 2|\log r(z)|=2\ref{A:AGY-Mod}\log u(z)$, see~\eqref{eq:def-r(x)}, then 
\be\label{eq:ncc-ns-final}
|\ncc\bigl(t,\psi^{\unst},\phi^{\rm cs}\bigr)-e^{ht}\mu_{x}^{\unst}(\psi^{\unst})\mu_z^{\rm cs}(\phi^{\rm cs})|\ll\Sob(\psi^{\unst})e^{(h-\ref{a:ncc})t}.
\ee
\end{cor}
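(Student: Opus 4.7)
The plan is to sandwich $\phi^{\rm cs}$ between smooth approximations and apply Proposition~\ref{prop:ncc} to each, then control the resulting discrepancy using the $L^2$ control built into the definition of $\mathcal S_{W^{\rm cs}(z)}(z,r')$. Since $\phi^{\rm cs} \in \mathcal S_{W^{\rm cs}(z)}(z,r')$ and $\delta < r'$, there exist $\varphi_{\pm,\delta} \in C_c^\infty(E_{+,\delta})$ with $\varphi_{-,\delta} \leq \phi^{\rm cs} \leq \varphi_{+,\delta}$, $\Sob(\varphi_{\pm,\delta}) \leq \delta^{-L_0}$, and $\|\varphi_{+,\delta}-\varphi_{-,\delta}\|_2 \leq \delta\|\phi^{\rm cs}\|_2$. (If necessary, we shrink the working ball slightly so that $E_{+,\delta}$ remains inside a period box on which Proposition~\ref{prop:ncc} applies; this only affects the implicit constants.)

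First, since $\psi^{\unst} \ge 0$, the counting function $\ncc(t,\psi^{\unst},\cdot)$ is monotone in its second argument, so
\[
\ncc(t,\psi^{\unst},\varphi_{-,\delta}) \;\le\; \ncc(t,\psi^{\unst},\phi^{\rm cs}) \;\le\; \ncc(t,\psi^{\unst},\varphi_{+,\delta}).
\]
Applying Proposition~\ref{prop:ncc} to each of $\varphi_{\pm,\delta}$ and using $\Sob(\varphi_{\pm,\delta})\le \delta^{-L_0}$ gives
\[
\ncc(t,\psi^{\unst},\varphi_{\pm,\delta}) = e^{ht}\mu_x^{\unst}(\psi^{\unst})\mu_z^{\rm cs}(\varphi_{\pm,\delta}) + O\bigl(\Sob(\psi^{\unst})\delta^{-L_0}e^{(h-\ref{a:ncc-smooth})t}\bigr).
\]

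Next, I would bound the gap between $\mu_z^{\rm cs}(\varphi_{\pm,\delta})$ and $\mu_z^{\rm cs}(\phi^{\rm cs})$. Since $0 \le \varphi_{+,\delta}-\varphi_{-,\delta} \le 1$ and its support sits inside a fixed compact subset of $W^{\rm cs}(z)$ (say $\mu_z^{\rm cs}$-measure bounded by an absolute constant depending only on the genus), Cauchy--Schwarz together with ($\mathcal S$-3) yields
\[
0 \;\le\; \mu_z^{\rm cs}(\varphi_{+,\delta}) - \mu_z^{\rm cs}(\varphi_{-,\delta}) \;\le\; \|\varphi_{+,\delta}-\varphi_{-,\delta}\|_2 \,\mu_z^{\rm cs}(\supp)^{1/2} \;\ll\; \delta,
\]
and the same bound holds for $|\mu_z^{\rm cs}(\varphi_{\pm,\delta})-\mu_z^{\rm cs}(\phi^{\rm cs})|$. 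Hence the main term contributes $e^{ht}\mu_x^{\unst}(\psi^{\unst})\mu_z^{\rm cs}(\phi^{\rm cs}) + O(\delta\,e^{ht}\mu_x^{\unst}(\psi^{\unst}))$, and since $\mu_x^{\unst}(\psi^{\unst}) \ll \|\psi^{\unst}\|_\infty \ll \Sob(\psi^{\unst})$, combining the estimates proves the first displayed inequality with $\ref{a:ncc-ns-1}=\ref{a:ncc-smooth}$, $\ref{A:ncc-ns}=L_0$, and $\ref{a:ncc-ns-2}=1$.

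Finally, to deduce~\eqref{eq:ncc-ns-final} I would optimize the free parameter by setting $\delta = e^{-\eta t}$ with $\eta>0$ small enough that both $\delta^{-\ref{A:ncc-ns}}e^{-\ref{a:ncc-ns-1}t} = e^{(\eta\ref{A:ncc-ns}-\ref{a:ncc-ns-1})t}$ and $\delta^{\ref{a:ncc-ns-2}} = e^{-\eta\ref{a:ncc-ns-2}t}$ decay like $e^{-\ref{a:ncc}t}$; concretely, choosing $\eta = \ref{a:ncc-ns-1}/(2\ref{A:ncc-ns})$ and $\ref{a:ncc}=\min(\ref{a:ncc-ns-1}/2,\eta\ref{a:ncc-ns-2})$ works. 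The only mild subtlety is ensuring that the thickened support $E_{+,\delta}$ remains inside a period box admissible for Proposition~\ref{prop:ncc} (handled by an initial shrinking of $r'$), and that the implicit constant in the Cauchy--Schwarz step depends only on the genus, which follows from the uniform bound on $\mu_z^{\rm cs}$-volume of balls of radius $\le 0.01$ inside the compact set $K_\alpha$.
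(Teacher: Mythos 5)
Your proposal is correct and follows essentially the same route as the paper's own proof: sandwich $\phi^{\rm cs}$ between the smooth approximants $\varphi_{\pm,\delta}$ provided by the definition of $\mathcal S_{W^{\rm cs}(z)}(z,r')$, use $\psi^{\unst}\ge0$ to get monotonicity of $\ncc$, apply Proposition~\ref{prop:ncc} to the approximants, control the gap in the main term via ($\mathcal S$-3), and finally optimize $\delta=e^{-\star t}$. The only additions you make beyond the paper's terse argument are spelling out the Cauchy--Schwarz step behind ($\mathcal S$-3), the observation $\mu_x^{\unst}(\psi^{\unst})\ll\Sob(\psi^{\unst})$, and the footnoted care about the thickened support $E_{+,\delta}$ fitting inside an admissible period box; all of these are legitimate and consistent with the statement being proved.
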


\begin{proof}
The corollary follows from Proposition~\ref{prop:ncc} by approximating 
$\phi^{\rm cs}$ with smooth functions. Let $\delta<0.1r(z)/L$ and let $\phi^{\rm cs}_{\pm,\delta}$ be smooth functions 
satisfying ($\mathcal S$-1), ($\mathcal S$-2), and ($\mathcal S$-2) with 
$\delta$ and $\phi^{\rm cs}$. Hence, we have
\be\label{eq:S12-Sob-use}
\text{$\phi^{\rm cs}_{-,\delta}\leq \phi^{\rm cs}\leq \phi^{\rm cs}_{+,\delta}\quad$ and $\quad\Sob(\phi_{\pm,\delta})\ll\delta^{-\star}$;}
\ee
furthermore, property~($\mathcal S$-3) implies that
\be\label{eq:phi-pm-control}
|\mu_z^{\rm cs}(\phi_{+,\delta}^{\rm cs})-\mu_z^{\rm cs}(\phi_{-,\delta}^{\rm cs})|\ll \delta^\star.
\ee

With this notation and in  view of the first estimate in~\eqref{eq:S12-Sob-use}, we have 
\be\label{eq:phi-pm-again}
\ncc\bigl(t,\psi^{\unst},\phi^{\rm cs}_{-,\delta}\bigr)\leq \ncc\bigl(t,\psi^{\unst},\phi^{\rm cs}\bigr)\leq \ncc\bigl(t,\psi^{\unst},\phi^{\rm cs}_{+,\delta}\bigr).
\ee
In addition we may apply Proposition~\ref{prop:ncc} with $\psi^{\unst}$ and $\phi^+_{\pm\delta}$ and get that
\[
\ncc\bigl(t,\psi^{\unst},\phi^{\rm cs}_{\pm,\delta}\bigr)=e^{ht}\mu_{x}^{\unst}(\psi^{\unst})\mu_z^{\rm cs}(\phi^{\rm cs}_{\pm})+
O(\Sob(\psi^{\unst})\Sob(\phi_{\pm,\delta}^{\rm cs})e^{(h-\ref{a:ncc-smooth}) t}.
\]
This together with~\eqref{eq:phi-pm-again},~\eqref{eq:phi-pm-control}, and the second estimate in~\eqref{eq:S12-Sob-use} implies the first claim.

The second claim follows from the first claim by optimizing the choice $\delta=e^{-\star t}$.  
\end{proof}

\section{The space of measured laminations}\label{sec:tt}

In this section we recall some basic facts about the space of geodesic measured laminations and train track charts.  
The basic references for these results are \cite{Thurston:book:GTTM} and \cite{HP}. 

The space of geodesic measured laminations on $S$ is denoted by $\mathcal{ML}(S)$; 
it is a piecewise linear manifold homeomorphic to 
${\Bbb R}^{6g-6}$, but it does not have a natural differentiable structure \cite{Thurston:book:GTTM}. 
Train tracks were introduced by Thurston as a powerful technical device for understanding measured 
laminations. Roughly speaking train tracks are induced by squeezing almost parallel
strands of a very long simple closed geodesic to simple arcs on a hyperbolic surface.
A train track $\tau$\index{t@$\tau$ a train track} on a surface $S$ is a finite closed 1 complex $\tau \subset S$ with 
vertices (switches) which is 
\begin{itemize}
\item[-] embedded on $S$, 
\item[-] away from its switches, it is $C^1$, 
\item[-] it has tangent vectors at every point, and 
\item[-] for each component $R$ of $S-\tau$ , the double of $R$ along the interiors of the edges of $\partial(R)$ has negative Euler characteristic.
\end{itemize}
The vertices (or switches), $V$, of a train track are the points where $3$ 
or more smooth arcs come together. Each edge of $\tau$ is a smooth path with a well defined tangent vector. 
That is: all edges at a given vertex are tangent. 
The inward pointing tangent of an edge divides the branches that are incident 
to a vertex into incoming and outgoing branches. 

A train track $\tau$ is called maximal (or generic) if at each vertex there are two incoming edges and one outgoing 
edge.

 \subsection{Train track charts}\label{sec:ttc}
 A lamination $\lambda$\index{l@$\lambda$ a lamination} on $S$ is carried by a train track $\tau$ if there is a differentiable map 
 $f : S\rightarrow S$ so that
 \begin{itemize}
 \item[-] $f$ is homotopic to the identity,  
 \item[-] the restriction of $\dif\!f$ to a tangent line of $\lambda$ is nonsingular, and 
 \item[-] $f$ maps $\lambda$ onto $\tau$.  
 \end{itemize}
 Every geodesic lamination is carried by some train track.
Let $\lambda$ be a measured lamination with invariant measure $\mu$. 
If $\lambda$ is carried by the train track $\tau$, 
then the carrying map defines a counting measure $\mu(b)$ to each branch line $b$: 
$\mu(b)$ is just  the transverse measure of the leaves of $\lambda$ collapsed to a point on $b$. 
 At a switch, the sum of the entering numbers equals the sum of the exiting numbers. 
 
The piecewise linear integral structure on $\ML$ is induced by train tracks as follows.
Let $\cV(\tau)$ be the set of measures on a train track $\tau$; more precisely, 
$u \in \cV(\tau)$ is an assignment of positive real numbers to the edges 
of the train track satisfying the switch condition:
\[
\sum\limits_{\mbox{incoming}\; e_{i}}  u(e_{i})=\sum\limits_{\mbox{ outgoing}\;e_{j}} u(e_{j}).
\] 
Also, let $\cW(\tau)\index{w@ $\cW(\tau)$}$ be the vector space of all real weight systems on edges of $\tau$ satisfying the 
switch condition, i.e., $u(e_{i})$ need not be positive for $u\in \cW(\tau)$.
Then $\cV(\tau)\index{v@ $\cV(\tau)$}$ is a cone on a finite-sided polyhedron where the faces are of the form $\cV(\sigma) \subset \cV(\tau)$ 
where $\sigma$ is a sub train track of $\tau$.

If $\tau$ is {\em bi-recurrent}, then the natural map $\iota_{\tau}:\mathcal V(\tau) \rightarrow \ML$\index{i@$\iota_\tau$ natural embedding of $\mathcal V(\tau)$ in into $\ML$} is continuous and injective, see \cite[\S1.7]{HP}. Let 
\be\label{eq:def-ttc}
\ttc\index{u@$\ttc$ a train track chart in $\ML$}= \iota_{\tau}(\cV(\tau)) \subset \ML.
\ee
Moreover, we have the following.
\begin{lemma}\label{lem:p-linear}
Let $\mathcal U_{1} \subset \cV(\tau_1)$ and $\mathcal U_2 \subset \cV(\tau_2)$ be such that $\iota_{\tau_1}(\cU_1)= \iota_{\tau_2}(\cU_2)$. Then the map $\iota_{\tau_{2}}^{-1} \circ \iota_{\tau_1}: \cU_{1} \rightarrow \cU_2$ 
is a piecewise linear map and hence it is bilipschitz.
\end{lemma}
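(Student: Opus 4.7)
The plan is to show that the transition map is locally linear on a finite polyhedral decomposition of $\cU_1$, from which the bilipschitz property is immediate.

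First I would recall the following basic fact: if a train track $\sigma$ is carried by a train track $\tau$, then the combinatorics of the carrying map determine a non-negative integer matrix $A_{\sigma,\tau}$ whose $(e',e)$-entry records the number of times the $\sigma$-branch $e$ is mapped across the $\tau$-branch $e'$. This matrix defines a linear map $\cW(\sigma)\to\cW(\tau)$ whose restriction to $\cV(\sigma)$ lands in $\cV(\tau)$, and which intertwines the two charts in the sense that $\iota_\tau\circ A_{\sigma,\tau}=\iota_\sigma$; this is standard, see~\cite{HP}.

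Next, for any $u\in\cU_1$ set $\lambda=\iota_{\tau_1}(u)$ and $u'=\iota_{\tau_2}^{-1}(\lambda)\in\cU_2$. Let $\sigma_i\subset\tau_i$ be the sub-train-track obtained by deleting the branches on which $u$ (resp.\ $u'$) vanishes; then $\lambda$ lies in the interior of $\iota_{\sigma_i}(\cV(\sigma_i))$ for $i=1,2$. Applying Thurston's splitting calculus for train tracks (\cite{HP}), after finitely many elementary splittings of $\sigma_1$ one produces a train track $\sigma$ that is carried by both $\sigma_1$ and $\sigma_2$ and that carries $\lambda$ with strictly positive weights. The first step then supplies a relatively open neighborhood of $u$ in $\cU_1$ throughout which the transition $\iota_{\tau_2}^{-1}\circ\iota_{\tau_1}$ agrees with the restriction of the linear map $A_{\sigma,\tau_2}\circ A_{\sigma,\tau_1}^{-1}$. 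Since $\tau_1$ and $\tau_2$ each admit only finitely many sub-train-tracks, only finitely many combinatorial types of common refinement $\sigma$ can occur, and hence $\cU_1$ decomposes into finitely many relatively open polyhedral pieces on each of which the transition is linear.

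Finally, on each piece the transition and its inverse have Lipschitz constants bounded by the operator norms of the finitely many matrices involved; since these linear pieces are continuous across common boundary faces, one obtains a uniform bilipschitz bound on any compact subset of $\cU_1$.

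The main obstacle will be the middle step, namely producing a common refinement $\sigma$ carried by both $\tau_i$ so that a single linear relation governs an entire open neighborhood rather than merely the single lamination $\lambda$. This rests squarely on Thurston's splitting theory for train tracks developed in~\cite{HP}; once such a refinement is available the piecewise-linearity and bilipschitz conclusions reduce to elementary linear algebra and a finiteness argument over sub-train-tracks.
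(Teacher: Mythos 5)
The paper gives no proof of this lemma; it simply cites \cite[\S2 and Thm.~3.1.4]{HP}, so there is no argument in the paper to compare yours against at the level of detail you have written. Your sketch is a reasonable unpacking of the Penner--Harer machinery (carrying matrices, elementary splittings, sub-train-tracks), and the overall strategy --- find a common refinement carried by both $\tau_1$ and $\tau_2$, use the carrying matrices to express the transition as a linear map near each point, then invoke a finiteness argument --- is indeed the standard route to piecewise integral linearity of train track transition maps.

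Two points in your outline deserve flagging, though. First, the existence of a common refinement $\sigma$ of $\sigma_1$ and $\sigma_2$ that carries $\lambda$ with full positive weights, obtained by finitely many splittings, is itself a substantial theorem in the Penner--Harer theory (it is essentially what Thm.~3.1.4 there delivers), not a routine consequence of the definitions; as written, your middle step silently imports the hardest part of the cited reference. Second, your finiteness argument --- ``only finitely many combinatorial types of common refinement $\sigma$ can occur because $\tau_1,\tau_2$ have finitely many sub-train-tracks'' --- does not follow as stated: for a \emph{fixed} pair $(\sigma_1,\sigma_2)$ of sub-train-tracks, arbitrarily long splitting sequences can produce infinitely many pairwise non-isotopic $\sigma$'s. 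The finiteness of the polyhedral decomposition in \cite{HP} rests on a more careful analysis (the regions are cut out by switch and positivity inequalities pulled back through the carrying matrices), and merely counting sub-train-tracks is not enough. These gaps do not signal a wrong approach --- you are following the argument the paper implicitly invokes --- but they should be acknowledged as resting on the cited theorem rather than being derived from first principles.
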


For the proof see \cite[\S2 and Thm.~3.1.4]{HP}.

\subsection{Thurston symplectic form on $\ML$}\label{sec:thu-symp}

We can identify $\cW(\tau)$ with the tangent space of $\ML$ at a
point $u \in \cV(\tau)$, see \cite{HP}.

For any train track $\tau$, the integral points in $\cV(\tau)$ are in one to one correspondence 
with the set of integral multicurves in $\ttc \subset \ML$.
The natural volume form on $\cV(\tau)$ defines a mapping class group invariant volume form
$\muth$ in the Lebesgue measure class on $\ML$.

In fact, the volume form on $\ML$ is induced by a mapping class group invariant $2$-form 
$\omega$ as follows. 
Suppose $\tau$ is maximal, for $u_{1}, u_{2} \in \cW(\tau)$ the symplectic pairing is defined  as follows.  
\begin{equation}\label{defs}
\omega(u_{1},u_{2})=\frac{1}{2}\Bigl(\sum u_{1}(e_{1}) \;u_{2}(e_{2})-\;u_{1}(e_{2})\; u_{2}(e_{1})\Bigr),
\end{equation}
the sum is over all vertices $v$ of the train track where $e_{1}$ and
$e_{2}$ are the two incoming branches at $v$ such that $e_{1}$ is on the right side of 
the common tangent vector.

This form defines an antisymmetric bilinear form on $\cW(\tau)$.

\begin{lemma}
Let $\tau$ be maximal. The Thurston form $\omega$, defined in~\eqref{defs}, is 
non-degenerate. Therefore it gives rise to a symplectic form on the piecewise linear
manifold $\ML$.
\end{lemma}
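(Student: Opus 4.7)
The strategy is to prove that the top exterior power $\omega^{3g-3}$ is nowhere zero on $\cW(\tau)$; since $\omega$ is antisymmetric and bilinear, this is equivalent to non-degeneracy. First I would carry out the dimension count $\dim\cW(\tau)=6g-6$. Because $\tau$ is maximal, every switch is trivalent and every complementary region is a triangular disk with three cusps on its boundary. Writing $V$, $E$, $T$ for the numbers of switches, branches, and complementary triangles, trivalence gives $3V=2E$; the observation that each switch contributes exactly one cusp to exactly one triangle (the cusp sector at the switch), combined with the fact that each triangle has three cusps, gives $V=3T$; and the Euler characteristic identity $V-E+T=2-2g$ then forces $E=18g-18$, $V=12g-12$, $T=4g-4$. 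Since the switch conditions at distinct switches are linearly independent, $\dim\cW(\tau)=E-V=6g-6=2(3g-3)$.

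Next I would show that $\omega^{3g-3}$ is a nonzero multiple of the Lebesgue volume form on $\cW(\tau)$. Recall that, by the very construction of the piecewise linear integral structure, $\muth$ pulls back under $\iota_\tau$ to a positive constant multiple of the standard Lebesgue measure on $\cW(\tau)$; consequently it suffices to verify that $\tfrac{1}{(3g-3)!}\omega^{3g-3}$ equals, up to a universal constant, this Lebesgue form. Once this is done, non-degeneracy is immediate, because an antisymmetric bilinear form whose top exterior power is nowhere zero has trivial radical. The identification $\omega^{3g-3}=c\,du$ can be established by either of two standard routes: the combinatorial route, as in Penner--Harer~\cite{HP}, proceeds by induction over elementary train-track moves (splittings and shifts) and verifies that both sides transform compatibly under the piecewise linear transition maps of Lemma~\ref{lem:p-linear}; the topological route realizes $(\cW(\tau),\omega)$ as a symplectic subspace of $H_1$ of the oriented surface obtained by doubling the tie neighborhood of $\tau$ along its non-cuspidal boundary, the switch-condition subspace corresponding to the $(-1)$-eigenspace of the deck involution, with~\eqref{defs} then becoming the restriction of the algebraic intersection pairing whose non-degeneracy on that eigenspace is classical.

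The main obstacle is precisely this identification, which must be executed without circular appeal to the non-degeneracy being proved. The combinatorial approach requires tracking the Pfaffian of the antisymmetric matrix of $\omega$ through each splitting move; the topological approach requires matching the local switch-condition data with the involution-equivariant homology of the double cover and checking that the factor $\tfrac{1}{2}$ in~\eqref{defs} is precisely the one accounting for the doubling. In the context of the present paper, the most economical route is to invoke the explicit computation in~\cite{HP} directly: the dimension count and the antisymmetric bilinear structure are recorded here for the reader, while non-degeneracy rests on that external input.
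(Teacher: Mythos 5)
Your plan ultimately does what the paper does: it records the bilinear structure and the dimension count and defers the actual non-degeneracy to Harer--Penner, which is exactly how the paper handles this lemma (the ``proof'' in the text is the citation to \cite[\S 3]{HP}). So the approaches coincide; your two sketched routes (Pfaffian through splitting moves, or the odd homology of the oriented double of a tie neighborhood) are in fact the two arguments one finds in the literature, and your remark that the $(-1)$-eigenspace of a symplectic involution is a symplectic subspace is the correct way to close the topological route.

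One thing to tighten: in the dimension count you assert that ``the switch conditions at distinct switches are linearly independent,'' but this is \emph{not} automatic. It is equivalent to the train track being non-orientable; for an orientable train track the $V$ switch conditions have a one-dimensional relation (a coherent orientation of the branches exhibits the sum of all switch conditions as zero), which is precisely why the paper's preceding lemma records $\dim\cV(\tau)=2g+k-1$ in the orientable case versus $2g+k-2$ in the non-orientable one. For your count to give $6g-6$ you therefore need to observe that a maximal filling train track on a closed genus $g\ge2$ surface is necessarily non-orientable (equivalently, that if it were orientable, $\dim\cV(\tau)=6g-5$ would exceed $\dim\ML$, contradicting injectivity of $\iota_\tau$; alternatively, an orientable $\tau$ can only carry foliations arising from abelian differentials, while complementary triangles force the principal quadratic stratum). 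That one-sentence justification would make the dimension count airtight; the rest of the proposal is sound.
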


See \cite[\S 3]{HP} for a proof and also the relationship between the intersection pairing 
of $H^{1}(S, {\Bbb R})$ and Thurston intersection pairing.

\subsection{Combinatorial type of measured laminations and train tracks}\label{sec:comb-tt}
Each component of $S- \lambda$ is a region bounded by closed geodesics and infinite geodesics; further, any such region
can be doubled along its boundary to give a complete hyperbolic surface which has finite area.

We say a filling measured lamination $\lambda$ is of type ${\bf a}=(a_{1},....a_{\ipt})$ if and only if $S-\lambda$
 consists of ideal polygons with $a_{1},\ldots,a_{\ipt}$ sides. 
 By extending the measured lamination $\lambda$ to a foliation with isolated singularities on the complement, we see that
 $ \sum_{i=1}^{\ipt} a_{i}=4g-4+2\ipt$, see~\cite{Thurston:book:GTTM} and~\cite{Lev}.
 
Similarly, each component of the complement of a filling train track $\tau$  is a non-punctured or once-punctured cusped polygon of negative Euler index.
We say a train track $\tau$ is of type ${\bf a}=(a_{1},\ldots,a_{\ipt})$, if and only if 
$S-\tau$ consists of $k$ polygons with $a_{1},\ldots, a_{k}$ sides.
Every measured lamination of type ${\bf a}=(a_{1},\ldots,a_{\ipt})$ can be 
carried by a train track of type ${\bf a}$. 
 
  \begin{lemma}
 For any filling train track $\tau$ of type ${\bf a}=(a_1,\ldots, a_{\ipt})$ we have
 \[
 \operatorname{dim}(V(\tau))=2g+\ipt-1\quad  \text{ if $\tau$ is orientable;}
 \]
 \[
 \operatorname{dim}(V(\tau))=2g+\ipt-2\quad \text{ if $\tau$  is not orientable.}
 \]
 \end{lemma}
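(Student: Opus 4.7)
The plan is to compute $\dim V(\tau) = \dim \cW(\tau)$ (which are equal since $\tau$ is bi-recurrent, so $V(\tau)$ has nonempty interior in $\cW(\tau)$) by realizing $\cW(\tau)$ as the kernel of the switch-condition linear map $D : \mathbb{R}^{E} \to \mathbb{R}^{N}$, where $E$ denotes the number of edges and $N$ the number of switches of $\tau$. By rank-nullity,
\[
\dim \cW(\tau) = (E - N) + \dim \ker D^T,
\]
so it suffices to compute each summand.

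For $E - N$, I would use Euler characteristic. At each switch $v$ of valence $d_v$ with $i_v$ incoming and $o_v = d_v - i_v$ outgoing branches, the number of cusps of $S \setminus \tau$ at $v$ equals $(i_v - 1) + (o_v - 1) = d_v - 2$. Summing over switches and using $\sum_v d_v = 2E$ shows that the total number of cusps of the complementary polygons is $2E - 2N$. On the other hand, this total equals $\sum_i a_i = 4g - 4 + 2\ipt$ by the identity recorded in \S\ref{sec:comb-tt}. Hence $E - N = 2g + \ipt - 2$, which is already the formula asserted in the non-orientable case.

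For $\dim \ker D^T$, an element $c = (c_v) \in \ker D^T$ is an assignment of reals to switches such that for every edge $e$ with endpoints $v_1, v_2$ one has $\epsilon_1 c_{v_1} + \epsilon_2 c_{v_2} = 0$, where $\epsilon_i \in \{\pm 1\}$ records which side of the tangent line at $v_i$ the edge $e$ lies on. Since $\tau$ is filling and hence connected, such a $c$ is determined up to a global scalar by its value at any one switch, and a globally consistent nonzero choice exists iff the monodromy of the $\mathbb{Z}/2$ local system on $\tau$ defined by the $\epsilon_i$ is trivial along every closed loop. That monodromy condition is precisely the train-track orientability of $\tau$: a nonzero $c$ lets one coherently orient the edges of $\tau$ so that each switch condition becomes the ordinary cycle condition on the resulting oriented graph. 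Therefore $\dim \ker D^T$ equals $1$ if $\tau$ is orientable and $0$ otherwise, yielding the two formulas in the lemma.

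The main obstacle is making the equivalence in the previous paragraph precise: one must fix an auxiliary orientation on each edge, tabulate the signs $\epsilon_i$ entering each switch equation, and verify that triviality of the associated $\mathbb{Z}/2$ local system on $\tau$ is equivalent to the existence of a globally consistent train-track orientation in the sense of \S\ref{sec:tt}. The Euler-characteristic step is, by contrast, essentially bookkeeping.
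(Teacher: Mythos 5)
Your proposal is correct and follows what is essentially the only route for this dimension count: rank--nullity applied to the switch-condition map $D$, an Euler characteristic (cusp-counting) argument for $E-N$, and a $\mathbb{Z}/2$ local-system argument identifying $\dim\ker D^{T}$ with a $0$-or-$1$ orientability obstruction. The paper states this lemma without proof (it is a standard fact, implicitly from~\cite{HP}), so there is no paper argument to compare against; your sketch is the standard one and all the claimed steps are sound, including the Euler-characteristic identity $2E-2N=\sum a_i=4g-4+2k$ and the equivalence between triviality of the $\pm1$ monodromy and orientability via $\sigma_{v_1}/\sigma_{v_2}=-\epsilon_{v_1,e}\epsilon_{v_2,e}$ along each edge.

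Two small points worth making explicit when you write it up. First, the step $\dim V(\tau)=\dim\cW(\tau)$ genuinely uses recurrence of $\tau$ (filling alone does not give a positive weight system), so you should either state that hypothesis or note that it is implicit in the paper's context; connectedness of $\tau$, which you invoke, does follow from filling on a connected closed surface. Second, for an edge with both ends at the same switch $v$, the defining relation for $c\in\ker D^{T}$ reads $(\epsilon_1+\epsilon_2)c_v=0$; if the two ends lie on the same side of the tangent line this forces $c_v=0$ and, by connectedness, $c\equiv0$ — and such an edge simultaneously obstructs a coherent orientation — so the claimed equivalence survives this case, but it should be checked rather than left implicit.
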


 More generally, a measured lamination $\lambda$ is said to be of type ${\bf a}$ 
 if there exists a quadratic differential $q\in {\mathcal{Q}(a_1-2,\ldots,a_\ipt-2)}$ such that 
 $\lambda= \Rfrak(q)$. It is easy to check that if $\lambda$ is filling, the above can happen only if $S-\lambda$
 consists of ideal polygons with $a_{1},\ldots,a_{\ipt}$ sides.

In general, see~\cite[\S 3]{HP}, we have: 

\begin{prop}\label{prop:qd-tt}
Given a measured lamination $\lambda$ of type ${\bf a}$, there exists a birecurrent train track of type ${\bf a}$ such that $\lambda$ is an interior point of $\ttc$.
\end{prop}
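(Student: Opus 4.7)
The plan is to invoke the classical \emph{collapsing-a-foliated-neighborhood} construction: build $\tau$ directly from $\lambda$, then verify in turn (i) $\tau$ has type $\mathbf{a}$, (ii) $\lambda$ lies in the interior of $V(\tau)$, and (iii) $\tau$ is birecurrent.

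First I would fix an auxiliary complete hyperbolic metric on $S$ in which $\lambda$ is realized as a geodesic lamination, and choose $\epsilon > 0$ smaller than the injectivity radius along the support of $\lambda$. The $\epsilon$-tubular neighborhood $N_\epsilon(\lambda)$ admits a canonical foliation by short geodesic arcs orthogonal to the leaves of $\lambda$. Let $\tau \subset S$ be the graph obtained by collapsing each orthogonal arc to a single point. Equipping $\tau$ with the tangent line inherited from $\lambda$ at every point yields a $C^1$-embedded train track; by construction $\lambda$ is carried by $\tau$ via the collapsing map.

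For (i), the inclusion $S \setminus N_\epsilon(\lambda) \hookrightarrow S \setminus \tau$ is a deformation retract, so $S \setminus \tau$ has the same combinatorial structure as $S \setminus \lambda$, with each ideal vertex of a complementary ideal polygon becoming a cusp of the corresponding complementary region of $\tau$. The filling hypothesis rules out spurious disk or annular components, so the complementary regions of $\tau$ are cusped polygons with exactly $a_1, \ldots, a_k$ sides; thus $\tau$ is of type $\mathbf{a}$. For (ii), the transverse measure on $\lambda$ assigns to each branch $e$ of $\tau$ the strictly positive weight $w(e) := \mu(\alpha_e)$ where $\alpha_e$ is any orthogonal arc collapsing onto $e$; invariance of the transverse measure forces the switch condition at every vertex. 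Hence $\iota_\tau^{-1}(\lambda) = (w(e))_e$ lies in the interior of the positive cone $\cV(\tau)$, which by definition makes $\lambda$ an interior point of $U(\tau)$.

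For (iii), recurrence is automatic, since $\lambda$ itself realizes a positive weight system on every branch. Transverse recurrence is the more delicate half and is the main obstacle: one must produce, for each branch $e$, a simple closed curve on $S$ meeting $\tau$ transversely and crossing $e$. Here I would invoke the filling property of $\lambda$ — which implies that every complementary region of $\tau$ is a cusped polygon of negative Euler characteristic, with no annular components — together with the standard dual-arc construction in~\cite[\S1.7 and \S3]{HP}, which produces a dual system of simple closed curves hitting $\tau$ efficiently and covering every branch. The existence of such a dual family is precisely transverse recurrence, completing the proof. The whole argument is classical; the only point requiring care is matching the combinatorial type of $\tau$ to that of $\lambda$, which forces the specific choice of collapsing the orthogonal foliation of a tubular neighborhood rather than any generic carrying train track.
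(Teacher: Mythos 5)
The paper does not actually supply a proof of this proposition; it is stated as a consequence of \cite[\S3]{HP}. Your sketch reconstructs the standard Thurston/Penner--Harer tie-neighborhood collapse, which is precisely the content of that reference, so the approach is the one the paper implicitly endorses. The outline is correct: every branch of the collapsed track $\tau$ lies under a tie that crosses the support of $\lambda$, so the induced weights are all strictly positive and $\lambda$ sits in the interior of $\cV(\tau)$; and $S\setminus\tau$ deformation retracts onto $S\setminus N_\epsilon(\lambda)$, matching the complementary combinatorics and hence the type $\mathbf{a}$.

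The one place you are genuinely deferring work --- transverse recurrence --- is exactly the part the paper delegates to \cite[\S1.7, \S3]{HP}. A fully written-out proof would have to verify that the simple closed curves obtained by threading ties through complementary polygons meet $\tau$ with no bigons; \cite{HP} does this via the small-angle hyperbolic (geodesic) realization of $\tau$, which your tie-neighborhood construction produces automatically, so citing it there is appropriate and not a gap. One small remark on your appeal to the ``filling hypothesis'': the paper's general definition of type $\mathbf{a}$ (via $\lambda = \mathfrak{R}(q)$) does not literally require $\lambda$ to fill, but the conclusion that $\lambda$ is an \emph{interior} point of a train-track chart already forces the complement of $\lambda$ to be polygonal with no annular pieces, so the filling case is the only one the conclusion can apply to and your tacit assumption is consistent with how the proposition is used.
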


For every ${\bf a}=(a_1,\ldots,a_{\ipt})$ so that  
 $ \sum_{i=1}^{\ipt} a_{i}=4g-4+2\ipt$,
we can fix a collection $\tau_{{\bf a},1},\ldots,\tau_{{\bf a},\ttcn}$ of train tracks with the following property.  
Every $\lambda$ which can be carried by a train track of type ${\bf a}$ can be carried 
by at least one $\tau_{{\bf a},i}$ for some $i$.

\subsection{The Hubbard-Masur map}\label{sec:HubMas} 
Let $\mathcal{MF}(S)$ denote the space of measured foliations on $S$. Define
\[
\tilde{\mathcal P}\index{p@$\tilde{\mathcal P}$ the Hubbard-Masur map}:\mathcal Q\Teich(S)\to\mathcal {MF}(S)\times\mathcal {MF}(S)\setminus\Delta
\]
by $\tilde{\mathcal P}(q)=(\Rfrak(q^{1/2}),\Ifrak(q^{1/2}))$ where 
\[
\Delta=\{(\eta,\lambda):\text{ there exists $\sigma$ so that }\intsc(\sigma,\lambda)+\intsc(\sigma,\eta)=0\}.
\]


\begin{theorem}[Hubbard-Masur, Gardiner]\label{thm:HubMas}
The map $\tilde{\mathcal P}$ is a $\MC$ equivariant homeomorphism. 
\end{theorem}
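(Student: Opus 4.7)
The plan is to prove Theorem~\ref{thm:HubMas} in four steps: equivariance, continuity, injectivity, and surjectivity. The first two are routine. Equivariance is immediate from naturality, since the $\MC$-action on $\mathcal Q\Teich(S)$ is by pullback of the complex structure and of the quadratic differential, and this commutes with the formation of $\Rfrak(q^{1/2})$ and $\Ifrak(q^{1/2})$ as measured foliations. For continuity of $\tilde{\mathcal P}$, I would work in period coordinates on each stratum $\mathcal Q(\alpha)$: the intersection numbers of $\Rfrak(q^{1/2})$ and $\Ifrak(q^{1/2})$ with a simple closed curve $\gamma$ can be read off from period integrals of $q^{1/2}$ (or, when $\varsigma=-1$, from its orienting double cover), and together with the train-track coordinates recalled in \S\ref{sec:tt} these suffice to give continuity into $\MF(S)\times\MF(S)$.

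For injectivity, suppose $\tilde{\mathcal P}(M_1,q_1)=\tilde{\mathcal P}(M_2,q_2)=(\eta,\lambda)$. The condition $(\eta,\lambda)\notin\Delta$ guarantees that $\eta$ and $\lambda$ fill $S$; in particular, off a finite common set of singularities they define transverse flat charts $z=x+\mathbf{i} y$ in which $dx$ and $dy$ are the transverse measures of $\eta$ and $\lambda$, respectively. On these charts each $q_i$ must equal $dz^2$, so the identity of $S$ lifts to a biholomorphism $M_1\to M_2$ carrying $q_1$ to $q_2$. Hence $\tilde{\mathcal P}$ is injective.

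Surjectivity onto $\MF(S)\times\MF(S)\setminus\Delta$ is the main obstacle; this is the classical Hubbard--Masur theorem together with Gardiner's extension. Given $(\eta,\lambda)\notin\Delta$, I would fix $\eta$ and consider the map $\mathcal H_\eta:\Teich(S)\to\MF(S)$ sending $X$ to the vertical foliation of the unique quadratic differential on $X$ whose horizontal foliation is $\eta$. Well-definedness of $\mathcal H_\eta$ is the original Hubbard--Masur theorem on a single Riemann surface, proved variationally by minimizing a Dirichlet-type functional over quadratic differentials realizing the prescribed horizontal measured foliation; injectivity of $\mathcal H_\eta$ follows from the flat-chart argument of the previous paragraph. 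The nontrivial remaining claim is that $\mathcal H_\eta$ is proper onto $\{\lambda:(\eta,\lambda)\notin\Delta\}$: if $X_n\to\infty$ in $\Teich(S)$, then some simple closed curve $\sigma_n$ has extremal length tending to $0$ on $X_n$, and a Cauchy--Schwarz estimate comparing the hyperbolic/flat lengths of $\eta$ and $\mathcal H_\eta(X_n)$ against $\sigma_n$ forces $\intsc(\sigma_n,\eta)+\intsc(\sigma_n,\mathcal H_\eta(X_n))\to 0$, driving the pair into $\Delta$. Invariance of domain between manifolds of the same real dimension $12g-12$ then upgrades the continuous proper bijection $\tilde{\mathcal P}$ to a homeomorphism, which in particular supplies continuity of $\tilde{\mathcal P}^{-1}$.
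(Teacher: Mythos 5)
The paper states Theorem~\ref{thm:HubMas} without proof, citing Hubbard--Masur and Gardiner, so there is no in-paper argument to compare against; I will assess your sketch on its own terms. The reductions to equivariance, continuity stratum by stratum, injectivity via flat charts, and the Hubbard--Masur fibration $\mathcal H_\eta$ are all standard and in order.

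The genuine gap is in your properness step. You assert that $X_n\to\infty$ in $\Teich(S)$ forces some simple closed curve $\sigma_n$ to satisfy $\Ext_{X_n}(\sigma_n)\to 0$. That is false: take $X_n=\phi^n X_0$ for a pseudo-Anosov mapping class $\phi$. Each $X_n$ is isometric to $X_0$ as a hyperbolic surface, so $\inf_{\sigma}\Ext_{X_n}(\sigma)=\inf_{\sigma}\Ext_{X_0}(\sigma)$ is bounded below by a fixed positive constant, while $d_{\Teich}(X_0,X_n)\to\infty$. You are conflating divergence in Teichm\"uller space (what properness of $\mathcal H_\eta$ requires) with divergence in moduli space (where Mumford's criterion does produce a short curve). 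With no shrinking curve available, your Cauchy--Schwarz step has nothing to bite on, and the conclusion that $\mathcal H_\eta(X_n)$ is driven into $\Delta$ does not follow. Establishing properness of $\mathcal H_\eta$ --- equivalently continuity of $\tilde{\mathcal P}^{-1}$ --- is precisely the hard content here and needs a different mechanism, for instance a compactness argument for the singular flat metrics $|q_{X_n}|$ combining the area identity $\|q_{X_n}\|=\intsc(\eta,\lambda_n)$ with the lower bound on flat lengths $\ell_{q_{X_n}}(\sigma)\gg\intsc(\sigma,\eta)+\intsc(\sigma,\lambda_n)$, uniform over projective classes of $\sigma$, which prevents degeneration precisely when $(\eta,\lambda_n)$ stays away from $\Delta$. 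A secondary point: even granting properness, your ``image is open, closed, and nonempty'' route to surjectivity requires knowing that $\{\lambda:(\eta,\lambda)\notin\Delta\}$ is connected, which you did not address; alternatively surjectivity can be obtained directly by assembling the singular flat structure determined by a transverse filling pair $(\eta,\lambda)$, sidestepping the connectedness question altogether.
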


This gives rise to an equivariant homeomorphism 
from $\mathcal Q\Teich(S)$ onto $\ML\times\ML\setminus\Delta$ which we continue to denote by $\tilde{\mathcal P}$, see~\cite{Thurston:book:GTTM} and~\cite{Lev}.

Recall that $\PML$ denotes the space of {\em projective} measured lamination. The map $\tilde{\mathcal P}$ 
also gives rise to an equivariant homeomorphism 
\[
\hubmas\index{p@$\hubmas$ the normalized Hubbard-Masur map}:\mathcal Q_1\Teich(S)\to\PML\times\ML\setminus\Delta
\] 
where $\hubmas(q)=([\Rfrak(q^{1/2})],\Ifrak(q^{1/2}))$
and $\Delta=\{([\eta],\lambda):\exists\;\sigma$ so that $\intsc(\sigma,\eta)+\intsc(\sigma,\lambda)=0\}$.

Recall that $\pi$ is the natural projection from $\mathcal Q_1\Teich(S)$ to $\mathcal Q_1(S)$, then we have the map
\be\label{eq:def-bhubmas}
\bhubmas:\PML\times\ML\setminus\Delta\to\mathcal Q_1(S).
\ee



\subsection{Convexity of the hyperbolic length function}\label{sec:add-ttc}
Let $\lambda_1,\lambda_2 \in \ttc=\iota_{\tau}(\cV(\tau))$, 
see \S\ref{sec:ttc} for the definition of $\iota_\tau$. 
The sum 
\[
\lambda_1 \oplus_{\tau} \lambda_2=\iota_{\tau} (\iota_{\tau}^{-1}(\lambda_1) {\bf +} \iota_{\tau}^{-1}(\lambda_2))
\]
could depend on $\tau$. 
However, it is proved in~\cite[App.~A]{M-Thesis} that given a closed curve $\gamma$, 
$i(\gamma,.): \ttc \rightarrow {\Bbb R}_{+}$ defines a convex function from which convexity of the hyperbolic length function is drawn in \cite[Thm.~A.1]{M-Thesis}. The following is an extension of \cite[Thm.~A.1]{M-Thesis} 
to the case of variable negative curvature. We are grateful to K.~Rafi for providing the proof of this theorem. 

\begin{theo}\label{thm:int-convex}
Let $\vM$ be a compact surface equipped with a Riemannian metric of negative curvature, and   
let $\tau$ be a train track. Let $\ell_\vM: \ttc \rightarrow {\Bbb R}^{+}$ denote the length function. 
For every pair of measured laminations 
$\lambda_1, \lambda_2 \in \ML$ carried by $\tau$ if $\mu = \lambda_1 \oplus_\tau \lambda_2$, then  
\[
  \ell_\vM(\mu) \leq \ell_\vM(\lambda_1) + \ell_\vM(\lambda_2).
\]
In particular, $\ell_\vM$ is convex.
\end{theo}

The following lemma is well known 

\begin{lemma}\label{lem: it is well known}
Let $\tau$ a train-track, and let $\lambda_1$ and $\lambda_2$ be multi-curves
carried by $\tau$. Then, there exists a multi-curve $\mu$ carried by 
$\tau$ such that $\mu=\lambda_1 + \lambda_2$ in coordinates given by $\tau$. 
Furthermore, $\mu$ can be obtained from $\lambda_1$ and $\lambda_2$ by a 
sequence of surgeries. 
\end{lemma}

We now turn to the proof of the Theorem~\ref{thm:int-convex}. 

\begin{proof}[Proof of Theorem~\ref{thm:int-convex}]
Let $\mathcal C$ be the space of geodesic currents on $\vM$, that is the space 
of $\pi_1(\vM)$-invariant Radon measures on the space of geodesics in $\vM$. 
Recall that the space of measured laminations can be topologically embedded into the 
space of geodesic currents, therefore, we can think of any $\lambda \in \ML$ as
a geodesic current, namely, an element of $\mathcal C$. 
Also recall from \cite{Bonahon:curr} that there is a continuous intersection pairing 
\[
i : \mathcal C \times \mathcal C \to \mathbb R. 
\]
Furthermore, there is a geodesic current $L_\vM \in \mathcal C$ such that for every
\[
i (L_\vM, \lambda) = \ell_\vM(\lambda), \quad \text{ for all } \lambda \in \ML,
\] 
see~\cite{Otal}.
The set of simple closed curves with rational weights are dense in $\ML$. 
Therefore, in view of the continuity of intersection pairing $i$, it is sufficient to check the statement 
of the theorem for rationally weighted simple closed curves only. Since, length
is homogeneous, we can in fact assume the weights are integers or 
$\lambda_1$, $\lambda_2$, and $\mu$ are multi-curves with the possibility 
of some curve appearing more than once. 

\begin{claim}
Assume $\lambda_1$ and $\lambda_2$ are two simple closed curves
with $i(\lambda_1, \lambda_2)>0$. Let $\beta$ be a curve obtained from 
$\lambda_1$ and $\lambda_2$ by a surgery at an intersection point. 
Then, $\ell_\vM(\beta) \leq \ell_\vM(\lambda_1) + \ell_\vM(\lambda_2)$. 
\end{claim}

\begin{proof}[Proof of the claim]
Note that $\lambda_1$ and $\lambda_2$ have unique geodesic representatives in 
$M$. Let $p$ be an intersection point of $\lambda_1$ and $\lambda_2$ where 
the surgery takes place. Then the free homotopy class of $\beta$ can be 
represented by a traversing $\lambda_1$ first (starting from $p$) then $\lambda_2$. 
Which means $\beta$ has a representative whose length is 
$\ell_\vM(\lambda_1) + \ell_\vM(\lambda_2)$. This proves the claim. 
\end{proof} 

Further, we note that, $\mu=\lambda_1 \oplus_\tau \lambda_2$ can be obtained
from $\lambda_1$ and $\lambda_2$ by a sequence of surgery maps, see Lemma~\ref{lem: it is well known}. 
This proves the theorem. 
\end{proof}

Let ${C} \subset {\Bbb R}^{n}$ be a cone and $f: {C} \rightarrow {\Bbb R}$ be a convex function. Let $K$ be a closed and bounded set contained in the relative interior of the domain of $f$. Then $f$ is Lipschitz continuous on $K$. That is: there exists
a constant $\lipc=\lipc(K)$ such that for all $x, y \in K$ we have
\[
|f(x)-f(y)| \leq \lipc |x-y|. 
\] 
Therefore, we have the following. 

\begin{coro}\label{cor:length-lip}
Let $\vM$ be a compact surface equipped with a Riemannian metric of negative curvature. Then
\[
\ell_{\vM}: \ML \rightarrow {\Bbb R}^+
\]
is locally Lipschitz. 
In other words, and in view of the fact that $\ell_\vM(t\,\cdot)=t\ell_\vM(\cdot)$ for all $t>0$, 
we can cover $\ML$ with finitely many cones such that $\ell_{\vM}$ is Lipschitz in each cone. 
\end{coro}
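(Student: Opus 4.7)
The plan is to reduce the statement to Theorem~\ref{thm:int-convex} via the fact recalled just above---that a convex function on a convex set is Lipschitz on compact subsets of the relative interior of its domain---together with the compactness of $\PML$ and the positive $1$-homogeneity $\ell_M(t\lambda)=t\,\ell_M(\lambda)$.

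First, I would invoke Proposition~\ref{prop:qd-tt}: every $\lambda\in\ML$ is an interior point of some train track chart $U(\tau)$. Projecting these interiors to $\PML$ produces an open cover (openness follows from the bilipschitz coordinate changes of Lemma~\ref{lem:p-linear}); since $\PML\cong S^{6g-7}$ is compact, one extracts finitely many train tracks $\tau_1,\ldots,\tau_N$ and compact sets $K_i\subset U(\tau_i)$ lying in the relative interior of $U(\tau_i)$ whose positive cones $C_i:=\bbr_{\geq0}\cdot K_i$ cover $\ML$. A convenient concrete choice is to take each $K_i$ to be a compact cross-section of $C_i$ transverse to the rays (for instance $\{\lambda\in U(\tau_i):\|\lambda\|_{\tau_i}=1\}\cap V_i$ for a suitable compact $V_i$).

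Second, Theorem~\ref{thm:int-convex} says $\ell_M$ is convex on the cone $U(\tau_i)$; combined with the recalled Lipschitz fact, this yields a constant $L_i$ bounding the Lipschitz norm of $\ell_M$ on $K_i$. Positive $1$-homogeneity then propagates the bound to the whole cone $C_i$: writing $\lambda_j=t_j\mu_j$ with $\mu_j\in K_i$, the identity
\[
\ell_M(\lambda_1)-\ell_M(\lambda_2)=(t_1-t_2)\ell_M(\mu_1)+t_2\bigl(\ell_M(\mu_1)-\ell_M(\mu_2)\bigr),
\]
combined with the fact that the $0$-homogeneous function $\lambda\mapsto\ell_M(\lambda)/\|\lambda\|_{\tau_i}$ is continuous and positive on the compact projectivization of $C_i$, yields a global Lipschitz bound for $\ell_M$ on $C_i$ in the ambient linear norm. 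Local Lipschitz continuity of $\ell_M$ on $\ML$ then follows at once.

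The main obstacle is the passage from a Lipschitz bound on the compact cross-section $K_i$ to a Lipschitz bound on the unbounded cone $C_i$: this is the only place where positive $1$-homogeneity enters in an essential way, and it is what forces the splitting of the Lipschitz estimate into radial and angular parts. Everything else reduces cleanly to Theorem~\ref{thm:int-convex} together with compactness of $\PML$.
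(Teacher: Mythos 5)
Your proposal follows the same overall route that the paper uses for this corollary: combine Theorem~\ref{thm:int-convex} (convexity of $\ell_M$ on a train track chart), the standard fact that a convex function is Lipschitz on compact subsets of the relative interior of its domain, compactness of $\PML$, and positive $1$-homogeneity to pass from a cross-section to the whole cone. Your final radial/angular decomposition
\[
\ell_M(\lambda_1)-\ell_M(\lambda_2)=(t_1-t_2)\ell_M(\mu_1)+t_2\bigl(\ell_M(\mu_1)-\ell_M(\mu_2)\bigr),
\]
together with the bounds $|t_1-t_2|\le\|\lambda_1-\lambda_2\|$ and $t_2\|\mu_1-\mu_2\|\le 2\|\lambda_1-\lambda_2\|$ (when $\|\mu_j\|=1$), is correct and usefully spells out a step the paper leaves implicit.

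There is, however, a gap in your covering step, which is the one place where you go beyond the paper's terse ``therefore'' and make a concrete claim. You assert, invoking Proposition~\ref{prop:qd-tt}, that projectivizing the relative interiors of the charts produces an open cover of $\PML$. Proposition~\ref{prop:qd-tt} gives, for a lamination $\lambda$ of type ${\bf a}$, a train track \emph{of the same type} ${\bf a}$ carrying $\lambda$ in its interior. When $\lambda$ is not of the maximal type $(3,\ldots,3)$ --- e.g.\ when $\lambda$ is a simple closed geodesic, or any non-filling lamination --- that train track is not maximal, its chart has dimension strictly less than $6g-6$, and its projectivized interior has empty interior in $\PML$. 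Such a $\lambda$ lies on a proper face of every maximal chart that carries it and is never in the relative interior of a maximal chart. The projectivized interiors of the maximal charts therefore cover only a dense open subset of $\PML$, not all of it, so compactness does not produce the finite cover by compact cross-sections $K_i$ sitting inside relative interiors that your argument needs. The issue is not merely cosmetic: the quoted Lipschitz fact yields a constant that can blow up as the compact set approaches the boundary faces, and convexity plus $1$-homogeneity plus positivity do not preclude this (for instance $f(x,y)=x+y-\sqrt{xy}$ on the closed quadrant is convex, $1$-homogeneous, positive, and continuous, yet its Lipschitz constant on $\{x\ge\delta,\ x+y=1\}$ diverges as $\delta\to0$). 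Some additional input beyond Theorem~\ref{thm:int-convex} is needed to handle the faces; the paper's ``see also~\cite{LS}'' points to Luo--Stong, who give a direct argument with explicit control near the faces.
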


The Lipschitz constant {\em depends} on $\vM$.  See also \cite{LS}.

\section{Linear structure of $\ML$ and $\mathcal Q\mathcal T(S)$}\label{sec:linear-ml-qt}

Our arguments are based on relating the counting problems in $\ML$ to dynamical results in $\Qalpha$.
To that end, we need to compare the linear structure on $\Qalpha$, arising from period coordinates, with the piecewise linear structure on $\ML$, which arises from train track charts. This section establishes required results in this direction.  

From this point to the end of the paper, we will be concerned with the principal stratum, i.e., $\Qalpha$.
Also ${\bf a}=(3,\ldots,3)$ for the rest of the discussion. 

Fix once and for all a collection $\tau_1,\ldots,\tau_{\mathsf c}$ of train tracks 
so that every $\lambda$ can be carried by at least one $\tau_{i}$ for some $i$, see \S\ref{sec:comb-tt}.

Given a point $x=(M,q)\in\Qalpha$ we sometimes use $q$ to denote $x$. 
We fix a fundamental domain for $\Qalpha$,
and unless explicitly stated otherwise, by a lift $\tilde q$ of $q\in\Qalpha$ we mean a representative in this fundamental domain.

Let $x=(M,q)\in\Qalpha$. We denote by $\Rfrak(q^{1/2})$ (resp.\ $\Ifrak(q^{1/2})$) the real (resp.\ imaginary) 
foliation induced by $q$; abusing the notation we will often simply denote these foliations by $\Rfrak(q)$\index{r@$\Rfrak(q^{1/2})$ real foliation of $q$} and $\Ifrak(q)$\index{i@$\Ifrak(q^{1/2})$ imaginary foliation of $q$}. 
Note that $W^{\unst, \stbl}(x)$, which we sometimes also denote by $W^{\unst, \stbl}(q)$, may alternatively be defined as follows.
\[
\mlhoro(q):=\{q'\in\Qalpha: \Ifrak(q')=\Ifrak(q)\}, 
\]
and
$
W^{\stbl}(q):=\{q'\in\Qalpha: \Rfrak(q')=\Rfrak(q)\}.
$

Similarly, we will write $\mathsf B_r(q)$\index{b@$\mathsf B_r(q)$ and $\mathsf B_r^\bullet(q)$} and $\mathsf B_r^\bullet(q)$ for $\mathsf B_r(x)$ and $\mathsf B_r^{\bullet}(x)$, respectively.

Let $\tau$ be a maximal train track, i.e., a train track of type $(3,\ldots,3)$,
and let $\ttc$ be a train track chart, i.e., the set of weights on $\tau$ satisfying the switch conditions. 
Recall from~\S\ref{sec:ttc} that $\ttc$ 
has a linear structure, indeed $\ttc$ is a cone on a finite-sided polyhedron. 
We use the $L^1$-norm on $\cW(\tau)$ to define a norm on $\ttc$.
That is: for every measured lamination $\lambda\in\ttc$, we define $\|\lambda\|_\tau$ \index{n@$\norm{\lambda}_\tau$ and $\norm{\lambda}$ sum of the weights of $\lambda\in U(\tau)$} to be the sum of the weights of $\lambda$. 
Let us define
\be\label{eq:def-P-tau}
\ttco\index{p@$P(\tau)$ polyhedron of laminations whose weights add up to one} :=\{\lambda\in \ttc:\|\lambda\|_\tau=1\}.
\ee

For every $\lambda\in U(\tau)$, define
\[
\bar{\lambda}^{\!\tau}:=\tfrac{1}{\|\lambda\|_\tau}\lambda\in\ttco;
\]
if $\tau$ is fixed and clear from the context, 
we sometimes drop the subscript and the superscript $\tau$ and simply write $\|\lambda\|$ and 
$\bar\lambda$ for $\|\lambda\|_\tau$ and $\bar{\lambda}^{\!\tau},\index{l@ ${\bar{\lambda}}^{\tau}$ and $\bar{\lambda}$}$ respectively. 

By a {\em polyhedron} $\cube\index{u@$\cube$ a polyhedron in $U(\tau)$ of dimension $\dim \ttc-1$}\subset U(\tau)$, 
we mean a polyhedron of dimension $\dim \ttc-1$ 
where the angles are bounded below and the number of facets are bounded, both by constants depending only on the genus.
We will mainly be concerned with $\dim \ttc-1$ dimensional {\em cubes} in the sequel.

\begin{lemma}[Cf.~\cite{LM-Horospherical}, Thm.\ 6.4]\label{lem:ttco-pairing}
Let $\eta\in\ML$ be maximal. 
There is a compact subset $K\subset\Qalpha$, depending on $\tau$ and $\eta$, so that 
$\bhubmas([\eta],\ttco)\subset K$, see~\eqref{eq:def-bhubmas} for the definition of $\bhubmas$. 
\end{lemma}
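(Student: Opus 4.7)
The plan is to deduce the lemma as a soft consequence of the Hubbard--Masur/Gardiner theorem (Theorem~\ref{thm:HubMas}), combined with two structural facts: the compactness of $\ttco$, and the filling property of a maximal measured lamination.

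First I would verify that $\ttco$ is a compact subset of $\ML$. Recall that $\ttc=\iota_{\tau}(\cV(\tau))$, where $\cV(\tau)\subset\cW(\tau)$ is the closed polyhedral cone of non-negative weight systems satisfying the switch conditions, and $\iota_{\tau}$ is continuous. The linear functional $u\mapsto \sum_e u(e)$ is strictly positive on $\cV(\tau)\setminus\{0\}$, so its unit level set meets $\cV(\tau)$ in a compact polytope. Applying $\iota_{\tau}$ gives that $\ttco$ is compact in $\ML$.

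Next I would show that the compact set $\{[\eta]\}\times\ttco$ is disjoint from the diagonal-type set $\Delta\subset\PML\times\ML$. Since $\eta$ is maximal, its complementary regions are (ideal) polygons, so $\eta$ fills $S$ and $\intsc(\sigma,\eta)>0$ for every essential simple closed curve $\sigma$. As every $\lambda\in\ttco$ carries a non-negative transverse measure, $\intsc(\sigma,\eta)+\intsc(\sigma,\lambda)\geq\intsc(\sigma,\eta)>0$ for all such $\sigma$; hence no pair $([\eta],\lambda)$ with $\lambda\in\ttco$ lies in $\Delta$.

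By Theorem~\ref{thm:HubMas}, the induced normalized Hubbard--Masur map $\hubmas : \mathcal Q_1\Teich(S)\to(\PML\times\ML)\setminus\Delta$ is a homeomorphism. Since $\{[\eta]\}\times\ttco$ is a compact subset of its codomain, $\hubmas^{-1}(\{[\eta]\}\times\ttco)$ is compact in $\mathcal Q_1\Teich(S)$. The covering projection $\pi$ is continuous, so
\[
\bhubmas([\eta],\ttco)=\pi\bigl(\hubmas^{-1}(\{[\eta]\}\times\ttco)\bigr)
\]
is compact in $\mathcal Q_1(S)$. Maximality of $\eta$ and of $\tau$ forces the combinatorial type to be $(3,\ldots,3)$, so this compact image lies in $\Qalpha$, and we take $K$ to be this image.

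This is a soft argument and I do not anticipate any genuine analytic obstacle; the only point requiring care is confirming that maximality of $\eta$ yields the filling property $\intsc(\sigma,\eta)>0$ for every essential $\sigma$, since it is exactly this input that keeps $\{[\eta]\}\times\ttco$ off of $\Delta$ and allows the Hubbard--Masur homeomorphism to be invoked.
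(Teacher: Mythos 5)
Your proof is correct, and it takes a cleaner and somewhat different route than the paper's. The paper goes through the fixed finite collection of train tracks $\tau_1,\ldots,\tau_{\mathsf c}$ covering $\ML$: it invokes Lemma~\ref{lem:p-linear} to show $\ttco$ sits inside a finite union of compact ``annuli'' $U_i=\{\lambda\in U(\tau_i):1/L\leq\|\lambda\|_i\leq L\}$ and then defines $K$ to be $\bigcup_i\bhubmas(\{[\eta]\}\times U_i)$, relying implicitly on continuity of $\bhubmas$ and on the stratum containment. Your version bypasses the chart-switching entirely: you observe directly that $\ttco$ is compact (the unit slice of the closed polyhedral cone $\cV(\tau)$ is a compact polytope and $\iota_\tau$ is continuous), you make explicit the $\Delta$-avoidance check that the paper leaves tacit (maximal $\eta$ is filling, so $\intsc(\sigma,\eta)>0$ and hence $\intsc(\sigma,\eta)+\intsc(\sigma,\lambda)>0$ for every essential $\sigma$), and you then push through the Hubbard--Masur homeomorphism and $\pi$. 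Both arguments depend on the same two essential inputs --- continuity provided by Theorem~\ref{thm:HubMas} and the fact that maximality of $\eta$ forces the resulting quadratic differential into the principal stratum --- but yours is self-contained and does not presuppose the global train-track atlas, which the paper presumably sets up because it is reused elsewhere. One small imprecision: you write that ``maximality of $\eta$ and of $\tau$'' forces the combinatorial type $(3,\ldots,3)$, but maximality of $\tau$ is irrelevant here --- a lamination $\lambda\in\ttco$ with some vanishing weights need not be maximal --- and only maximality of $\eta$ is used: since the real foliation has exclusively $3$-pronged singularities, the quadratic differential can have only simple zeros.
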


\begin{proof}
Recall that we fixed a collection $\tau_1,\ldots,\tau_{\mathsf c}$ of train tracks so that 
every lamination $\lambda$ is carried by some $\tau_i$. In view of Lemma~\ref{lem:p-linear}, there exists 
some $L=L(\tau)$ so that
\[
\ttco\subset \bigcup_{i=1}^{\mathsf c}\{\lambda\in U(\tau_i):1/L\leq\|\lambda\|_i\leq L\};
\] 
where $\|\;\|_i=\|\;\|_{\tau_i}$.

For every $1\leq i\leq \mathsf c$, put 
$
U_i:=\{\lambda\in U(\tau_i):1/L\leq\|\lambda\|_i\leq L\}.
$ 
Since $\eta$ is a maximal measured lamination, for any 
$\lambda\in U(\tau_i)$ we have $\bhubmas([\eta],\lambda)\in\Qalpha$.
Define
\be\label{eq:def-K-tt}
K:=\bigcup_{i}\bhubmas(\{[\eta]\}\times U_i).
\ee 
Then $K\subset\Qalpha$ is a compact subset with the desired property.
\end{proof}

\begin{lemma}\label{lem:triangulation}
There is some $\constA\label{A:tgn}\label{A:1ttc}\geq \ref{A:AGY-Mod}$ so that the following holds, see~\eqref{eq:def-r(x)} 
for the definition of $\ref{A:AGY-Mod}$.
Let $q\in\Qalpha$. There exists a 1-complex $T\subset S$ with the following properties. 
\begin{enumerate}
\item Every edge of $T$ is a saddle connection of $q$. 
\item $|\Ifrak(e)|\geq 0.1 \ell_q(e)$ for any $e\in T$.
\item $S\setminus T$ is a union of triangles.
\item For every edge $e\in T$, we have $u(q)^{-\ref{A:tgn}}\leq \ell_{q}(e)\leq u(q)^{-\ref{A:tgn}}$. 
\item We have $u(q)^{-\ref{A:tgn}}\leq r(q)$, moreover, 
the parallel translate of $T$ to $q'\in\mathsf B_{u(q)^{-\ref{A:tgn}}}(q)$ satisfies {\em (1), (2), and (3)} above,
\end{enumerate}
Similar statement holds if we replace $\Ifrak(e)$ in (3) above by $\Rfrak(e)$.
\end{lemma}

\begin{proof}
We will find such a $T$ with $|\Ifrak(e)|\geq0.1 \ell_q(e)$, the proof of the fact that such a $T$ exists with $|\Rfrak(e)|>0.1 \ell_q(e)$
is similar, by replacing $a_tu_s$ with $a_{-t}\bar u_s$ in the following argument.

Let $K$ be the compact set given by Theorem~\ref{thm:fast-return}; 
let $r_0=\inf\{r(x): x\in K\}$, see~\eqref{eq:def-r(x)}. 
For every $q'\in K$, there is a graph $T'\subset S$ of saddle connections of $q'$ so that 
\begin{itemize}
\item the $q'$ length of each of these saddle connections is bounded by $L_0=L_0(K)$, and  
\item $S\setminus T'$ is a union of triangles.
\end{itemize}
We will always assume that $L_0>2$. 
Increasing $L_0$, if necessary, we will also assume that $L_0$ bounds the lengths 
of saddle connections obtained by parallel transporting $T'$ to $q''\in\mathsf B_{r_0}(q')$
for all $q'\in K$.
  
Set $R_q:=\{$saddle connections $\gamma$ of $q$ with $|\Ifrak(\gamma)|<0.1\ell_q(\gamma)\}$.
Note that for all $\gamma\in R_q$ and all $0\leq s\leq 1$, we have $|\Rfrak(u_s\gamma)|\geq \ell_q(\gamma)/2$.
Define the function 
\[
f(q):=\max\{1, \max \{1/\ell_q({\gamma}):\gamma\in R_q\}\}.
\] 

Apply Theorem~\ref{thm:fast-return} with 
$
t_0=L_0\log f(q).
$ 
There exists some
\be\label{eq:fast-return-tgn}
t_0<t\leq \max\{2t_0,\ref{A:fast-return}\log u(q)\}
\ee 
and some $s\in[0,1]$ so that $q'=a_t u_s q\in K$. 

Let now $T'$ be a graph of saddle connections for $q'$ defined as above. 
We claim that for every $e\in T'$, we have $e\not\in a_tu_s R_q$. 
To see the claim, first note that for every $\gamma\in R_q$ we have 
\begin{align*}
\ell_{q'}(a_tu_s\gamma)&\geq e^t\Rfrak(u_s\gamma)\\
&\geq e^{t}\ell_q(\gamma)/2&&\text{$|\Rfrak(u_s\gamma)|\geq \ell_q(\gamma)/2$}\\
&\geq e^{L_0}f(q)\ell_q(\gamma)/2> L_0&&\text{$t>L_0\log f(q)\;\;\&\;\; f(q)\ell_q(\gamma)\geq1$}.
\end{align*} 
Hence $a_tu_s\gamma$ is not contained in $T'$. In consequence, $T=u_{-s}a_{- t}T'$ satisfies (1), (2), (3), and (4). 
Note that for every $e\in T$, we have $u(q)^{-\star}\ll\ell_q(e)\ll u(q)^{\star}$ where the implied constants depend only on the genus.  

We now turn to the proof of part (5). 
First note that there is $N'$ so that $u(q)^{N'}\geq f(q)^{2L_0}$; put 
$N:=\max\{2N',2\ref{A:fast-return}, \ref{A:AGY-Mod}\}$.
Let $\ref{A:tgn}>N$ be so that 
\be\label{eq:choose-M1}
e^{2}\cdot 2^{N-\ref{A:tgn}}\leq r_0/2.
\ee

Let us write $0<r=u(q)^{-\ref{A:tgn}}$, then $0<r\leq r(q)$, recall that $\ref{A:tgn}\geq \ref{A:AGY-Mod}$. 
For every $z\in\mathsf B_r(q)$, we have $z=\Phi^{-1}(\Phi(q')+v)$ where $\|v\|_{\AGY,q}\leq u(q)^{-\ref{A:tgn}}$. 

Let $t\leq \max\{2L_0f(q),\ref{A:fast-return}\log u(q)\}$ and $0\leq s\leq 1$
be so that $q'=a_tu_sq\in K$; see the preceding discussion. 
Note that in view the choice of $t$ and $N$, we have
\be\label{eq:t-est-again}
e^{2t}\leq u(q)^{N}.
\ee
Now for all $v$ so that $\Phi^{-1}(\Phi(q')+v)\in\mathsf B_r(q)$ we have 
\begin{align*}
\|v\|_{\AGY, a_tu_sq}&\leq e^{2+2t}\|v\|_{\AGY, q}
&&\text{by~\eqref{eq:growth-AGY-norm}}\\
&\leq e^2\cdot u(q)^{N}\|v\|_{\AGY,q}
&&\text{by~\eqref{eq:t-est-again}}\\
&\leq e^{2}\cdot u(q)^{N-\ref{A:tgn}}
&&\text{$\|v\|_{\AGY,q}\leq u(q)^{-\ref{A:tgn}}$ by the choice of $r$}\\
&\leq e^{2}\cdot 2^{N-\ref{A:tgn}}\leq r_0/2
&&\text{since $u(q)\geq2$ and using~\eqref{eq:choose-M1}}.
\end{align*}
Hence $a_tu_s\mathsf B_r(q)\subset \mathsf B_{r_0}(q')$ which gives the claim in view of the definitions of $T$ and $T'$.  

Increasing  $\ref{A:tgn}$ if necessary part~(4) also holds for this exponent. 
\end{proof}

\begin{lemma}[Cf.~\cite{M-Earthquake}, Lemma 4.3]\label{lem:box-linear}
Let $q\in\Qalpha$, and let $\tilde q$ be a lift of $q$ in our fixed fundamental domain. Let
$r= 0.01u(q)^{-2\ref{A:tgn}}$, there is a maximal train track $\sigma$ 
with the following properties
\begin{enumerate}
\item $\mathsf B_r(\tilde q)$ projects homeomorphically onto $\mathsf B_r(q)\subset\Qalpha$.
\item The restriction of $\hubmas$ to $\mathsf B_r(\tilde q)$ is a homeomorphism.
\item $\{\Ifrak(\tilde p): \tilde p\in\mathsf B_r(\tilde q)\}$ is contained in one train track chart $U(\sigma)$. 
\item The linear structure on $U_{\Ifrak}(\tilde q):=\{\Ifrak(\tilde p): \tilde p\in\mathsf B_r(\tilde q), \Rfrak(\tilde p)=\Rfrak(\tilde q)\}$ 
as a subset of $U(\sigma)$ agrees with the linear structure on $U_{\Ifrak}(\tilde q)$ which is induced by the restriction of $\hubmas$ to 
$\{\tilde p\in\mathsf B_r(\tilde q): \Rfrak(\tilde p)=\Rfrak(\tilde q)\}\subset W^{\stbl}(\tilde q)$.
\end{enumerate} 
Moreover, the radius $r$ of $\mathsf B_r(\tilde q)$ can be taken to be uniform on compact subsets of $\Qalpha$.   
\end{lemma}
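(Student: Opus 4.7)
The plan is to build the train track $\sigma$ directly from the triangulation furnished by Lemma~\ref{lem:triangulation}, so that the period coordinates (in imaginary part) match exactly the edge weights of $\sigma$. First I would apply Lemma~\ref{lem:triangulation} at $q$ to obtain a saddle-connection 1-complex $T$ whose edges $e_1,\dots,e_{6g-6+2\ipt}$ satisfy $|\Ifrak(e_i)|\geq 0.1\,\ell_q(e_i)$, together with a period box $\mathsf B_r(q)$ of radius $r\geq u(q)^{-\ref{A:tgn}}$ on which the parallel transport of $T$ remains a valid triangulation (this gives (1)). Lifting to the fundamental domain produces $\mathsf B_r(\tilde q)\subset\tAmn$ and, shrinking $r$ if necessary, the restriction of $\hubmas$ is a homeomorphism on $\mathsf B_r(\tilde q)$ by Lemma~\ref{lem:triangulation}(5)(c) --- this yields (2).

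Next, to construct $\sigma$, I would work triangle by triangle: in each triangle of the parallel-transported triangulation, the imaginary foliation $\Ifrak(\tilde q)$ consists of a family of parallel horizontal leaves in the flat $q$-metric; collapsing each such family of parallel leaves to a single arc yields, inside each triangle, a trivalent ``Y''-shaped piece connecting midpoints of the three edges (or the appropriate pair, depending on which vertex of the triangle the horizontal separatrix emanates from). Gluing these pieces along edges of $T$ produces a maximal train track $\sigma$ on $S$. The transversality condition $|\Ifrak(e_i)|\geq 0.1\ell_q(e_i)$ is exactly what is needed to ensure that no edge of $T$ is tangent to the horizontal foliation and that the switch structure at the midpoints of the edges of $T$ is well-defined and generic.

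Now for (3): if $\tilde p\in\mathsf B_r(\tilde q)$ and $r$ is chosen small enough that parallel transport of $T$ remains a triangulation at $\tilde p$ with $|\Ifrak(e_i(\tilde p))|>0$ of the same sign as at $\tilde q$ for every edge, then the same collapsing construction applied to $\Ifrak(\tilde p)$ produces exactly the same combinatorial train track $\sigma$, carrying $\Ifrak(\tilde p)$ with weight on the arc transverse to $e_i$ equal to $|\Ifrak(e_i(\tilde p))|$. Sign-constancy of $\Ifrak(e_i)$ on the box is the only new smallness requirement, and it is implied by the AGY-radius being sufficiently small relative to the bound $0.1\ell_q(e_i)$ from Lemma~\ref{lem:triangulation}(2),(4); since $\ell_q(e_i)\geq A_q^{-1}$ is bounded below uniformly on compact sets, $r$ can be taken uniform on compact subsets of $\Qalpha$.

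Finally (4) is essentially a tautology once the weight identification above is set up: along the stable leaf $\{\tilde p:\Rfrak(\tilde p)=\Rfrak(\tilde q)\}$, the map $\tilde p\mapsto\bigl(\Ifrak(e_i(\tilde p))\bigr)_i$ is the restriction of the (linear) period map to $E^{\stbl}(\tilde q)$, while the train-track-chart coordinates of $\Ifrak(\tilde p)$ in $U(\sigma)$ are, edge by edge, the absolute values $|\Ifrak(e_i(\tilde p))|$, which agree with $\Ifrak(e_i(\tilde p))$ up to a globally constant sign pattern on the box. Thus the identification $U_{\Ifrak}(\tilde q)\hookrightarrow U(\sigma)$ is the restriction of a linear map, proving (4). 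The main obstacle I expect is bookkeeping at the switches of $\sigma$: one has to check that the switch relations imposed by $\sigma$ correspond precisely to the sub-affine-space cut out inside $E^{\stbl}(\tilde q)$ by the requirement that the transverse measure of the horizontal foliation be globally consistent across edges of $T$ --- this is a linear-algebra compatibility between the triangulation's boundary maps and the switch conditions defining $\mathcal W(\sigma)$, and it is where the choice of ``Y''-pieces in each triangle has to be made carefully so that dimensions match.
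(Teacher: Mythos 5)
Your proof is correct and takes essentially the same approach as the paper's: both start from the triangulation $T$ of Lemma~\ref{lem:triangulation}, build a maximal train track $\sigma$ carrying $\Ifrak(\tilde q)$ from the horizontal foliation on $T$ (the paper phrases the construction combinatorially as the dual graph $\sigma'$ of $T$ with one edge removed per triangle, picked out by the additive relation $|\Ifrak(e_{i_1}^\Delta)|=|\Ifrak(e_{i_2}^\Delta)|+|\Ifrak(e_{i_3}^\Delta)|$, while your ``collapse parallel leaves to a Y'' description is the equivalent geometric picture), and then get (3)--(4) from sign-constancy of $\Ifrak(\mathrm{hol}(e))$ across the period box together with linearity of the period map restricted to $E^{\stbl}(\tilde q)$. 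The worry you flag at the end is a non-issue: the switch conditions at the trivalent vertices of $\sigma$ are exactly those triangle relations, which hold automatically for any weight vector coming from periods of a relative cohomology class, so the affine map $E^{\stbl}(\tilde q)\to\mathcal W(\sigma)$ lands in the switch subspace for free.
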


\begin{proof}
Let $T$ be a triangulation of $q$ given by Lemma~\ref{lem:triangulation}.
In particular, 
\begin{enumerate}
\item[(i)] every edge of $T$ is a saddle connection, 
\item[(ii)] $|\Ifrak(e)|\geq 0.1 \ell_q(e)$ for any $e\in T$,
\item[(iii)] $S\setminus T$ is a union of triangles, and
\item[(iv)] $A_q\leq \ell_{q}(e)\leq A_q^{-1}$ for every edge $e\in T$ where $A_q=u(q)^{-\ref{A:tgn}}$
\end{enumerate}
Our construction of the train track $\sigma$ will depend on $T$.

Recall that $r=0.01A_q^{2}$.
Then the balls $\mathsf B_r(\tilde q)$ and $\mathsf B_r(q)$ satisfy (1) and~(2) in the lemma by Lemma~\ref{lem:triangulation}(5).

Let $\sigma'$ be the null-gon dual graph to $T$, in particular, there is one triangle of $\sigma'$ 
in each component of $S\setminus T$.
Let $\sigma$ be the train track obtained from $\sigma'$ 
as follows. If $\Delta$ is a triangle in $T$ with edges $e_1^\Delta,e_2^\Delta, e_3^\Delta$, then there is a
permutation $\{i_1,i_2,i_3\}$ of $\{1,2,3\}$ so that
\be\label{eq:switch-cond}
|\Ifrak(e_{i_1}^\Delta)|=|\Ifrak(e_{i_2}^\Delta)|+|\Ifrak(e_{i_3}^\Delta)|;
\ee
put $\sigma:=\sigma'\setminus \bigcup \{$the edge corresponding to $e_{i_1}^\Delta$ in $\sigma'\}$. 

We claim the lemma holds with $\sigma$. 
To see the claim, first note that $\sigma$ is a maximal train track. 
Assign the weight $|\Ifrak(e_b)|$ to each branch $b\in\sigma$ where $e_b\in T$ 
is the edge which intersects $b$. In view of~\eqref{eq:switch-cond} and the fact that $|\Ifrak({\gamma})|=\intsc({\gamma},\Rfrak(\tilde q))$ for every saddle connection ${\gamma}$, we get that $\lambda=\Ifrak(\tilde q)$ is carried by $\sigma$.
  
By Lemma~\ref{lem:triangulation}, for any $\tilde p\in\mathsf B_r(\tilde q)$ we identify $T$
with its image (under parallel transport) on $\tilde p$. 
Let $\tilde p\in\mathsf B(\tilde q)$ and write $\tilde p=\tilde q+ w$ for some $w$ with $\|w\|_{\AGY,q}\leq 0.01A_q^{2}$. Then 
\[
|\Ifrak({\rm hol}_{\tilde p}(e_b))|=|\Ifrak({\rm hol}_{\tilde q}(e_b))+\Ifrak(w(e_b))|.
\]
Further, we have 
$|w(e_b)|\leq 0.01A_q^{2}\ell_q(e_b)<0.01A_q\leq 0.1|\Ifrak({\rm hol}_{\tilde q}(e_b))|$;
we used (ii) and (iv) in the last inequality. 
Hence, $|\Ifrak({\rm hol}_{\tilde p}(e_b))|>0$ and $\Ifrak(\tilde p)$ is carried by the train track $\sigma$. 

Taking $w\in {\bf i}H_1(M,\Sigma,\bbr)$, the above discussion also implies that $\sigma$ satisfies~(3) and~(4).
\end{proof}

\section{Counting integral points in $\ML$}\label{sec:int-pts}
Let the notation be as in \S\ref{sec:linear-ml-qt}.
In particular, $\tau$ is a maximal train track.
Also recall that $\ttco$ denotes the finite-sided polyhedron in $\ttc$
corresponding to laminations with $\|\lambda\|_\tau=1$.

The smallest $t$ so that a lamination $\lambda\in U(\tau)$ lies in 
\[
[0,e^{t}]\ttco=\{\lambda'\in U(\tau):\|\lambda'\|_\tau\leq e^{t}\}
\] 
can be thought of as a measure of complexity (or length) for the lamination $\lambda$.
In this section we obtain an effective counting result with respect to this complexity. 
In \S\ref{sec:proof-thm} we will use the convexity of the hyperbolic length function in $U(\tau)$ 
to relate the counting problem in Theorem~\ref{thm:main} to this counting problem.

Let $\cube\subset\ttco$ be a cube. For every $t\geq0$, define
\be\label{eq:def-Ip-U}
\Ip(\gamma_0,e^t,\cube)\index{o@$\Ip(\gamma_0,e^t,\cube)$}:=\bigl\{\gamma\in \MC.\gamma_0\cap [0,e^t]\cube\bigr\}.
\ee 

The following strengthening of Theorem~\ref{thm:main-MLS} is the main result of this section.


\begin{theorem}\label{prop:counting}
There exist $\consta\label{a:size-cube}$ and $\consta\label{a:final-U-exp-prop}$ so that the following holds.
Let $t\geq 1$, and let $\cube\subset\ttco$ be a cube of size $\geq e^{-\ref{a:size-cube}t}$. Then  
\[
\#\Ip(\gamma_0,e^t,\cube)={\rm v}(\gamma_0)\mu_{\rm Th}([0,1]\cube){e^{ht}}+ O_{\tau,\gamma_0}(e^{(h-\ref{a:final-U-exp-prop})t})
\]
where ${\rm v}(\gamma_0)$ is defined as in~\eqref{eq:volume-W+} and $h=6g-6$. 
\end{theorem}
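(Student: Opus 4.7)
The plan is to convert the lattice-point count in the train-track cone $[0, e^t]\cube \subset \ttc$ into a horospherical-intersection count in $\Qalpha$ via the Hubbard--Masur map, and then apply Corollary~\ref{cor:nnc-notsmooth}.

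First I would fix a maximal measured lamination $\eta$ (generic with respect to $\MC\cdot\gamma_0$) so that, by Lemma~\ref{lem:ttco-pairing}, $X_0 := \bhubmas(\{[\eta]\} \times \ttco)$ is a compact subset of $\Qalpha$; replacing $\eta$ by $a_{t_0}\eta$ if needed, I may arrange $X_0 \subset K_\alpha$. Pick a lift $\tilde q_0 \in \tAmn$ with $\Ifrak(\tilde q_0) = \gamma_0$ and $[\Rfrak(\tilde q_0)] = [\tilde\eta]$, and let $q_0 \in \Qalpha$ be its projection. The key identity, immediate from $\SL(2,\reals)$-equivariance of $\hubmas$, is
\[
\bhubmas([\eta], c\bar\gamma) = a_{-\log c}\,\bhubmas([\eta], \bar\gamma)\quad\text{for all $c>0$ and $\bar\gamma \in \ttco$.}
\]
Hence $\gamma = c\bar\gamma \in [0, e^t]\cube$ translates into $\bhubmas([\eta], \gamma) \in \bigcup_{s \geq -t} a_s\bhubmas([\eta], \cube)$; using $\MC$-equivariance of $\hubmas$, the count $\#\Ip(\gamma_0, e^t, \cube)$ then matches the number of $\MC$-translates of the $W^u$-leaf through $\tilde q_0$ that meet the compact piece $X := \bhubmas([\eta], \cube)$ of the $W^{\rm cs}$-leaf $L_\eta := \bhubmas(\{[\eta]\} \times \ML)$ under geodesic flow for time at most $t$.

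Next, I would cover $X$ by a bounded-multiplicity collection of pieces $\bhubmas([\eta], \cube_i) \subset \mathsf B^{\rm cs}_{r_i}(z_i)$ inside period boxes on which Lemma~\ref{lem:box-linear} matches the train-track linear structure on $\cube_i$ with the period-coordinate linear structure on $W^{\rm cs}(z_i)$. Take $\phi_i^{\rm cs} \in \mathcal S_{W^{\rm cs}(z_i)}(z_i, r_i)$ approximating the indicator of $\bhubmas([\eta], \cube_i)$, and fix a smooth bump $\psi^u \in C_c^\infty(\mathsf B^u_r(q_0))$ with $\mu^u_{q_0}(\psi^u) = 1$, depending only on $\gamma_0$ and $\tau$. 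A standard Margulis-type unfolding in $\tAmn$ (cf.~\cite[Chap.~6]{Margulis-Thesis}) shows that $\ncc(t, \psi^u, \phi_i^{\rm cs})$ equals, up to $O(1)$ stabilizer contributions absorbed in the choice of $\psi^u$, the number of $\gamma \in \MC\cdot\gamma_0 \cap [0, e^t]\cube_i$. Summing over $i$ and invoking Corollary~\ref{cor:nnc-notsmooth} yields
\[
\#\Ip(\gamma_0, e^t, \cube) = e^{ht}\sum_i \mu^{\rm cs}_{z_i}(\phi_i^{\rm cs}) + O_{\tau,\gamma_0}\bigl(N\, e^{(h-\ref{a:ncc})t}\bigr),
\]
where $N$ is the number of sub-cubes used in the cover.

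Finally, Lemma~\ref{lem:box-linear} identifies $\sum_i \mu^{\rm cs}_{z_i}(\phi_i^{\rm cs})$ with a fixed Jacobian multiple of $\mu_{\rm Th}([0,1]\cube)$, producing the main term ${\rm v}(\gamma_0)\mu_{\rm Th}([0,1]\cube)e^{ht}$. The hypothesis that $\cube$ has side length at least $e^{-\ref{a:size-cube}t}$ bounds $N \leq e^{O(\ref{a:size-cube}t)}$, so with $\ref{a:size-cube}$ small relative to $\ref{a:ncc}$ the aggregate error is $O(e^{(h-\ref{a:final-U-exp-prop})t})$ for an appropriate $\ref{a:final-U-exp-prop}>0$ depending on $\ref{a:ncc}$ and the dimension. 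The main obstacle I anticipate is the local-to-global reconciliation of the two linear structures near each $z_i \in L_\eta$: the piecewise-linear train-track structure on $\ML$ and the period-coordinate structure on $W^{\rm cs}(z_i)$. Lemma~\ref{lem:box-linear} handles this locally, but uniform control of the Jacobians --- and hence identification of ${\rm v}(\gamma_0)$ --- requires the period boxes to stay in a compact part of $\Qalpha$, which is why the non-divergence estimates together with Lemma~\ref{lem:ttco-pairing} and the generic choice of $[\eta]$ are essential.
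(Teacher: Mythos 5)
Your overall plan---translating the cone count in $\ttc$ to an intersection count of $\MC$-translates of $\tmlhoro(\tilde q_0)$ with a piece of the $W^{\rm cs}$-leaf through $\bhubmas([\eta],\cdot)$, then applying Corollary~\ref{cor:nnc-notsmooth}---is the right strategy and essentially what the paper does via Lemma~\ref{lem:int-horo} and Lemma~\ref{lem:number-of-lifts}. However, there is a serious gap in how you set up the correlation sum.

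You fix a single bump $\psi^{\unst}\in C_c^\infty(\mathsf B^{\unst}_r(q_0))$ normalized so that $\mu^{\unst}_{q_0}(\psi^{\unst})=1$, and claim that $\ncc(t,\psi^{\unst},\phi_i^{\rm cs})$ recovers $\#\{\gamma\in\MC\gamma_0\cap[0,e^t]\cube_i\}$ ``up to $O(1)$ stabilizer contributions absorbed in the choice of $\psi^{\unst}$.'' This is not correct. By definition, $\ncc$ sums over $y\in\supp(\psi^{\unst})\subset\mathsf B^{\unst}_r(q_0)$ with $a_t y\in\supp(\phi_i^{\rm cs})$, so it only detects translates whose intersection point with the cone lies over the small ball $\mathsf B^{\unst}_r(q_0)$. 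But the translates $\mce\cdot\tmlhoro(\tilde q_0)$ hitting $\PU$ correspond (via Lemma~\ref{lem:wel-def-mod-stab} and Corollary~\ref{cor:small-chart-trans}) to intersection points ranging over the \emph{entire} fundamental domain $\tilde{\mathsf D}\subset\tW^{\unst}(\tilde q_0)$ for the $\MC$-stabilizer of the leaf, a set of $\mu^{\unst}_{q_0}$-mass $\nu(W^{\unst}(q_0))={\rm v}(\gamma_0)$, not the small ball your $\psi^{\unst}$ covers. Quantitatively, Corollary~\ref{cor:nnc-notsmooth} gives $\ncc(t,\psi^{\unst},\phi_i^{\rm cs})\approx e^{ht}\,\mu^{\unst}_{q_0}(\psi^{\unst})\,\mu^{\rm cs}_{z_i}(\phi_i^{\rm cs})=e^{ht}\mu^{\rm cs}_{z_i}(\phi_i^{\rm cs})$; summing over $i$ produces no factor of ${\rm v}(\gamma_0)$ whatsoever, and there is no Jacobian calculation that could supply it---the period-versus-train-track Jacobian coming from Lemma~\ref{lem:box-linear} is a local quantity with no relation to the global volume of the closed leaf $W^{\unst}(q_0)$. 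The paper's Lemma~\ref{lem:number-of-lifts} instead builds a partition of unity $\{\psi^{\unst}_i\}$ (Lemma~\ref{lem:part-unity-W+}) covering $W^{\unst}(q_0)$; the factor ${\rm v}(\gamma_0)$ arises precisely from $\sum_i\mu^{\unst}_{q_0}(\psi^{\unst}_i)\approx\nu(W^{\unst}(q_0))$ (see~\eqref{eq:nlifts-thick}), and a non-trivial analysis (\eqref{eq:Oy0} and~\eqref{eq:nlifts-0}, via Corollary~\ref{cor:small-chart-trans}) controls over- and under-counting near the boundaries of the covering balls and near the cuspidal part $\mathsf D_{\rm cusp}$ of $W^{\unst}(q_0)$, none of which your argument addresses.

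A secondary issue: one cannot apply the correlation estimate to the full cone $[0,e^t]\cube$ at once, since the flowed region $a_s\bhubmas([\eta],\cube)$ degenerates as $s\to\infty$ and the $\ncc$-asymptotic is only uniform over a bounded range of flow times. The paper decomposes into annular shells of thickness $\epsilon\asymp e^{-\star t}$ (Corollary~\ref{cor:exp-sum-counting}) and sums, bounding the tail $[0,e^{\frac{h-1}{h}t}]\cube$ by the trivial $O(e^{(h-1)t})$ lattice count; your sketch should incorporate a comparable decomposition.
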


The basic tool in the proof of Theorem~\ref{prop:counting} is Proposition~\ref{prop:ncc}. 
We relate the counting problem in Theorem~\ref{prop:counting} 
to a counting problem for translations of $W^{\unst}(q_0)$ in Lemma~\ref{lem:int-horo}.  
Proposition~\ref{prop:ncc} studies a more local version of this latter counting problem. 
That is: one works with translations of a small region in $W^{\unst}(q_0)$. 
Using Corollary~\ref{cor:small-chart-trans}, we will reduce to this local analysis. 
The main step in the proof of Theorem~\ref{prop:counting} is Lemma~\ref{lem:number-of-lifts} below.

Let us begin with some preparation.
Recall that $\ML$ does not have a natural differentiable structure, in particular, $\hubmas$ is only a homeomorphism. 
The situation however drastically improves so long as we restrict to one train track chart and 
fix a transversal lamination. Therefore, we fix a maximal lamination $\eta$ which is carried by $\tau$ for the rest of the discussion.

Let $\delta>0$, and let $\cube\subset\ttco$ be a cube of size $\geq \delta$ centered at $\lambda$. Let $\epsilon\leq \delta$. 
We always assume 
$\hubmas^{-1}$ is a homeomorphism on $\{[\eta]\}\times\{e^r\cube: |r|\leq \delta\}$.
Put $\PU_\cube\index{w@$\PU_\cube$}=\hubmas^{-1}(\{[\eta]\}\times \cube)$ and 
\be\label{eq:def-U-W}
\PU_{\cube,\epsilon}\index{w@$\PU_{\cube,\epsilon}$}=\hubmas^{-1}\bigl(\{[\eta]\}\times\{e^r\cube: -\epsilon\leq r\leq 0\}\bigr).
\ee


Let $\gamma_0\in\ttc$ be a rational multicurve.
For all $t\geq0$ and $0<\epsilon<1$, define 
\be\label{eq:def-Ip}
\Ip(\gamma_0,t,\cube,\epsilon)\index{o@$\Ip(\gamma_0,T,\cube,\epsilon)$}:=\left\{\gamma\in U(\tau)\cap\MC.\gamma_0: e^{t-\epsilon}\leq\|\gamma\|_\tau\leq e^t\text{ and } {\gammabar}^{\tau}\in \cube\right\}.
\ee 

Put $\tilde q_0:=\hubmas^{-1}([\eta],\gammabar_0^\tau)$.  
Without loss of generality we assume $\gamma_0$ and $\eta$ are so that $\tilde q_0$ belongs  to our fixed fundamental domain.



\begin{lemma}\label{lem:int-horo}
Let $\delta>0$, and let $\cube\subset\ttco$ be a cube of size $\geq \delta$. 
Let $\lambda$ denote the center of $\cube$. For all $\epsilon\leq \delta$ and all large enough $t\geq0$ 
we have: 
\[
\text{$\mce\gamma_0\in\Ip(\gamma_0,t,\cube,\epsilon)\;$ if and only if $\;\PU_{\cube,\epsilon} \cap \mce\cdot a_{\mlsign t} \tmlhoro(\tilde q_0)\neq\emptyset$.}
\] 
\end{lemma}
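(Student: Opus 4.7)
The proof is essentially a careful unpacking of definitions using three ingredients: (i) $\hubmas$ is an $\MC$-equivariant homeomorphism (Theorem~\ref{thm:HubMas}), (ii) the Teichm\"{u}ller flow $a_t$ scales the horizontal (resp.\ vertical) foliation of $q^{1/2}$ by $e^t$ (resp.\ $e^{-t}$), and (iii) for $\tilde q \in \tAmn$ and $\mce\in\MC$, $\hubmas(\mce\cdot\tilde q)=(\mce[\Re(\tilde q^{1/2})],\mce\Im(\tilde q^{1/2}))$. I will combine these to identify both sides of the claimed equivalence with the same algebraic condition on $\mce\gamma_0$, up to a shift in $t$ by the fixed constant $\log\|\gamma_0\|_\tau$ that is absorbed into ``large enough $t$.''

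\textbf{Step 1: Identify $\mce\cdot a_t\tmlhoro(\tilde q_0)$ as a level set of $\Im$.} By definition $\tmlhoro(\tilde q_0)=\{\tilde r:\Im(\tilde r^{1/2})=\Im(\tilde q_0^{1/2})=\bar\gamma_0^\tau\}$. Applying $a_t$ scales the vertical foliation by $e^{-t}$, and applying $\mce$ moves the vertical foliation by the $\MC$-action. Hence
\[
\mce\cdot a_t\tmlhoro(\tilde q_0)=\bigl\{\tilde p:\Im(\tilde p^{1/2})=e^{-t}\mce\bar\gamma_0^\tau=e^{-t}\mce\gamma_0/\|\gamma_0\|_\tau\bigr\}.
\]

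\textbf{Step 2: Unpack $\PU_{\cube,\epsilon}$.} Directly from \eqref{eq:def-U-W},
\[
\PU_{\cube,\epsilon}=\bigl\{\tilde p:[\Re(\tilde p^{1/2})]=[\eta],\ \Im(\tilde p^{1/2})\in e^r\cube\text{ for some }r\in(-\epsilon,0]\bigr\}.
\]
Because the homeomorphism $\hubmas$ sends a quadratic differential bijectively to its pair $([\Re],\Im)$, the intersection in question is non-empty iff there exists $r\in(-\epsilon,0]$ such that
\[
e^{-t}\mce\gamma_0/\|\gamma_0\|_\tau \in e^r\cube.
\]
Here I use the assumption, stated at the start of the section, that $\hubmas^{-1}$ is a homeomorphism on $\{[\eta]\}\times\{e^r\cube:|r|\leq\delta\}$, so the $[\Re]=[\eta]$ condition can be satisfied and the conclusion is detected purely from the $\Im$-coordinate.

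\textbf{Step 3: Translate into the conditions defining $\Ip$.} Since $\cube\subset\ttco$ consists of unit-norm laminations, the displayed containment is equivalent to the two conditions
\[
\overline{\mce\gamma_0}^{\,\tau}\in\cube\qquad\text{and}\qquad \|\mce\gamma_0\|_\tau\in\bigl(e^{t-\epsilon},\,e^t\bigr]\cdot\|\gamma_0\|_\tau.
\]
After replacing $t$ by $t+\log\|\gamma_0\|_\tau$ (which is permissible because the statement is for all sufficiently large $t$, and $\|\gamma_0\|_\tau$ is a fixed constant depending only on $\gamma_0$ and $\tau$), this is precisely the defining condition of $\Ip(\gamma_0,t,\cube,\epsilon)$ from \eqref{eq:def-Ip}. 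The implicit multiplicative rescaling of the fundamental-domain lift of $\tilde q_0$ causes no trouble because we are working in the universal cover $\tAmn$ and Corollary~\ref{cor:small-chart-trans} guarantees distinct $\mce$ produce distinct witnesses.

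\textbf{Main subtlety.} There is no deep step: the only bookkeeping issue is tracking the normalization $\|\gamma_0\|_\tau$ (absorbed into the shift of $t$) and making sure that the inverse Hubbard--Masur map is a homeomorphism on the relevant slab $\{[\eta]\}\times\bigcup_{r}e^r\cube$, which was built into the standing assumption on $\cube$ and $\epsilon\leq\delta$.
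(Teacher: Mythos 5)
Your proof is correct in substance and follows essentially the same route as the paper: characterize $\mlhoro$ as a level set of $\Ifrak$, use $\Ifrak(a_t q)=e^{-t}\Ifrak(q)$ and the $\MC$-equivariance of $\hubmas$, and read off the two conditions defining $\Ip$ from the $\Ifrak$-coordinate (with the $[\Rfrak]=[\eta]$ condition supplied for free by the bijectivity of $\hubmas$ on the relevant slab). The paper's proof does exactly this, phrased by introducing $\tilde p'=\hubmas^{-1}([\eta],\bar\gamma)$ and the auxiliary time $t_1=\log\|\gamma\|_\tau$.

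One remark on your Step~3. You correctly observe that the translation yields $\|\mce\gamma_0\|_\tau\in(e^{t-\epsilon},e^t]\cdot\|\gamma_0\|_\tau$, whereas \eqref{eq:def-Ip} asks for $\|\mce\gamma_0\|_\tau\in[e^{t-\epsilon},e^t]$; but the cure you offer --- ``replace $t$ by $t+\log\|\gamma_0\|_\tau$, permissible because the statement is for all sufficiently large $t$'' --- does not literally work, since the lemma asserts an equivalence with the \emph{same} $t$ on both sides, and ``for all large $t$'' does not license a $t$-shift. What you have actually uncovered is a small normalization inconsistency: with $\tilde q_0:=\hubmas^{-1}([\eta],\bar\gamma_0^\tau)$ (normalized $\Ifrak$), one gets $\Ifrak(\mce\tilde q_0)=\mce\gamma_0/\|\gamma_0\|_\tau$, while the paper's computation (via $t_1=\log\|\gamma\|_\tau$) implicitly uses $\Ifrak(\mce\tilde q_0)=\mce\gamma_0$. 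The clean fix is either to normalize $\gamma_0$ so that $\|\gamma_0\|_\tau=1$ once and for all (harmless for the counting, as $\Ip$ scales accordingly), or equivalently to set $\tilde q_0:=\hubmas^{-1}([\eta],\gamma_0)$ unnormalized, which is allowed since the unit-area constraint only projectivizes the $\Rfrak$ factor. Under either convention your Steps~1--3 give exactly the paper's equivalence, and no shift of $t$ is needed.
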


\begin{proof}
Since $\tau$ is fixed throughout, we drop it from the subscript and superscript for the norm and the normalization. 

Suppose  $\gamma=\mce \gamma_0\in\Ip(\gamma_0,t,\cube,\epsilon)$ for some $\mce\in\MC$;
such $\mce$ is not unique, however, for any other $\mce'\in\MC$ with
$\mce \gamma_0=\mce'\gamma_0$ we have $\mce\cdot \tmlhoro(\tilde q_0)=\mce'\cdot\tmlhoro(\tilde q_0)$.
Put $\tilde q=\mce\cdot \tilde q_0$. 
Then $\mce\gamma=\Ifrak(\tilde q)$, moreover, 
\[
\mce\cdot a_{\mlsign t} \tmlhoro(\tilde q_0)=a_{\mlsign t}\tmlhoro(\tilde q).
\]
Recall that $\gammabar\in \cube$ and put $\tilde p':=\hubmas^{-1}([\eta], \gammabar)$.
Then, $\tilde p'\in \PU_\cube$;
moreover, it follows from the definition that $\Ifrak(\tilde p')=\gammabar$. Hence, $\tilde p'\in a_{t_1} \tmlhoro(\tilde q)$ where 
$t_1=\log\|\gamma\|$.

Put $s=t_1-t;$ since $\gamma\in\Ip(\gamma_0,t,\cube,\epsilon)$ we have $-\epsilon\leq s\leq 0$.
We get from the above and the definition of $\PU_{\cube,\epsilon}$ that  
$
a_{s}\tilde p'\in a_{\mlsign t} \tmlhoro(\tilde q)\cap\PU_{\cube,\epsilon}.
$ 
In particular, 
\[
\PU_{\cube,\epsilon}\cap a_{\mlsign t} \tmlhoro(\tilde q)=\PU_{\cube,\epsilon}\cap\mce\cdot a_{\mlsign t} \tmlhoro(\tilde q_0)\neq\emptyset.
\]

Conversely, suppose that for some $\mce\in\MC$ we have 
$\PU_{\cube,\epsilon}\cap\mce\cdot a_{\mlsign t} \tmlhoro(\tilde q_0)\neq\emptyset$.
Put $\gamma=\mce\gamma_0$;
we claim that $\gamma\in\Ip(\gamma_0,t,\cube,\epsilon)$.

Set $\tilde q=\mce\cdot\tilde q_0$. Then $\Ifrak(\tilde q)=\gamma$, and as above we have 
$\mce\cdot a_{\mlsign t} \tmlhoro(\tilde q_0)=a_{\mlsign t}\tmlhoro(\tilde q)$.
Let now $\lambda\in \cube$ and $-\epsilon\leq s\leq0$ be so that
\[
\hubmas^{-1}([\eta],e^{s}\lambda)\in \PU_{\cube,\epsilon}\cap a_{\mlsign t}\tmlhoro(\tilde q).
\]
Let us write $\hubmas^{-1}([\eta],e^s\lambda)=a_t\tilde q'$ where ${\tilde q}'\in \tmlhoro(\tilde q)$.
Then, we have 
\[
e^{-t}\gamma=\Ifrak(a_{t}{\tilde q}')=e^s\lambda\in e^s\cube.
\] 
This gives $\bar\gamma=\lambda$, hence, $\bar\gamma\in \cube$ and $\|\gamma\|=e^{t+s}$; 
we get $\gamma\in\Ip(\gamma_0,t,\cube,\epsilon)$ as we claimed.
\end{proof}

\subsection{Strebel differentials}\label{sec:closed-W-}

Problems related to the existence and uniqueness of Jenkins-Strebel differentials have been extensively studied.

\begin{theorem}[Cf.~\cite{Str}, Thm.~20.3]\label{thm:strabel-thm}
Let $\gamma=\sqcup_{i=1}^d\gamma_i$ be a rational multi-geodesic on $M$, 
and let $r_1,\ldots,r_d$ be positive real numbers. Then there exists a unique holomorphic quadratic differential $q$ on $M$ 
(Jenkins-Strebel differential) with the following properties.
\begin{enumerate}
\item If $\Gamma$ is the critical graph\footnote{Recall that the critical graph of a quadratic differential is the union of the compact leaves of the measured foliation induced by $q$ which contain a singularity of $q$.} of $q$, then $M\setminus\Gamma =\cup_{i=1}^d\Omega_i$, where $\Omega_i$ 
is either empty or a cylinder whose core curve is $\gamma_i$. 
\item If $\Omega_i$ is not empty, it is swept out by trajectories whose $q$ length is $r_i$.
\end{enumerate}
\end{theorem}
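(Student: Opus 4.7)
The plan is to deduce the theorem directly from the classical Jenkins--Strebel extremal-length theory; the statement as formulated is essentially Thm.~20.3 of~\cite{Str}, so at one level the proof is an invocation. For orientation, let me sketch the variational idea underlying Strebel's result.

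Fix the conformal structure on $M$ and consider the family $\mathcal F$ of measurable conformal metrics $\rho\,|dz|$ on $M$ satisfying the length constraints
\[
\int_\alpha \rho\,|dz|\ \ge\ r_i\qquad\text{for every simple closed curve $\alpha$ freely homotopic to $\gamma_i$,}
\]
for $i=1,\dots,d$. Minimizing the area $A(\rho) := \int_M \rho^2\,dA$ over $\mathcal F$ produces, by a standard weak-compactness argument in $L^2$, a minimizer $\rho_0$. The core of Strebel's theorem shows that $\rho_0^2 = |q|$ for a unique holomorphic quadratic differential $q$ on $M$, whose horizontal foliation has closed leaves foliating annuli $\Omega_1,\dots,\Omega_d$ with $\Omega_i$ having core homotopic to $\gamma_i$, and such that the $q$-length of each closed horizontal leaf in $\Omega_i$ equals $r_i$. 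The length constraint is saturated on each realized cylinder precisely because at the extremal $\rho_0$ the inequalities must be equalities (otherwise a small rescaling of $\rho_0$ on that cylinder would decrease the area while preserving the constraints). This produces both parts~(1) and~(2).

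Uniqueness follows from strict convexity of $A$ on $\mathcal F$: if two minimizers $q, q'$ existed, then $|q|^{1/2}$ and $|q'|^{1/2}$ would both minimize $A$ subject to the constraints, forcing the two metrics (hence the two quadratic differentials via regularity) to coincide. The disjointness, essentialness, and pairwise non-homotopy of the $\gamma_i$'s (built into the definition of a multi-geodesic) guarantees that the cylinders $\Omega_i$ are well defined and distinct; if some prescribed $r_i$ is too small relative to the others to admit a positive-height cylinder, the corresponding $\Omega_i$ collapses, accounting for the ``either empty or a cylinder" alternative in~(1).

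The main obstacle in carrying out this argument from scratch is the regularity step — proving that the minimizer $\rho_0$ takes the form $|q|^{1/2}$ for a holomorphic $q$ — which is handled via the reflection principle across the critical graph together with removable-singularity theorems from complex analysis. All of this is executed in~\cite[Ch.~III--IV]{Str} and packaged as Thm.~20.3 there, which is precisely our statement; I therefore invoke it as a black box rather than reproving it.
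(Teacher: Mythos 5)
Your proposal takes essentially the same approach as the paper: both state the result and cite Strebel's Theorem~20.3 as the justification, giving no independent proof. The paper offers nothing beyond the citation (it merely remarks that such existence/uniqueness questions have been extensively studied), so your expository sketch of the minimal-norm variational argument is helpful context rather than a divergence from the paper's route.
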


The following lemma will be used in the sequel.

\begin{lemma}\label{lem:horo}
Let $\gamma\in\ttc$ be rational, and let $\tilde q=\hubmas^{-1}([\eta],\gamma)\in\mathcal Q^1\Teich(\alpha)$
be a quadratic differential so that $\Ifrak(\tilde q)=\gamma;$ 
put $q:=\pi(\tilde q)$. Then 
\begin{enumerate}
\item $\mlhoro(q)\subset\Qalpha$ is a properly immersed, affine submanifold 
which carries a natural finite Borel measure $\nu$. 

\item There exists some $\epsilon_0=\epsilon_0(\tau,\eta,\|\gamma\|_\tau)>0$ so that the following holds. Let $0<\hat\epsilon<\epsilon_0$ and let
\[
\text{$K(\hat\epsilon)\index{k@$K(\epsilon)$ the $\epsilon$-thick part}=\{q:$ all saddle connections on $q$ are $\geq\hat\epsilon\}$.}
\]
Put $\mathsf D(\hat\epsilon)=\mathsf D_{\rm cusp}({\hat\epsilon})\index{D@$\mathsf D_{\rm cusp}({\epsilon})$ the $\epsilon$-thin part of $W^{\unst}(q)$}:=\mlhoro(q)\cap K({\hat\epsilon})^{\complement}$. 
There are constants $\consta\label{a:W-cusp}$ and $\constA\label{A:W-cusp}$, 
and a smooth function $0\leq\psi^{\unst}_{\hat\epsilon}\leq 1$ supported on $\mlhoro(q)$ so that 
\begin{enumerate} 
\item $\Sob(\psi^{\unst}_{\hat\epsilon})\ll {\hat\epsilon}^{-\ref{A:W-cusp}}$,
\item $\|\psi^{\unst}_{\hat\epsilon}\|_{2,\nu}\ll {\hat\epsilon}^{\ref{a:W-cusp}}$,
\item $\psi^{\unst}_{\hat\epsilon}|_{\mathsf D({\hat\epsilon})}=1$, and $\|1_{\mathsf D({\hat\epsilon})}-\psi^{\unst}_{\hat\epsilon}\|_{2,\nu}\ll {\hat\epsilon}^{\ref{a:W-cusp}}$.
\end{enumerate}
In particular, we have $\nu(\mathsf D({\hat\epsilon}))\leq {\hat\epsilon}^{\ref{a:W-cusp}}$ for all small enough ${\hat\epsilon}$.
\end{enumerate}
\end{lemma}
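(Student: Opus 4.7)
The plan is to exploit that $\gamma$ being a rational multicurve forces $q=\pi(\tilde q)$ to be a Jenkins--Strebel differential, which gives a concrete description of $W^\unst(q)$.

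For part~(1), since $\Ifrak(\tilde q)=\gamma=\sum a_i\gamma_i$ is a rational weighted multicurve, Theorem~\ref{thm:strabel-thm} identifies $q$ as a Jenkins--Strebel differential whose critical graph $\Gamma$ decomposes $M\setminus\Gamma$ into cylinders $\Omega_1,\dots,\Omega_d$ with core curves $\gamma_i$ and heights proportional to the $a_i$. In period coordinates the leaf $W^{\unst}(q)=\{q':\Ifrak(q')=\Ifrak(q)\}$ is locally an affine translate of the subspace $E^{\unst}(q)\subset H^1(M,\Sigma;\reals)$ (or its anti-invariant part on the orienting double cover when $\varsigma=-1$). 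The stabilizer of the cylindrical structure in $\MC$ contains the free abelian subgroup generated by the Dehn twists $T_{\gamma_i}$, which act as integer translations along the twist directions of $E^\unst(q)$; together with the finite stabilizer of $(M,q)$ they form a discrete subgroup whose action on $W^\unst(q)$ has compact quotient---the classical compactness of Strebel horospheres. Consequently $W^{\unst}(q)\subset\Qalpha$ is a properly immersed affine submanifold, and Lebesgue measure on $E^{\unst}(q)$ descends to a finite Borel measure $\nu$.

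For the measure bound in part~(2), the heights of the $\Omega_i$, and more generally the imaginary parts of all periods of $q'$, are fixed on $W^{\unst}(q)$. A saddle connection $\delta$ on $q'\in W^{\unst}(q)$ of length $<\hat\epsilon$ must therefore satisfy $|\Rfrak(\mathrm{hol}_{q'}(\delta))|<\hat\epsilon$, while $|\Ifrak(\mathrm{hol}(\delta))|$ is a fixed integer-linear combination of the $a_i$'s. Expressing $\Rfrak(\mathrm{hol}_\cdot(\delta))$ as an affine-linear functional $L_\delta$ in coordinates on a bounded fundamental domain for $\nu$, the number of candidate $\delta$ that could yield a saddle connection shorter than $\hat\epsilon$ at some $q'$ in the fundamental domain is polynomial in $1/\hat\epsilon$ by Masur's quadratic growth, and for each such $\delta$ the slab $\{|L_\delta|<\hat\epsilon\}$ has $\nu$-measure $O(\hat\epsilon)$. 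Summing yields
\[
\nu(\mathsf D_{\rm cusp}(\hat\epsilon))\ll_{\tau,\eta,\|\gamma\|_\tau}\hat\epsilon^{\kappa}
\]
for some $\kappa>0$ depending only on the genus; alternatively, one can route this through the $L^p$-integrability of the Eskin--Masur cusp function $u$ restricted to the closed horospherical orbit.

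To construct $\psi^\unst_{\hat\epsilon}$ satisfying (a)--(c), I would take a maximal $\hat\epsilon^{N}/2$-separated net in $K(\hat\epsilon/2)^\complement\cap W^{\unst}(q)$ for a suitable $N$, apply the partition-of-unity Lemma~\ref{lem:partition-unity-AG} at radius $\hat\epsilon^N$, and sum the resulting bump functions. This produces a smooth function equal to $1$ on $\mathsf D_{\rm cusp}(\hat\epsilon)$ and supported in $\mathsf D_{\rm cusp}(\hat\epsilon/3)$, whose $\Sob$-norm is $\ll\hat\epsilon^{-N_0}$ for some $N_0$ depending only on the genus; properties~(b) and~(c) then follow from $\|\psi^\unst_{\hat\epsilon}\|_{2,\nu}^2\le\nu(\supp\psi^\unst_{\hat\epsilon})\le\nu(\mathsf D_{\rm cusp}(\hat\epsilon/3))\ll\hat\epsilon^\kappa$, after adjusting exponents. \emph{The main obstacle} is the polynomial measure decay asserted above with a genus-only exponent: controlling the Masur-type count of short saddle connections on the Strebel horosphere and bounding the joint contribution of the slabs $\{|L_\delta|<\hat\epsilon\}$ requires careful bookkeeping, and the dependence of the implicit constant on $(\tau,\eta,\|\gamma\|_\tau)$ enters precisely through these counts and the geometry of the cylinder decomposition.
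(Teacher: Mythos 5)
Your broad setup matches the paper for part~(1) --- invoke Strebel's theorem, identify $W^{\unst}(q)\cap\Qalpha$ with the open polytope $\mathcal W=\{(x_{i,j}):\sum_j x_{i,j}=r_i,\;x_{i,j}>0\}$ in period coordinates, and let $\nu$ be pulled-back Lebesgue measure, which is manifestly finite. Two remarks there: the paper's argument for ``properly immersed'' is more direct than yours --- it simply notes that $\MC.\gamma$ is discrete (since $\gamma$ is rational), so $\bigcup_{\mce}\mce\cdot\tmlhoro(\tilde q)$ is closed, and distinct translates of $\tmlhoro(\tilde q)$ are disjoint by definition of the foliation; your appeal to ``compactness of Strebel horospheres'' and a cocompact Dehn-twist stabilizer is not quite right (the leaf runs off to the cusp as $x_{i,j}\to 0$ and is not compact --- only the $\nu$-measure is finite), and in any case it is unnecessary. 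Also note your support containments are inverted: $\mathsf D_{\rm cusp}(\hat\epsilon/3)\subset\mathsf D_{\rm cusp}(\hat\epsilon)$, so the function should equal $1$ on $\mathsf D_{\rm cusp}(\hat\epsilon)$ and be supported in the \emph{larger} set $\mathsf D_{\rm cusp}(C\hat\epsilon)$ for some $C>1$, not the smaller one.

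The real gap is in part~(2). You route the measure estimate through a Masur-type count of short saddle connections, sum over slabs $\{|L_\delta|<\hat\epsilon\}$, and flag the resulting bookkeeping --- ``polynomial in $1/\hat\epsilon$ many $\delta$'s'' --- as the main obstacle. But on a Strebel horosphere there is no such obstacle, and no count is needed: the imaginary periods of all relative homology classes are fixed along $W^{\unst}(q)$, so any saddle connection $\delta$ with $\ell_{q'}(\delta)<\hat\epsilon<\epsilon_0$ (with $\epsilon_0$ smaller than the least nonzero imaginary period) must have $\Ifrak(\hol(\delta))=0$, i.e.\ $\delta$ is horizontal and lies in the critical graph. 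The edges of the critical graph are exactly the coordinates $x_{i,j}$, a \emph{fixed finite set} independent of $\hat\epsilon$. Hence $W^{\unst}(q)\cap K(\hat\epsilon)^\complement\subset\Phi^{-1}\bigl(\{(x_{i,j})\in\mathcal W:\; x_{i,j}<\hat\epsilon\text{ for some }i,j\}\bigr)$, a union of a bounded number of slabs each of Lebesgue measure $\ll\hat\epsilon$, giving $\nu(\mathsf D_{\rm cusp}(\hat\epsilon))\ll\hat\epsilon$ outright (linear, not merely $\hat\epsilon^\kappa$). The construction of $\psi^{\unst}_{\hat\epsilon}$ is then a single application of the partition-of-unity Lemma~\ref{lem:partition-unity-AG} to the annular region $\mathsf D_{\rm cusp}(2\hat\epsilon)\setminus\mathsf D_{\rm cusp}(\hat\epsilon/2)$, patched with the constant $1$ on the inner part; properties~(a)--(c) follow from this and the measure bound. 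In short, your approach to part~(2) would need considerable extra work to close, whereas the observation that short saddle connections on the leaf are confined to the critical graph makes the estimate immediate.
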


\begin{proof}
We first show that $\mlhoro(q)$ is a properly immersed submanifold of $\Qalpha$. 
This is equivalent to showing the following two statements.
\begin{enumerate}
\item[(i)] $\mce_1\cdot\tmlhoro(\tilde q)\cap\mce_2\cdot\tmlhoro(\tilde q)\neq\emptyset$ if and only if $\mce_1\cdot\tmlhoro(\tilde q)=\mce_2\cdot\tmlhoro(\tilde q)$.
\item[(ii)] $\bigcup_{\mce\in\MC} \mce\cdot\tmlhoro(\tilde q)\subset\mathcal Q^1\mathcal T(\alpha)$ is closed.
\end{enumerate}
Recall that $\tmlhoro(\tilde p)=\{{\tilde p}':\Ifrak({\tilde p}')=\Ifrak(\tilde p)\}$
and that $\mce\cdot \tmlhoro(\tilde p)=\tmlhoro(\mce\cdot\tilde p)$ for all $\tilde p\in\mathcal Q^1\mathcal T(\alpha)$.
These imply (i).
To see (ii), note further that the set 
\[
\bigcup_{\mce\in\MC}\mce\cdot\tmlhoro(\tilde q) 
\] 
is the set of quadratic differentials $\tilde p\in\mathcal Q^1\Teich(1,\ldots,1)$ so that $\Ifrak(\tilde p)\in\MC.\gamma$. 
Since $\gamma$ is rational, $\MC.\gamma$ is a discrete $\MC$-invariant set; (ii) follows.

Let $\gamma$ be as in the statement. Write $\gamma=\sum_ia_i\gamma_i$ where each $\gamma_i$ 
is a simple closed curve and $a_i\in\bbq$. 
By Theorem~\ref{thm:strabel-thm} we have: the locus $W^{\unst}(q)\cap\Qalpha$
is identified with a linear subspace $\mathcal W=\{(x_{ij}):\sum_j x_{ij}=r_i, x_{ij}>0\}$ in the period coordinates, where $r_1,\ldots,r_d$
are positive real numbers.
Moreover, the measure $\nu$ is the pull back of the Lebesgue measure from $\mathcal W$ to $W^{\unst}(q)$. This finishes the proof of~(1).

To see part~(2), let $\epsilon_0$ be so that $\bhubmas([\eta],\gamma)\in K(\epsilon_0)$, recall from Lemma~\ref{lem:ttco-pairing}
that $\epsilon_0$ depends only on $\tau$, $\eta$, and $\|\gamma\|_\tau$.
For any $0<\hat\epsilon<\epsilon_0$ put 
\[
\mathcal W(\hat\epsilon)=\{(x_{ij})\in\mathcal W: \text{$0< x_{ij}< \hat\epsilon$ for some $(i,j)$}\}.
\]
Using Theorem~\ref{thm:strabel-thm}, we have $W^{\unst}(q)\cap K(\hat\epsilon)^{\complement}\subset \Phi^{-1}(\mathcal W(\hat\epsilon))$.
The claims in part (2) now follow from Lemma~\ref{lem:partition-unity-AG}. Indeed apply Lemma~\ref{lem:partition-unity-AG}
with $D=\mathsf D(2\hat\epsilon)\setminus\mathsf D(\hat\epsilon/2)$, and let $\{\varphi_i\}$ be the collection of functions
obtained by that lemma. Define 
\[
\psi^{\unst}_{\hat\epsilon}(p)=\begin{cases}\sum\varphi_i(p)&\text{if $p\in W^{\unst}(q)\setminus\mathsf D(\hat\epsilon/2)$}\\ 1&\text{if $p\in \mathsf D(\hat\epsilon/2)$}\end{cases}.
\]
This function satisfies the claims.
\end{proof}



Let $\gamma_0$ and $\tilde q_0\in\mathcal Q^1\mathcal T(1,\ldots,1)$ be as in Lemma~\ref{lem:int-horo}
and put $q_0:=\pi(\tilde q_0)$.
Then by Lemma~\ref{lem:horo} we have $\mlhoro(q_0)$ is an affine submanifold of $\Qalpha$. 
We will put
\be\label{eq:volume-W+}
{\rm v}(\gamma_0)\index{v@${\rm v}(\gamma_0)$ volume of $W^{\unst}(\pi\circ\hubmas^{-1}([\eta],\gamma_0))$ for a rational multigeodesic $\gamma_0$}=\nu(W^{\unst}(q_0))
\ee
where $\nu$ is the finite measure in Lemma~\ref{lem:horo}.

Let $b>0$; this choice will be optimized later. 
Apply Lemma~\ref{lem:horo}(2) with $\hat\epsilon=10b$ and let $\mathsf D_{\rm cusp}(10b)$
be as in that lemma. Put $\mathsf D_b\index{d@ $\mathsf D_b$}:=W^{\unst}(q)\setminus\mathsf D_{\rm cusp}(10b)$.

\begin{lemma}\label{lem:part-unity-W+}
For every $b$ there exists some $N(b)\ll b^{-\constA\label{A:th-W-c}}$ so that the following holds.
There exists a collection of functions $\{\psi^{\unst}_i:0\leq i\leq N(b)\}$ with the following properties:
\begin{enumerate}
\item $\psi^{\unst}_0=\psi^{\unst}_{10b}$ where $\psi^{\unst}_{10b}$ is given by Lemma~\ref{lem:horo}(2).
\item $0\leq \psi^{\unst}_i\leq 1$ for all $i\geq0$.
\item For all $i\geq 1$, $\psi^{\unst}_i$ is supported in $\mathsf B_{b}^{\unst}(y_i)$ where $y_i\in \mathsf D_b$; 
furthermore, the multiplicity of $\{\mathsf B_{b}^{\unst}(y_i)\}$ is at most $\ref{A:part-unity-mult}$.
\item $\sum_{i=1}^{N(b)} \psi^{\unst}_i\leq 1$ and $\sum_{i=1}^{N(b)}\psi^{\unst}_i=1$ on $\cup_{i=1}^{N(b)}\mathsf B_b(y_i)$.
\end{enumerate}
Moreover, we have 
\be\label{eq:u-sob}
\Sob(\psi^{\unst}_i)\leq\ref{A:mul-u-sob} b^{-\ref{A:power-u-sob}}\text{ for all $0\leq i\leq N(b)$}
\ee
where $\constA\label{A:power-u-sob}$ is an absolute constant  
and $\constA\label{A:mul-u-sob}$ is allowed to depend on $q_0$. 
\end{lemma}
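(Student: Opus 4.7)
The plan is to combine two pieces of already-established technology: the cuspidal cutoff $\psi_b^{\unst}$ produced by Lemma~\ref{lem:horo}(2), which absorbs the thin part of $\mlhoro(q_0)$, and a standard Lemma~\ref{lem:partition-unity-AG}-style partition of unity on the compact thick locus $\mathsf D_b$. Counting $N(b)$ then reduces to a volume--packing argument against the finite affine measure $\nu$ on $\mlhoro(q_0)$.

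First I would set $\psi_0^{\unst}:=\psi_b^{\unst}$, the cutoff produced by Lemma~\ref{lem:horo}(2) applied with $\hat\epsilon=10b$. This immediately gives property~(1) and the Sobolev estimate $\Sob(\psi_0^{\unst})\ll b^{-\ref{A:W-cusp}}$, covering the $i=0$ case of~\eqref{eq:u-sob}. Since $\mathsf D_b=\mlhoro(q_0)\setminus\mathsf D_{\rm cusp}(10b)$ is compact, I would then apply Lemma~\ref{lem:partition-unity-AG} on $\mlhoro(q_0)$ with $D:=\mathsf D_b$ and parameter $\epsilon:=b$ to produce a finite family $\{\varphi_i\}_{i=1}^{N(b)}$ of smooth functions satisfying $0\le\varphi_i\le 1$, each supported in $\mathsf B_{b}^{\unst}(y_i)$ for some $y_i\in\mathsf D_b$, with cover-multiplicity at most $\ref{A:part-unity-mult}$, with $\sum_i\varphi_i=1$ on $\mathsf D_b$, and with $\Sob(\varphi_i)\le \ref{A:part-unity-mult}\,b^{-\ref{A:part-unity-exp}}$. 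Setting $\psi_i^{\unst}:=\varphi_i$ for $i\ge 1$ verifies properties~(2)--(4) and supplies~\eqref{eq:u-sob} for these indices.

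The only non-routine step is the bound on $N(b)$. By Lemma~\ref{lem:horo}(1), $\mlhoro(q_0)$ is a properly immersed affine submanifold carrying a finite Borel measure $\nu$. Each center $y_i$ lies in $\mathsf D_b$, hence has systole at least $10b$, so a direct comparison between the AGY-norm at $y_i$ and the linear period coordinates yields a polynomial lower bound $\nu(\mathsf B_{b}^{\unst}(y_i))\gg_{q_0} b^{C}$ for some absolute $C$. Combining this with the bounded multiplicity of the cover gives
\[
N(b)\cdot b^{C}\ll_{q_0}\sum_{i=1}^{N(b)}\nu\bigl(\mathsf B_{b}^{\unst}(y_i)\bigr)\ll \nu\bigl(\mlhoro(q_0)\bigr)<\infty,
\]
so $N(b)\ll_{q_0} b^{-C}$, which is exactly the polynomial bound of the lemma (with $\ref{A:th-W-c}:=C$ absolute and the implicit constant absorbing the $q_0$-dependence into $\ref{A:mul-u-sob}$).

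I expect the only genuine subtlety to be precisely this volume lower bound: as $y_i$ approaches the cuspidal frontier of $\mlhoro(q_0)$, the AGY-norm at $y_i$ degenerates, so the ratio between the affine measure $\nu$ and the AGY-ball volume is not uniform. However, since $y_i\in\mathsf D_b$ has systole at least $10b$, any such degeneration is bounded polynomially in $b^{-1}$, which is still compatible with the polynomial bound on $N(b)$ required by the statement; this is why the exponent $\ref{A:th-W-c}$ is allowed to depend only on the genus while the multiplicative constant $\ref{A:mul-u-sob}$ is permitted to depend on $q_0$.
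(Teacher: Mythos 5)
Your proposal is correct and follows essentially the same route as the paper: the paper's proof is the one-line remark that the lemma ``follows from Lemma~\ref{lem:partition-unity-AG} applied with $D=\mathsf D_b$ and Lemma~\ref{lem:horo}.'' You set $\psi^{\unst}_0=\psi^{\unst}_b$ and produce $\{\psi^{\unst}_i\}_{i\ge 1}$ exactly by that application with $\epsilon=b$, which is what the paper intends. The only content you add is the packing argument for $N(b)\ll b^{-\star}$, and that is the right (and presumably intended) way to justify the bound: on $\mathsf D_b$ the flat systole is $\geq 10b$, so by the AGY comparison estimates (the same ones used inside Lemma~\ref{lem:partition-unity-AG}'s proof, drawn from \cite[Prop.~5.3, 5.5]{AG-Eigen}) the $\AGY$-ball $\mathsf B^{\unst}_b(y_i)$ has $\nu$-measure $\gg b^{C}$ with $C$ depending only on $\dim W^{\unst}(q_0)\le h$, hence only on the genus; bounded multiplicity and $\nu(W^{\unst}(q_0))<\infty$ then give $N(b)\ll_{q_0} b^{-C}$. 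Your remark about the exponent being absolute while the multiplicative constant may depend on $q_0$ matches the conventions $\ref{A:th-W-c}$ versus $\ref{A:mul-u-sob}$ in the statement. No gap.
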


\begin{proof}
This follows from Lemma~\ref{lem:partition-unity-AG} applied with $D=\mathsf D_b$ and Lemma~\ref{lem:horo}. 
\end{proof}

Let us also fix a fundamental domain $\tilde{\mathsf D}\subset \tW^{\unst}(\tilde q_0)$\index{d@$\tilde{\mathsf D}$} which projects to $W^{\unst}(q_0)$.
For each $i\geq 1$, we let $\tilde y_i\in\tilde{\mathsf D}$ be a lift of $y_i$, see Lemma~\ref{lem:part-unity-W+}. 
Let $N'(b)$ be so that 
\be\label{eq:def-Nb'}
\text{$\mathsf B^{\unst}_b(\tilde y_i)\subset\tilde{\mathsf D}$ for all $N'(b)<i\leq N(b)$. }
\ee
For simplicity in notation, let $\mathsf B_b^{\unst}(\tilde y_0)\subset\tilde{\mathsf D}$ denote the lift of $\mathsf D_{\rm cusp}(10b)$. 
Increasing $N'(b)$, if necessary, we assume that $\mathsf B^{\unst}_b(\tilde y_i)\cap \mathsf B_b^{\unst}(\tilde y_0)=\emptyset$ for all 
$i\geq N'(b)$.

\subsection{Counting in linear sectors in $\ML$}\label{sec:linear-count-ML}
Recall from the beginning of this section that $\cube\subset\ttco$ is a box of size $\geq \delta$. 
Let $\lambda$ be the center of $\cube$, and let $\epsilon\leq \delta$. 
Let $\eta\in\ML$ be fixed as in the beginning of this section. 
We always assume $0<\delta<1/2$ and $\eta$ are so that $\hubmas^{-1}$ 
is a homeomorphism on $\{[\eta]\}\times\{e^r\cube: |r|<\delta\}$.
Recall also our notation $\PU_\cube=\hubmas^{-1}(\{[\eta]\}\times \cube)$ and 
\[
\PU_{\cube,\epsilon}=\hubmas^{-1}\bigl(\{[\eta]\}\times\{e^r\cube: -\epsilon<r\leq 0\}\bigr).
\]


Abusing the notation, we denote by $\mu_{\rm Th}(\cube)$ the measure induced
from $\mu_{\rm Th}$ on $\ttco$. 
The following lemma is a crucial step in the proof of Theorem~\ref{prop:counting}.

\begin{lemma}\label{lem:number-of-lifts}
There exist $\consta\label{a:epsilon-exp-final}$ and $\consta\label{a:b-exp-final}$ so that the following holds.
Let $t\geq 0$ and in the above notation, define
\[
\nlifts(\tilde q_0,t,\cube,\epsilon)\index{n@$\nlifts(\tilde q_0,t,\cube,\epsilon)$}:=\bigl\{\mce\cdot\tmlhoro(\tilde q_0): {\mce}\in\MC\text{ and } \PU_{\cube,\epsilon}\cap\mce\cdot a_{\mlsign t}\mlhoro(\tilde q_0)\neq\emptyset\bigr\}.
\]
Suppose $\epsilon\geq e^{-\ref{a:epsilon-exp-final}t}$, then 
\[
\#\nlifts(\tilde q_0,t,\cube,\epsilon)={\rm v}(\gamma_0)\mu_{\rm Th}(\cube)(\tfrac{1-e^{-h\epsilon}}{h})e^{ht}+O_{\tau,\gamma_0}((1-e^{-h\epsilon})e^{(h-\ref{a:b-exp-final})t}).
\]
\end{lemma}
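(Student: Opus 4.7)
The strategy is to rewrite $\#\nlifts$ as a count of intersection points of the horospherical leaf $\mlhoro(q_0)$ (inside the quotient $\Qalpha$, where $q_0=\pi(\tilde q_0)$) with the center-stable piece $\mathsf E:=\pi(\PU_{\cube,\epsilon})$, and then feed this count into Corollary~\ref{cor:nnc-notsmooth}. By Lemma~\ref{lem:int-horo}, $\#\nlifts(\tilde q_0,t,\cube,\epsilon)$ is the number of $\mce\in\MC$ (modulo the stabilizer of $\tmlhoro(\tilde q_0)$) for which $\mce\cdot a_t\tmlhoro(\tilde q_0)$ meets $\PU_{\cube,\epsilon}$. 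Corollary~\ref{cor:small-chart-trans} guarantees that distinct cosets produce distinct intersection points downstairs, so
\[
\#\nlifts(\tilde q_0,t,\cube,\epsilon)=\#\{y\in\mlhoro(q_0):a_ty\in\mathsf E\}.
\]
Note that $\PU_{\cube,\epsilon}$ lies in a single $\tW^{\rm cs}$-leaf of $\tilde q_0$; by Lemma~\ref{lem:box-linear} applied at the center $\lambda_0$ of $\cube$, $\mathsf E$ fits inside a period box of definite radius in $\Qalpha$ and (in the linear coordinates there) is the image of a product of the polyhedron $\cube$ (with the Thurston linear structure) and the center interval $(-\epsilon,0]$; write $1_{\mathsf E}$ as a function $\phi^{\rm cs}\in\mathcal S_{W^{\rm cs}(z)}(z,r')$.

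\textbf{Main term via partition of unity.} Fix a small parameter $b>0$ and apply Lemma~\ref{lem:part-unity-W+} to obtain $\{\psi^{\unst}_i\}_{i=0}^{N(b)}$ on $\mlhoro(q_0)$, with $\psi^{\unst}_0$ supported in the cusp piece $\mathsf D_{\rm cusp}(10b)$ and, for $i\ge 1$, $\psi^{\unst}_i$ supported in a ball $\mathsf B^{\unst}_b(y_i)$ with $y_i$ in a fixed compact set $K_\alpha$ and $\Sob(\psi^{\unst}_i)\ll b^{-\star}$. For each $i\ge 1$, apply Corollary~\ref{cor:nnc-notsmooth} to $\psi^{\unst}_i$ and $\phi^{\rm cs}$ to get
\[
\ncc(t,\psi^{\unst}_i,\phi^{\rm cs})=e^{ht}\,\mu^{\unst}_{q_0}(\psi^{\unst}_i)\,\mu^{\rm cs}_z(\phi^{\rm cs})+O\!\bigl(\Sob(\psi^{\unst}_i)\,e^{(h-\ref{a:ncc})t}\bigr).
\]
Summing over $1\le i\le N(b)$ and using $\sum_{i\ge1}\psi^{\unst}_i=1$ on $\mlhoro(q_0)\setminus\mathsf D_{\rm cusp}(10b)$, one obtains the main term $e^{ht}\mu^{\unst}_{q_0}(\mlhoro(q_0)\setminus\mathsf D_{\rm cusp}(10b))\cdot\mu^{\rm cs}_z(\phi^{\rm cs})$, together with an aggregate error $N(b)\,b^{-\star}e^{(h-\ref{a:ncc})t}\ll b^{-\star}e^{(h-\ref{a:ncc})t}$. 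Lemma~\ref{lem:horo}(2) then replaces $\mu^{\unst}_{q_0}(\mlhoro(q_0)\setminus\mathsf D_{\rm cusp}(10b))$ by ${\rm v}(\gamma_0)+O(b^{\ref{a:W-cusp}})$.

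\textbf{Identifying the measure factor.} Within the period box from Lemma~\ref{lem:box-linear}, the affine measure $\mu$ splits as $d(\text{center})\times d\mu^{\stbl}$, and $\hubmas$ identifies the $\tW^{\stbl}$-coordinates linearly with the train-track chart, so that the conditional $\mu^{\stbl}$-measure matches the Thurston measure on $\ttco$ (up to a constant Jacobian that cancels with the convention in the definition of $\mu^{\rm cs}$). The scaling relation~\eqref{eq:Margulis-measure} applied to the center direction, combined with $\mu_{\rm Th}(e^r\cube)=e^{hr}\mu_{\rm Th}(\cube)$, yields
\[
\mu^{\rm cs}_z(\phi^{\rm cs})=\int_{-\epsilon}^{0}e^{hr}\,dr\cdot\mu_{\rm Th}(\cube)=\tfrac{1-e^{-h\epsilon}}{h}\,\mu_{\rm Th}(\cube).
\]

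\textbf{Cusp contribution and optimization.} For $\psi^{\unst}_0$ the reasoning breaks down since the support is not a small ball in a thick region; instead, the contribution to the count from $y\in\supp(\psi^{\unst}_0)$ with $a_ty\in\mathsf E$ is estimated directly by a thickening/exceptional-returns argument as in the proof of Lemma~\ref{lem:ncc}: points that do not satisfy the non-divergence condition $y\in\mathsf H^{\unst}_t(x,\tilde K_\alpha)$ are controlled by Theorem~\ref{thm:excep-traj} with savings $e^{-0.5t}$, while points in the cusp that do return are controlled by the $L^2$-bound $\|\psi^{\unst}_0\|_{2,\nu}\ll b^{\ref{a:W-cusp}}$ coming from Lemma~\ref{lem:horo}(2); together this yields a cusp contribution $\ll b^{\ref{a:W-cusp}}(1-e^{-h\epsilon})e^{ht}+e^{(h-0.5)t}$. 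Finally, choose $b=e^{-\star t}$ to balance the errors $b^{-\star}e^{(h-\ref{a:ncc})t}$ and $b^{\ref{a:W-cusp}}(1-e^{-h\epsilon})e^{ht}$, obtaining the required power savings. The hypothesis $\epsilon\ge e^{-\ref{a:epsilon-exp-final}t}$ ensures $(1-e^{-h\epsilon})$ is not so small that it is swamped by the $N(b)$-aggregated absolute error. The main obstacle will be the cusp bound: Proposition~\ref{prop:ncc} is not directly applicable, and adapting the thickening proof of Lemma~\ref{lem:ncc} to an $L^2$-small but spatially extended cutoff requires care.
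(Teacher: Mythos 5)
Your proposal follows essentially the same route as the paper's proof: rewrite $\#\nlifts$ as a weighted count of returns of $\mlhoro(q_0)$ into $\mathsf E$, decompose $\mlhoro(q_0)$ via the partition of unity of Lemma~\ref{lem:part-unity-W+}, apply Corollary~\ref{cor:nnc-notsmooth} to each ball-supported $\psi^{\unst}_i$ with $\phi^{\rm cs}$ the (regularized) indicator of $\PU_{\cube,\epsilon}$ viewed in $W^{\rm cs}$ via Lemma~\ref{lem:linear-ml-qt-used}/\ref{lem:box-linear}, identify $\mu^{\rm cs}_p(\phi^{\rm cs})=\frac{1-e^{-h\epsilon}}{h}\mu_{\rm Th}(\cube)$, and then handle the cusp piece separately and optimize $b$. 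The only notable organizational difference is in the treatment of the exceptional balls: you propose Theorem~\ref{thm:excep-traj} plus the $L^2$-bound for $\psi^{\unst}_0$, whereas the paper treats not only the cusp index $i=0$ but also the balls $1\le i\le N(b)'$ whose lifts cross $\partial\tilde{\mathsf D}$ (needed to make the identification between $\nlifts'$ and the weighted count exact in the universal cover), and bounds that contribution, estimate~\eqref{eq:Oy0}, by thickening a cutoff and applying Proposition~\ref{prop:thickening-smooth} rather than Theorem~\ref{thm:excep-traj}; both mechanisms deliver the same quality of power saving, and your intrinsic formulation in the quotient sidesteps most of the $N(b)'$-bookkeeping provided one verifies that the map $\nlifts\to\{y\in\mlhoro(q_0):a_ty\in\mathsf E\}$ is a genuine bijection (which follows from Lemma~\ref{lem:wel-def-mod-stab} together with the injectivity of $\pi$ on the period box containing $\PU_{\cube,\epsilon}$).
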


We will prove Lemma~\ref{lem:number-of-lifts} using Proposition~\ref{prop:ncc}, more precisely Corollary~\ref{cor:nnc-notsmooth}.
In order to use those results we need to control the {\em geometry} of $\PU_{\cube,\epsilon}$.

%
%
%
%

\begin{lemma}\label{lem:linear-ml-qt-used}
The characteristic function of     
\[
\PU_{\cube,\epsilon}=\hubmas^{-1}(\{[\eta]\}\times \{e^s\cube:|s|\leq\epsilon\})
\]
belongs to $\mathcal S_{\tilde W^{\rm cs}(\tilde q_j)}(\tilde p,\epsilon)$ where $\tilde p=\hubmas^{-1}([\eta],\lambda)$.
\end{lemma}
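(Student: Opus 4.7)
The approach is to identify $\PU_{\cube,\epsilon}$ with a piecewise linear region, in period coordinates on $\tilde W^{\rm cs}(\tilde p)$, whose boundary has uniformly bounded combinatorial complexity, and then to apply the standard mollification construction to produce the approximations $\varphi_{\pm,\epsilon'}$ satisfying $(\mathcal S$-1$)$--$(\mathcal S$-3$)$.

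First I would apply Lemma~\ref{lem:box-linear} at the point $\tilde p=\hubmas^{-1}([\eta],\lambda)$ to obtain a period box $\mathsf B_{r_0}(\tilde p)$ and a maximal train track $\sigma$ with the following property: the restriction of $\hubmas$ to the real-foliation slice $\{\tilde p'\in\mathsf B_{r_0}(\tilde p):\Rfrak(\tilde p')=\Rfrak(\tilde p)\}\subset\tilde W^{\stbl}(\tilde p)$ is, in the sense of conclusion (4) of the lemma, an affine isomorphism onto an open subset of $U(\sigma)$. Since $\hubmas$ is a homeomorphism and $\cube$ is a compact subset of $\ttco$, after shrinking $\epsilon$ by a positive constant depending only on $\tau,\gamma_0,\eta$, the entire set $\PU_{\cube,\epsilon}$ lies inside $\mathsf B_{r_0}(\tilde p)$. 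The thickening $\{e^s\cube:|s|\leq\epsilon\}$ on the $\ML$-side corresponds under $\hubmas$ to applying the Teichm\"uller geodesic flow $a_s$; this is the central direction of $\tilde W^{\rm cs}$ and acts linearly in period coordinates.

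Next, by Lemma~\ref{lem:p-linear} the change-of-chart map $\iota_\sigma^{-1}\circ\iota_\tau$ is piecewise linear and bilipschitz on any compact subset of its domain, hence the image of $\bigcup_{|s|\leq\epsilon}e^{s}\cube$ in $U(\sigma)$ is a finite union of convex polytopes whose number of faces and angle lower bounds depend only on $\tau$ and $\sigma$ (equivalently on $\tau,\gamma_0,\eta$) and not on $\cube$ or $\epsilon$. Transporting by $\hubmas$ and using both Lemma~\ref{lem:box-linear}(4) on the $\tilde W^{\stbl}$-factor and the linear action of $a_s$ on the central factor, we conclude that $\PU_{\cube,\epsilon}$, read in the affine period coordinates on $\tilde W^{\rm cs}(\tilde p)$, is a piecewise linear region with boundedly many affine faces, all meeting at angles uniformly bounded away from $0$.

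Finally, for each $\epsilon'\leq\epsilon$, I would define $\varphi_{+,\epsilon'}$ (respectively $\varphi_{-,\epsilon'}$) to be the convolution, in period coordinates on $\tilde W^{\rm cs}(\tilde p)$, of the characteristic function of the $(\epsilon'/2)$-dilate (respectively $(\epsilon'/2)$-contraction) of $\PU_{\cube,\epsilon}$ against a smooth mollifier of scale $\epsilon'/2$. Property $(\mathcal S$-1$)$ is immediate, and property $(\mathcal S$-2$)$ follows from $\Sob(\varphi_{\pm,\epsilon'})\ll\epsilon'^{-L_0}$ with $L_0$ depending only on the genus. Property $(\mathcal S$-3$)$ follows from the bounded-perimeter estimate: the $\epsilon'$-neighborhood of $\partial\PU_{\cube,\epsilon}$ has Lebesgue measure controlled linearly by $\epsilon'$ and by the uniformly bounded $(d-1)$-dimensional measure of $\partial\PU_{\cube,\epsilon}$, which gives an $L^{2}$ bound of the required form once one takes $L_0$ sufficiently large. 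The main obstacle is the geometric transfer step: certifying that the scaling $\lambda\mapsto e^s\lambda$ on $U(\tau)$, together with the cube $\cube$, yields, after $\hubmas$ and the piecewise linear change of train-track chart, a region of bounded combinatorial complexity in period coordinates. This reduces cleanly to the combined use of Lemmas~\ref{lem:box-linear}(4) and~\ref{lem:p-linear} and the identification of the central direction with the Teichm\"uller geodesic flow.
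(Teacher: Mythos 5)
The overall toolkit in your proposal — Lemma~\ref{lem:box-linear} to set up an affine train-track chart compatible with the period coordinates, Lemma~\ref{lem:p-linear} for piecewise-linearity of chart changes, and mollification of a piecewise-linear region of bounded combinatorial complexity — is the same one the paper uses. But there is a genuine gap at the first step.

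You apply Lemma~\ref{lem:box-linear} \emph{only} at the single point $\tilde p=\hubmas^{-1}([\eta],\lambda)$ and then assert that ``after shrinking $\epsilon$ by a positive constant depending only on $\tau,\gamma_0,\eta$, the entire set $\PU_{\cube,\epsilon}$ lies inside $\mathsf B_{r_0}(\tilde p)$.'' This is not justified: $\epsilon$ only controls the thickening in the geodesic-flow direction, not the extent of $\PU_{\cube,\epsilon}$ in the $\stbl$-directions, which is governed by the size of $\cube$. The hypotheses allow $\cube\subset\ttco$ to have size $\geq\delta$ with no upper bound (indeed Theorem~\ref{prop:counting} is stated for cubes of size $\geq e^{-\kappa t}$, including cubes of diameter comparable to $\ttco$), so in general $\hubmas^{-1}(\{[\eta]\}\times\cube)$ does not fit in a single period box, and shrinking $\epsilon$ does nothing to help. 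Moreover even for a small cube, $\cube$ need not lie entirely inside a single chart $U(\sigma)$ associated to one period box.

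The paper deals with this by going global: it first uses Lemma~\ref{lem:ttco-pairing} to place all of $\bhubmas(\{[\eta]\}\times\ttco)$ in a compact set $K$, covers $K$ by finitely many period boxes $\mathsf B_{r_j}(\tilde q_j)$ from Lemma~\ref{lem:box-linear} (with associated train tracks $\sigma_j$), and then decomposes $\cube=\bigcup_j(\cube\cap U(\sigma_j))$. Each piece is piecewise linear by Lemma~\ref{lem:p-linear} and is handled by Lemma~\ref{lem:box-linear}(4) in the corresponding period box, with pieces of size $<\epsilon^N$ discarded; the discarded mass is exactly what the $L^2$-slack in condition ($\mathcal S$-3) is designed to absorb. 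Your argument is missing both the finite cover that makes the statement valid for cubes of arbitrary size and the bookkeeping of small residual pieces near the chart boundaries. With a small cube entirely contained in $\mathsf B_{r_0}(\tilde p)$, the rest of your plan (piecewise-linearity, linearity of $a_s$ in the central direction, mollification with a perimeter bound) would go through, so the issue is specifically this first reduction to a single chart.
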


\begin{proof}
Apply Lemma~\ref{lem:ttco-pairing} with $\tau$
and let $K=K(\tau)$ be defined as in~\eqref{eq:def-K-tt}. Then
\[
\bhubmas([\eta],\ttco)\subset K.
\]

Let $\{\mathsf B_{r_p}(p):p\in K\}$ be the covering of $K$ by period boxes given by Lemma~\ref{lem:box-linear}.
Let $\mathsf B_\cdot(q_1),\ldots,\mathsf B_{\cdot}(q_{\mathsf b'})$ be a finite subcover 
of this covering.
Consider all lifts of $\mathsf B(q_j)$ to period boxes based at lifts $\tilde q_j$ of $q_j$ in our fixed (weak) fundamental domain. 
Denote these lifts by $\mathsf B_{r_1}(\tilde q_1),\ldots,\mathsf B_{r_{\mathsf b}}(\tilde q_{\mathsf b})$ --- note that 
we only fixed a weak fundamental domain, hence there might be more than one lift, however, there is a universal bound on the number of lifts. 

For every $1\leq j\leq \mathsf b$, let $\sigma_j$ be a train track obtained 
by applying Lemma~\ref{lem:box-linear} to $\mathsf B_{r_j}(\tilde q_j)$.
Assume $\epsilon$ is smaller than the radius of $\mathsf B_{r_j}(\tilde q_j)$ for all $j$.
Write $\cube=\cup \hat \cube_{i}$ where 
\[
\hat \cube_{i}=\cube\cap U(\sigma_j).
\] 
By Lemma~\ref{lem:p-linear} each $\hat \cube_{i}$ is a piecewise linear subset of $\cube_i$. 
The claim now follows from Lemma~\ref{lem:box-linear}(4)  
if we ignore those $\hat \cube_{i}$'s which have size less than $\epsilon^N$ for some $N>1$ depending only on the dimension. 
\end{proof}



\begin{proof}[Proof of Lemma~\ref{lem:number-of-lifts}]
Recall that $\lambda$ is the center of $\cube$;
put $\tilde p=\hubmas^{-1}([\eta],\lambda)$ and $p=\pi(\tilde p)$.
Let $\tilde\phi^{\rm cs}$ be the characteristic function of $\PU_{\cube,\epsilon}\subset \tW^{\rm cs}(\tilde p)$.
Define 
\[
\phi^{\rm cs}:=\tilde\phi^{\rm cs}\circ\bigl(\pi^{-1}|_{\pi(\supp (\tilde\phi^{\rm cs}))}\bigr)
\] 
--- the push-forward of $\tilde\phi^{\rm cs}$ to $W^{\rm cs}(p)$.
Recall from Lemma~\ref{lem:linear-ml-qt-used} that $\phi^{\rm cs}\in\mathcal S_{W^{\rm cs}(p)}(p,\epsilon)$.

Recall from \S\ref{sec:notation} that $\mu$ denotes the 
$\SL(2,\reals)$-invariant probability measure on $\mathcal Q_1(1,\ldots,1)$ which is in the Lebesgue measure class. 
The measures $\mu^{\unst}_x$ and $\mu^{\stbl}_x$ are the conditional measures of $\mu$ along $W^{\unst}(x)$ and $W^{\stbl}(x)$; 
$\mu_x^{\rm cs}$ and $\mu^{\rm cu}_x$ are defined accordingly.

%

Recall also that 
$
\mu_{\rm Th}(\{e^s\cube: -\epsilon<s\leq 0\})=\tfrac{1-e^{-h\epsilon}}{h}\mu_{\rm Th}(\cube).
$ 
Therefore, we have 
\be\label{eq:nlifts-Jac-+}
\mu_{p}^{\rm cs}(\phi^{\rm cs})=\tfrac{1-e^{-h\epsilon}}{h}\mu_{{\rm Th}}(\cube).
\ee



For simplicity in notation, let us write $\PU=\PU_{\cube,\epsilon}$ and put 
\[
\nlifts=\nlifts(\tilde q_0,t,\cube,\epsilon).
\]
Let $\mce\in\MC$ be so that  
$
\PU\cap\mce\cdot a_{\mlsign t}\tmlhoro(\tilde q_0)\neq\emptyset.
$
Recall that $\{\mathsf B_i^{\unst}(\tilde y_i):0\leq i\leq N(b)\}$ cover $\tilde{\mathsf D}\subset\tW^{\unst}(\tilde q_0)$, 
see Lemma~\ref{lem:part-unity-W+} and the paragraph following that lemma;
there exists some $\mce'\in\MC$ so that $\mce'\cdot\tmlhoro(\tilde q_0)=\tmlhoro(\tilde q_0)$
and some $0\leq i\leq N(b)$ so that
\be\label{eq:reurn-local}
\PU\cap{\mce}{\mce}'\cdot a_{\mlsign t}\tpbox_{b}^{\unst}(\tilde y_i)\neq\emptyset.
\ee
Let $N'(b)$ be defined in~\eqref{eq:def-Nb'}. We claim that the following holds:
\begin{multline}
\label{eq:Oy0}\#\{\mce\cdot \tmlhoro(\tilde q_0):\text{~\eqref{eq:reurn-local} holds for some $0\leq i\leq N'(b)$ }\}\ll 
\epsilon^{-\star}b^{-\star}{\rm v}(\gamma_0)e^{(h-\ref{a:th-smooth}) t} +\\ b^\star {\rm v}(\gamma_0)e^{ht}
\end{multline}
where the implied constants depend on the genus. 

Let us assume~\eqref{eq:Oy0} and finish the proof.
Let 
\[
\nlifts'\index{n@$\nlifts'$}:=\{\mce\cdot \tW^{\unst}(\tilde q_0)\in\nlifts: \text{~\eqref{eq:reurn-local} does {\em not} hold for any $0\leq i\leq N'(b)$}\} 
\]
i.e, the contribution to $\nlifts$ coming from $N'(b)< i\leq N(b)$. We claim that
\be\label{eq:nlifts-0}
\big|\#\nlifts'-\sum_{i}\sum_y\psi^{\unst}_i(y)\big|\ll \epsilon^{-\star}b^{-\star}{\rm v}(\gamma_0)e^{(h-\ref{a:th-smooth}) t}
+b^\star {\rm v}(\gamma_0)e^{ht}
\ee
where the outer summation is over all $N'(b)<i\leq N(b)$ and the 
inner summation is over all $y\in\tpbox_{b}^{\unst}(y_i)$ so that $a_{\mlsign t}y\in\pi(\PU)$.

To see the claim,
first note that by the definition of $\nlifts'$, if $\mce\cdot \tW^{\unst}(\tilde q_0)\in\nlifts'$,
then~\eqref{eq:reurn-local} holds with some $N'(b)< i\leq N(b)$. 
Let now $\mce_1,\mce_2\in\MC$ and $N'(b)\leq i_1,i_2\leq N(b)$ be so that 
\[
\PU\cap{\mce}{\mce}_j\cdot a_{\mlsign t}\tpbox_{b}^{\unst}(\tilde y_{i_j})\neq\emptyset.
\]
Then $\mce_j\tW^{\unst}(q_0)=\tW^{\unst}(q_0)$ for $j=1,2$, see the discussion preceding~\eqref{eq:reurn-local}; hence by Corollary~\ref{cor:small-chart-trans} we have
\[
\PU\cap{\mce}{\mce}_1\cdot a_{\mlsign t}\tpbox_{b}^{\unst}(\tilde y_{i_1})=\PU\cap{\mce}{\mce}_2\cdot a_{\mlsign t}\tpbox_{b}^{\unst}(\tilde y_{i_2}).
\]
In particular, $\mce_1\tpbox_{b}^{\unst}(\tilde y_{i_1})\cap\mce_2\tpbox_{b}^{\unst}(\tilde y_{i_2})\neq\emptyset$. 
Since $\tpbox_{b}^{\unst}(\tilde y_{i_j})\subset\tilde{\mathsf D}$ for $j=1,2$ --- recall that $N'(b)\leq i_1,i_2\leq N(b)$ --- 
we get that $\mce_1=\mce_2$.
Therefore, 
\[
\PU\cap{\mce}{\mce}_1\cdot a_{\mlsign t}\tpbox_{b}^{\unst}(\tilde y_{i_1})
\] 
corresponds to points lying in the intersection 
$\tpbox_{b}^{\unst}(\tilde y_{i_1})\cap\tpbox_{b}^{\unst}(\tilde y_{i_2})$ but not in $\cup_{i=0}^{N'(b)}\mathsf B_{b}^{\unst}(\tilde y_i)$. 
Recall from Lemma~\ref{lem:part-unity-W+} that $\sum_i \psi^{\unst}=1$ on $\cup_{i=1}^{N(b)}\mathsf B_b(y_i)$, hence 
$\sum_{N'(b)<i\leq N(b)}\psi^{\unst}_i=1$ on 
$\mathsf D_b\setminus \cup_{i=1}^{N'(b)}\mathsf B_b^{\unst}(y_i)$. In particular, since $\psi_i^{\unst}\geq0$, we get that
\[
\#\nlifts'\leq \sum_{i}\sum_y\psi^{\unst}_i(y)
\] 
where the outer summation is over all $N'(b)<i\leq N(b)$ and the 
inner summation is over all $y\in\tpbox_{b}^{\unst}(y_i)$ so that $a_{\mlsign t}y\in\pi(\PU)$.
Moreover, in view of the fact that $\mathsf B^{\unst}_b(\tilde y_i)\cap \mathsf B_b^{\unst}(\tilde y_0)=\emptyset$ for all 
$i\geq N'(b)$ and using Lemma~\ref{lem:part-unity-W+}(2) and~(4), we have 
\[
\sum_{i}\sum_y\psi^{\unst}_i(y)-\#\nlifts'\ll\#\{\mce\cdot \tmlhoro(\tilde q_0):\text{~\eqref{eq:reurn-local} holds for some $1\leq i\leq N'(b)$ }\}
\]
where the implied constant depends on $\alpha$. 
The claim in~\eqref{eq:nlifts-0} thus follows in view of the estimate in~\eqref{eq:Oy0}.

Let us now investigate $\sum_{i}\sum_y\psi^{\unst}_i(y)$. 
Using the definition of $\ncc$ in~\eqref{eq:ncc-r'}, we have 
\begin{align*}
\ncc\bigl(t,\psi^{\unst}_i,\phi^{\rm cs}\bigr)&=\sum \psi^{\unst}_i(y)\phi^{\rm cs}(a_ty)\\
&=\sum\psi_i^{\unst}(y)&&\phi^{\rm cs}(a_ty)=0,1
\end{align*}
where the summations are over all $y\in\tpbox_{b}^{\unst}(y_i)$ so that $a_{\mlsign t}y\in\pi(\PU)=\supp(\phi^{\rm cs})$.
Now apply Corollary~\ref{cor:nnc-notsmooth}, see in particular~\eqref{eq:ncc-ns-final}, with $\psi^{\unst}_i$ and $\phi^{\rm cs}$, 
and get that
\be\label{eq:Nnc-applied}
\Bigl|\sum\psi_i^{\unst}(y)-\mu_{q_0}^{\unst}(\psi_i^{\unst})\mu_p^{\rm cs}(\phi^{\rm cs})e^{ht}\Bigr|\leq
\Sob(\psi_i^{\unst})e^{(h-\ref{a:ncc}) t}.
\ee
In view of~\eqref{eq:nlifts-Jac-+} and the estimate $\Sob(\psi^{\unst}_i)\leq\ref{A:mul-u-sob} b^{-\ref{A:power-u-sob}}$, see~\eqref{eq:u-sob}, we get the following from~\eqref{eq:Nnc-applied}.
\be\label{eq:nlifts-2}
\Bigl|\sum_{y}\psi^{\unst}_i(y)-\mu_{q_0}^{\unst}(\psi^{\unst}_i)\mu_{{\rm Th}}(\cube)(\tfrac{1-e^{-h\epsilon}}{h})e^{ht}\Bigr|\ll\ref{A:mul-u-sob} \epsilon^{-\star} b^{-\star}e^{(h-\ref{a:ncc}) t}.
\ee

Summing up~\eqref{eq:nlifts-2} over all $N'(b)\leq i\leq N(b)$ and using the fact that $N(b)\ll b^{-\star}$, we get that
\be\label{eq:sum-up-0}
|\textstyle\sum_i\textstyle\sum_{y}\psi^{\unst}_i(y)-\sum_{i}\mu_{q_0}^{\unst}(\psi^{\unst}_i)\mu_{{\rm Th}}(\cube)(\tfrac{1-e^{-h\epsilon}}{h})e^{ht}|\ll \ref{A:mul-u-sob} \epsilon^{-\star} b^{-\star}e^{(h-\ref{a:ncc}) t}.
\ee

We now compare $\sum_{i}\mu_{q_0}^{\unst}(\psi^{\unst}_i)$ and ${\rm v}(\gamma_0)$.
Indeed, using Lemma~\ref{lem:horo}, see also~\eqref{eq:volume-W+}, and the relationship between $\nu$ and
$\mu_{q_0}^{\unst}$ we get the following:
\be\label{eq:nlifts-thick}
(1-b^{\ref{a:W-cusp}}){\rm v}(\gamma_0)\leq {\rm v}(\gamma_0)-\nu(\mathsf D_b')\leq \sum_{i=N'(b)}^{N(b)}\mu_{q_0}^{\unst}(\psi^{\unst}_i)\leq {\rm v}(\gamma_0)
\ee
where $\mathsf D_b'=\mathsf D_{\rm cusp}(10b)\cup(\cup_{i=1}^{N'(b)}\mathsf B_b(y_i))$.
The estimate in~\eqref{eq:nlifts-thick} implies that
\begin{multline}\label{eq:sum-up-1}
\bigl|\textstyle\sum_i\textstyle\sum_{y}\psi^{\unst}_i(y)-{\rm v}(\gamma_0)\mu_{{\rm Th}}(\cube)(\tfrac{1-e^{-h\epsilon}}{h})e^{ht}\bigr|\leq b^{\ref{a:W-cusp}}{\rm v}(\gamma_0)\mu_{{\rm Th}}(\cube)(\tfrac{1-e^{-h\epsilon}}{h})e^{ht}\;\;+\\\bigl|\textstyle\sum_i\textstyle\sum_{y}\psi^{\unst}_i(y)-\sum_{i}\mu_{q_0}^{\unst}(\psi^{\unst}_i)\mu_{{\rm Th}}(\cube)(\tfrac{1-e^{-h\epsilon}}{h})e^{ht}\bigr|.
\end{multline}

We now use these estimates to get an estimate for $\#\nlifts'$. First note that
\begin{align}
\notag\bigl|\#\nlifts'-{\rm v}(\gamma_0)\mu_{{\rm Th}}(\cube)(\tfrac{1-e^{-h\epsilon}}{h})e^{ht}\bigr|&\leq\bigl|\#\nlifts'-\textstyle\sum_i\textstyle\sum_{y}\psi^{\unst}_i(y)\bigr|\quad+\\
\notag&\big|\textstyle\sum_i\textstyle\sum_{y}\psi^{\unst}_i(y)-{\rm v}(\gamma_0)\mu_{{\rm Th}}(\cube)(\tfrac{1-e^{-h\epsilon}}{h})e^{ht}\big|\\
\notag{}^{\text{\eqref{eq:nlifts-0}}\leadsto}&\ll\epsilon^{-\star}b^{-\star}{\rm v}(\gamma_0)e^{(h-\ref{a:th-smooth}) t}+b^\star {\rm v}(\gamma_0)e^{ht}\;+\\
\notag&\big|\textstyle\sum_i\textstyle\sum_{y}\psi^{\unst}_i(y)-{\rm v}(\gamma_0)\mu_{{\rm Th}}(\cube)(\tfrac{1-e^{-h\epsilon}}{h})e^{ht}\big|
\end{align}
where the implied constant depends only on the genus. 
This estimate and~\eqref{eq:sum-up-1} imply that
\begin{multline*}
\big|\#\nlifts'-{\rm v}(\gamma_0)\mu_{{\rm Th}}(\cube)(\tfrac{1-e^{-h\epsilon}}{h})e^{ht}\big|\ll\epsilon^{-\star}b^{-\star}{\rm v}(\gamma_0)e^{(h-\ref{a:th-smooth}) t}\;\;+\;\;b^\star {\rm v}(\gamma_0)e^{ht}\qquad+\\
b^{\ref{a:W-cusp}}{\rm v}(\gamma_0)\mu_{{\rm Th}}(\cube)(\tfrac{1-e^{-h\epsilon}}{h})e^{ht}+\big|\textstyle\sum_i\textstyle\sum_{y}\psi^{\unst}_i(y)-\sum_{i}\mu_{q_0}^{\unst}(\psi^{\unst}_i)\mu_{{\rm Th}}(\cube)(\tfrac{1-e^{-h\epsilon}}{h})e^{ht}\big|.
\end{multline*}

Putting this estimate and~\eqref{eq:sum-up-0} together we get that
\begin{multline}\label{eq:nlifts'-estimate}
\big|\#\nlifts'-{\rm v}(\gamma_0)\mu_{{\rm Th}}(\cube)(\tfrac{1-e^{-h\epsilon}}{h})e^{ht}\big|\ll\epsilon^{-\star}b^{-\star}{\rm v}(\gamma_0)e^{(h-\ref{a:th-smooth}) t}+b^\star {\rm v}(\gamma_0)e^{ht} \\ b^{\star}{\rm v}(\gamma_0)\mu_{{\rm Th}}(\cube)(\tfrac{1-e^{-h\epsilon}}{h})e^{ht}+ \ref{A:mul-u-sob} \epsilon^{-\star} b^{-\star}e^{(h-\ref{a:ncc}) t}.
\end{multline}


We now choose 
$\epsilon$ and $b$ of size $e^{-\star t}$ so that $\epsilon^{-\star}b^{-\star}e^{(h-\ref{a:th-smooth}) t}$ in~\eqref{eq:Oy0}
is $<e^{(h-\star)t}$ and so that $\ref{A:mul-u-sob} \epsilon^{-\star} b^{-\star}e^{-\ref{a:ncc} t}$ 
on the right side of~\eqref{eq:nlifts'-estimate} is $<(1-e^{-h\epsilon})e^{-\star t}$. 
The lemma follows from this in view of~\eqref{eq:Oy0}. 

Let us now turn to the proof of~\eqref{eq:Oy0}.
The argument is similar to the one that was used in the proof of~\eqref{eq:ncc-easy}.
For $1\leq i\leq N'(b)$, let $\hat\psi^{\unst}_i$ be so that 
$\supp(\hat\psi^{\unst}_i)\subset\mathsf B_{2b}(y_i)$, $\hat\psi^{\unst}_i|_{\mathsf B_{b}(y_i)}=1$, and $\Sob(\hat\psi^{\unst}_i)\ll b^{-\star}$, see Lemma~\ref{lem:partition-unity-AG}. Let $\hat\psi^{\unst}_0=\psi^{\unst}_0$.

Let $\varrho>0$ be small enough so that $10\varrho$-neighborhood of $\supp(\phi^{\rm cs})$ embeds in $\mathcal Q(1,\ldots,1)$, 
and let $\kappa>0$ be a constant which will be chosen later. 
In view of Lemma~\ref{lem:Margulis-prop}, we have 
\[
1_{\mathsf B_{\varrho}^{\unst}(p)}\in\mathcal S(\mathsf B_{\varrho}^{\unst}(p),\varrho /10).
\] 
Therefore, properties ($\mathcal S$-1), ($\mathcal S$-2), and ($\mathcal S$-2) hold with 
$\epsilon=0.1\varrho e^{-\kappa t}$ and $f=1_{\mathsf B_{\varrho}^{\unst}(p)}$. 
Let $\phi_1^{\unst}=\varphi_{+,0.1\varrho e^{-\kappa t}}$ for these choices. 

Similarly, using Lemma~\ref{lem:Margulis-prop}
(this time, it is applied to the function $\phi^{\rm cs}$ with $\epsilon=0.1\varrho e^{-\kappa t}$) 
we let $\phi_1^{\rm cs}=\varphi_{+,0.1\varrho e^{-\kappa t}}$.

Put $\phi_1:=\phi_1^{\unst}\phi_1^{\rm cs}$. Note that $1_{\mathsf B_{\varrho}^{\unst}(p)}\phi^{\rm cs}\leq \phi_1\leq 1_{\mathsf B_{2\varrho}^{\unst}(p)}\phi^{\rm cs}$. 
Therefore, 
\be\label{eq:mu-phi1-mup-u}
\mu_p^{\unst}\bigl(\mathsf B_{\varrho}^{\unst}(p)\bigr)\mu_p^{\rm cs}(\phi^{\rm cs})\leq \mu(\phi_1)\leq \mu_p^{\unst}\bigl(\mathsf B_{2\varrho}^{\unst}(p)\bigr)\mu_p^{\rm cs}(\phi^{\rm cs}).
\ee
Moreover, $\mu_p^{\unst}(\phi_1)\geq \mu_p^{\unst}(\mathsf B_{\varrho}^{\unst}(p))$.

Since $\hat\psi^{\unst}_i|_{\mathsf B_b(y_i)}=1$ and 
$\mu_p^{\unst}(\phi_1)\geq \mu_p^{\unst}(\mathsf B_{\varrho}^{\unst}(p))$, we have
\begin{multline*}
\#\{ \mce\cdot\mlhoro(\tilde q_0):\text{~\eqref{eq:reurn-local} holds with $0\leq i\leq N'(b)$}\}\ll \\\tfrac{e^{ht}}{\mu_p^{\unst}(\mathsf B_{\varrho}^{\unst}(p))}\sum_i\int_{\mlhoro(q_0)}\phi_1(a_{\mlsign t}y)\hat\psi^{\unst}_i(y)\dif\!\mu_{q_0}^{\unst}(y).
\end{multline*}
Moreover, by Proposition~\ref{prop:thickening-smooth} we have 
\[
\int_{\mlhoro(q_0)}\phi_1(a_{\mlsign t}y)\hat\psi^{\unst}_i(y)\dif\!\mu_{q_0}^{\unst}(y)=
\mu(\phi_1)\mu_{q_0}^{\unst}(\psi^{\unst}_i)+O(\Sob(\psi^{\unst}_i)\Sob(\phi_1)e^{(h-\ref{a:th-smooth}) t})
\]
for all $0\leq i\leq N'(b)$.

Combining these two estimates and using the fact that in view of the estimates in~\eqref{eq:mu-phi1-mup-u} we have 
$\mu(\phi_1)/\mu_p^{\unst}(\mathsf B_\varrho(p))\ll1$ we conclude that
\begin{multline}\label{eq:proof-of-eq:Oy0}
\#\{ \mce\cdot\mlhoro(\tilde q_0):\text{~\eqref{eq:reurn-local} holds with $0\leq i\leq N'(b)$}\}\ll e^{ht}\sum\mu_{q_0}^{\unst}(\psi^{\unst}_i)+\\O(\Sob(\psi^{\unst}_i)\Sob(\phi_1)e^{(h-\ref{a:th-smooth}) t})N'(b).
\end{multline}

In view of \eqref{eq:u-sob} we have $\Sob(\psi^{\unst}_i)\ll b^{-\star}{\rm v}(\gamma_0)$; moreover, $\Sob(\phi)\ll\epsilon^{-\star}$ and $N'(b)\ll N(b)\ll b^{-\star}$. Recall also from ~\eqref{eq:nlifts-thick} that $\sum_{i=0}^{N'(b)}\mu_{q_0}^{\unst}(\psi^{\unst}_i)\ll b^{\ref{a:W-cusp}}{\rm v}(\gamma_0)$.

If we now choose $\kappa$ small enough,~\eqref{eq:Oy0} follows from~\eqref{eq:proof-of-eq:Oy0} and the proof of complete. 
\end{proof}

\begin{corollary}\label{cor:exp-sum-counting}
There exist some $\consta\label{a:epsilon-exp-final-2}$ and $\consta\label{a:epsilon-exp-final-1}$ so that the following holds.
Let $t\geq0$ and let $\epsilon\geq e^{-\ref{a:epsilon-exp-final-2}t}$. Then 
\be\label{eq:exp-sum-counting}
\#\Ip(\gamma_0,t,\cube,\epsilon)={\rm v}(\gamma_0)\mu_{\rm Th}(\cube)(\tfrac{1-e^{-h\epsilon}}{h})e^{ht}+ O_{\gamma_0}((1-e^{-h\epsilon})e^{(h-\ref{a:epsilon-exp-final-1})t})
\ee
where as in~\eqref{eq:def-Ip} we have 
\[
\Ip(\gamma_0,t,\cube,\epsilon)=\{\gamma\in \MC.\gamma_0\cap \bigl([0,e^t]\cube\setminus [0,e^{t-\epsilon} ]\cube\bigr)\}.
\]
\end{corollary}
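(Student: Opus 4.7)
The plan is to deduce the corollary as an essentially immediate combination of Lemma~\ref{lem:int-horo} (which translates the counting of laminations into a counting of Mod-translates of $\tmlhoro(\tilde q_0)$ meeting the sector $\PU_{\cube,\epsilon}$) and Lemma~\ref{lem:number-of-lifts} (which provides the asymptotic count with an effective error term for the latter). The only thing to check, beyond invoking these two results, is that the two counts agree; i.e.\ that the natural map
\[
\Ip(\gamma_0,t,\cube,\epsilon)\longrightarrow \nlifts(\tilde q_0,t,\cube,\epsilon),\qquad \mce\gamma_0\mapsto \mce\cdot\tmlhoro(\tilde q_0)
\]
is well-defined and bijective.

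First I would verify the definition makes sense: by Lemma~\ref{lem:int-horo}, if $\gamma=\mce\gamma_0\in\Ip(\gamma_0,t,\cube,\epsilon)$ then the coset $\mce\cdot\tmlhoro(\tilde q_0)$ meets $\PU_{\cube,\epsilon}$ under $a_{\mlsign t}$, so it lies in $\nlifts(\tilde q_0,t,\cube,\epsilon)$; and if $\mce_1\gamma_0=\mce_2\gamma_0$ then $\mce_2^{-1}\mce_1\in\Stab_{\MC}(\gamma_0)$, which preserves the set $\{\tilde p:\Ifrak(\tilde p)=\gamma_0\}=\tmlhoro(\tilde q_0)$, so the map is well-defined. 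For surjectivity, any $\mce\cdot\tmlhoro(\tilde q_0)\in\nlifts$ yields $\gamma=\mce\gamma_0$ lying in $\Ip(\gamma_0,t,\cube,\epsilon)$ by the ``only if'' direction of Lemma~\ref{lem:int-horo}. For injectivity, if $\mce_1\cdot\tmlhoro(\tilde q_0)=\mce_2\cdot\tmlhoro(\tilde q_0)$, then $\mce_2^{-1}\mce_1$ stabilizes $\tmlhoro(\tilde q_0)$, hence preserves the value $\Ifrak(\tilde q_0)=\gamma_0\in\ML$, so $\mce_1\gamma_0=\mce_2\gamma_0$. Thus $\#\Ip(\gamma_0,t,\cube,\epsilon)=\#\nlifts(\tilde q_0,t,\cube,\epsilon)$.

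Having identified the two counts, I would invoke Lemma~\ref{lem:number-of-lifts} with the same parameters (choosing $\ref{a:epsilon-exp-final-2}:=\ref{a:epsilon-exp-final}$ and $\ref{a:epsilon-exp-final-1}:=\ref{a:b-exp-final}$), yielding
\[
\#\Ip(\gamma_0,t,\cube,\epsilon)={\rm v}(\gamma_0)\mu_{\rm Th}(\cube)\,\tfrac{1-e^{-h\epsilon}}{h}\,e^{ht}+O_{\tau,\gamma_0}\bigl((1-e^{-h\epsilon})e^{(h-\ref{a:epsilon-exp-final-1})t}\bigr),
\]
which is precisely~\eqref{eq:exp-sum-counting}.

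I do not expect any real obstacle here: once Lemma~\ref{lem:int-horo} and Lemma~\ref{lem:number-of-lifts} are in hand, the corollary is a bookkeeping exercise, with the only subtle point being the stabilizer identification above. The genuine work has already been done in establishing Lemma~\ref{lem:number-of-lifts}, where the thickening/mixing estimates of Proposition~\ref{prop:ncc} and the partition of unity along $W^{\unst}(q_0)$ (Lemma~\ref{lem:part-unity-W+}) together with the cusp decay (Lemma~\ref{lem:horo}(2)) are combined.
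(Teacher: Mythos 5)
Your proposal is correct and follows essentially the same route as the paper: translate the count via Lemma~\ref{lem:int-horo} to $\#\nlifts(\tilde q_0,t,\cube,\epsilon)$ and then apply Lemma~\ref{lem:number-of-lifts}. The paper states the bijection more tersely (and conservatively picks $\ref{a:epsilon-exp-final-2}=\ref{a:epsilon-exp-final}/2$ rather than $\ref{a:epsilon-exp-final}$), but your stabilizer check ($\mce$ preserves $\tmlhoro(\tilde q_0)$ if and only if $\mce\gamma_0=\gamma_0$) is exactly the implicit content of the paper's "it suffices to show" step.
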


\begin{proof}
We will show this holds with $\ref{a:epsilon-exp-final-2}=\ref{a:epsilon-exp-final}/2$.
By Lemma~\ref{lem:int-horo} we have $\gamma\in\Ip(\gamma_0,e^t,\cube,\epsilon)$ if and only if
\[
\mce\cdot a_{\mlsign t} \tmlhoro(\tilde q_0)\cap \PU_{\cube,\epsilon}\neq\emptyset.
\] 
Therefore, it suffices to show that
\[
\#\nlifts(\tilde q_0,t,\cube,\epsilon)={\rm v}(\gamma_0)\mu_{\rm Th}(\cube)(\tfrac{1-e^{-h\epsilon}}{h})e^{ht}+O_{\gamma_0}((1-e^{-h\epsilon})e^{(h-\star)t}).
\]
This last statement is proved in Lemma~\ref{lem:number-of-lifts}.
\end{proof}

\begin{proof}[Proof of Theorem~\ref{prop:counting}]
Let $\epsilon\geq e^{-\ref{a:epsilon-exp-final-2}t}$, and
for every $n\geq 0$ define $t_n:=t-n\epsilon $.
Then~\eqref{eq:exp-sum-counting} applied with $t=t_n$ implies that
\begin{align*}
\#\Ip(\gamma_0,t_n,\cube,\epsilon)&={\rm v}(\gamma_0)\mu_{\rm Th}(\cube)(\tfrac{1-e^{-h\epsilon}}{h})e^{ht_n}+ O_{\gamma_0}((1-e^{-h\epsilon})e^{(h-\ref{a:epsilon-exp-final-1})t_n})\\
&={\rm v}(\gamma_0)\mu_{\rm Th}(\cube)(\tfrac{e^{-nh\epsilon}-e^{(-n-1)h\epsilon}}{h})e^{ht}+O_{\gamma_0}((1-e^{-h\epsilon})(e^{(h-\ref{a:epsilon-exp-final-1})t-(h-\ref{a:epsilon-exp-final-1}n\epsilon}).
\end{align*}
Summing these up over all $n\geq0$ so that ${t_n}\geq {\frac{h-1}{h}t}$ we get that
\[
\#\{\gamma\in \MC.\gamma_0\cap ([0,e^t]\cube\setminus [0,e^{\frac{h-1}{h}t}]\cube)\}=
{\rm v}(\gamma_0)\mu_{\rm Th}(\cube)(\tfrac{1-e^{\frac{h-1}{h}t}}{h})e^{ht}+O_{\gamma_0}(e^{(h-\star)t}).
\]
This implies the proposition --- note that by basic lattice point count in Euclidean spaces\footnote{As we remarked in the introduction, the point here is that we are counting the number of point in one $\MC$-orbit.}, 
we have the number of integral points $\gamma\in \ttc$ so that $\|\gamma\|\leq e^{\frac{h-1}{h}t}$ is $\ll e^{(h-1)t}$.
\end{proof}


\section{Proof of Theorem~\ref{thm:main}}\label{sec:proof-thm}

We are now in the position to prove Theorem~\ref{thm:main}. The proof relies on Theorem~\ref{prop:counting}.
We cover $\ML$ with finitely many train track charts $U(\tau_1),\ldots, U(\tau_{\mathsf c})$.
Using the convexity of the hyperbolic length function, we can reduce the counting problem in Theorem~\ref{thm:main}
to an orbital counting in sectors on $U(\tau_i)$, with respect to linear structure, where the length function $\ell_\vM$ 
is well approximated by the $\|\;\|_{\tau_i}$. 
Theorem~\ref{prop:counting} is then brought to bear in the study of the latter counting problem.    

Let $\vM$ be a compact surface equipped with a Riemannian metric of negative curvature.
Recall that $\ell_\vM:\ML\to\ML$ denotes the length function. 
It satisfies $\ell_\vM(t\lambda)=t\ell_\vM(\lambda)$ for any $t>0$.

Let $\tau$ be a maximal train track.
By Corollary~\ref{cor:length-lip}, $\ell_\vM$ is Lipschitz in $U(\tau)$.
Let $\lipc_\tau$\index{l@ $\lipc_\tau$ the Lipschitz constant of $\ell_\vM$ in $U(\tau)$} be the Lipschitz constant, hence 
\be\label{eq:hyp-length-lip}
|\ell_\vM(\lambda)-\ell_\vM(\lambda')|\leq \lipc_\tau\|\lambda-\lambda'\|_\tau.
\ee 
Recall that $U(\tau)$ is a cone on the polyhedron $\ttco$.

\begin{lemma}\label{lem:support-plane}
There exists a constant $\hat\lipc_\tau$, depending on $\lipc_\tau$, with the following property.
For every $\lambda,\lambda'\in\ttco$ we have 
$
|\tfrac{1}{\ell_\vM(\lambda)}-\tfrac{1}{\ell_\vM(\lambda')}|\leq \hat\lipc_\tau\delta.
$
\end{lemma}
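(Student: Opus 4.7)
The plan is to combine the Lipschitz estimate~\eqref{eq:hyp-length-lip} for $\ell_M$ with a uniform positive lower bound for $\ell_M$ on the compact polyhedron $\ttco$. The statement implicitly requires $\|\lambda-\lambda'\|_\tau\leq \delta$; with that hypothesis the result follows from a short calculation.

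First I would establish a uniform lower bound $m_\tau:=\inf_{\lambda\in \ttco}\ell_M(\lambda)>0$. The polyhedron $\ttco=\{\lambda\in \ttc:\|\lambda\|_\tau=1\}$ is a compact finite-sided polyhedron, since it is the intersection of the closed positive cone $\cV(\tau)$ (cut out by finitely many linear inequalities on the edge weights) with the affine hyperplane $\sum_e u(e)=1$. The hyperbolic length function $\ell_M$ is continuous on $\ML$ (indeed Lipschitz on $U(\tau)$ by Corollary~\ref{cor:length-lip}) and vanishes only at the zero measured lamination. Since every $\lambda\in \ttco$ has $\|\lambda\|_\tau=1\neq 0$, we have $\ell_M(\lambda)>0$ on $\ttco$; by compactness the infimum $m_\tau$ is attained and is strictly positive.

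Next, using~\eqref{eq:hyp-length-lip} (and the hypothesis $\|\lambda-\lambda'\|_\tau\leq \delta$) I would compute
\[
\left|\tfrac{1}{\ell_M(\lambda)}-\tfrac{1}{\ell_M(\lambda')}\right| \;=\; \frac{|\ell_M(\lambda')-\ell_M(\lambda)|}{\ell_M(\lambda)\,\ell_M(\lambda')} \;\leq\; \frac{\lipc_\tau\,\|\lambda-\lambda'\|_\tau}{m_\tau^{2}} \;\leq\; \frac{\lipc_\tau}{m_\tau^{2}}\,\delta.
\]
Setting $\hat\lipc_\tau:=\lipc_\tau/m_\tau^{2}$ gives the claim.

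There is no real obstacle here: the only non-routine input is positivity of $m_\tau$, which is immediate from compactness of $\ttco$ and the fact that a nonzero measured lamination has positive hyperbolic length. The role of this lemma in the sequel is to convert a linear (i.e., $\|\cdot\|_\tau$-based) perturbation on $\ttco$ into a uniformly small perturbation of the reciprocal hyperbolic length, which is precisely what is needed to compare the $L$-ball $\{\ell_M(\gamma)\leq L\}$ to the $\|\cdot\|_\tau$-sectors counted by Theorem~\ref{prop:counting}.
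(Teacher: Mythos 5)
Your proof is correct and matches the paper's: both establish a uniform positive lower bound for $\ell_M$ on the compact polyhedron $\ttco$ and then combine it with the Lipschitz estimate~\eqref{eq:hyp-length-lip}. You also rightly note that the statement implicitly assumes $\|\lambda-\lambda'\|_\tau\leq\delta$, a hypothesis the paper's own terse proof leaves unstated.
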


\begin{proof}
First note that there exists some $\ell_{\vM,\tau}>1$\index{l@ $\ell_{\vM,\tau}$} so that $1/\ell_{\vM,\tau}\leq \ell_\vM(\lambda)\leq \ell_{\vM,\tau}$ for all $\lambda\in\ttco$.
The claim thus follows from~\eqref{eq:hyp-length-lip}.
\end{proof}

For any $T>0$, let $C_\vM(\tau,T)=\{\lambda\in U(\tau): \ell_\vM(\lambda)\leq T\}$\index{c@$C_\vM(\tau,T)$ the set of $\lambda\in\ttc$ with $\ell_\vM(\lambda)\leq T$}. 
To simplify the notation, we will write $C_\vM(\tau)$\index{c@$C_\vM(\tau)$ the set of $\lambda\in\ttc$ with $\ell_\vM(\lambda)\leq 1$} for $C_\vM(\tau,1)$.
Let $S_\vM(\tau)\index{s@$S_\vM(\tau$ the set of $\lambda\in\ttc$ with $\ell_\vM(\lambda)=1$}=\{\lambda\in U(\tau):\ell_\vM(\lambda)=1\}$. Then 
\[
C_\vM(\tau,T)=TC_\vM(\tau)=[0,T]S_\vM(\tau).
\]

\begin{proof}[Proof of Theorem~\ref{thm:main}]
Let $\vM$ be as above.
Let $\tau_1,\ldots,\tau_{\mathsf c}$ be finitely many maximal train tracks with the following properties. 
\begin{itemize}
\item $\ML=\cup_{i=1}^{\mathsf c} U(\tau_i)$, and 
\item $\ell_\vM: U(\tau_i)\to\bbr$ is $\lipc_i$-Lipschitz for all $1\leq i\leq \mathsf c$.
\end{itemize}  
Let $\lipc=\max\lipc_i$; increasing $\lipc$ if necessary 
we will also assume that the conclusion of Lemma~\ref{lem:support-plane} holds with $\lipc$.

Let us fix some $1\leq i\leq \mathsf c$ and write $\tau=\tau_i$; when there is no confusion we drop $\tau$
from the notation for the norm and normalization in $U(\tau)$. We will first consider the contribution coming from 
$U(\tau)$ and then will combine contributions of different $\tau_i$ for $1\leq i\leq \mathsf c$. 

In the following we will use the following upper bound estimate for the number of integral point in a Euclidean region: 
the number of lattice points in a Euclidean region is $\ll$ the volume of the 1-neighborhood of the region. 
 
Let $\gamma_0$ be a rational (multi) geodesic. 
For every $T>0$ define 
\be\label{eq:hyp-count-tau-i}
\mathcal N_\tau(\gamma_0,T)=\#\{\mce\gamma_0\in U(\tau): \ell_\vM(\mce\gamma_0)\leq T\}.
\ee

Fix some $\delta>0$; this will be optimized later and will be chosen to be of size $T^{-\star}$. 
Define 
\be\label{eq:ttco}
\ttcod\index{p@$\ttcod$ points in $\ttco$ where each coordinates is at least $2\delta$}:=\{(b_i)\in\ttco: b_i\geq 2\delta\text{ for all $i$}\}.
\ee

Cover $\ttco$ with cubes of size $\delta$ with disjoint interior.
Let $\{U_j: j\in J_\delta\}$ be the subcollection of these cubes so that $U_j\cap \ttcod\neq\emptyset$

For every $j$, let $\lambda_j\in U_j$ be the center of $U_j$. 
The number of $U_j$'s required to cover $\ttco$ is $\ll \delta^{-\constA\label{a:E-Latt-count-2}}$ for some 
$\ref{a:E-Latt-count-2}$ depending on $\tau$.


There is some $\consta\label{k:E-Latt-count-2}$, depending only on the dimension, with the following property. 
If $\delta\geq T^{-\ref{k:E-Latt-count-2}}$, then the number of integral points $\gamma\in\ttc$ with $\|\gamma\|\leq \ell_{\vM,\tau} T$ and 
\be\label{eq:ttco-ttcod}
\bar\gamma={\gamma}/{\|\gamma\|}\in\ttco\setminus\ttcod
\ee 
is $\ll \delta T^h$.

For each $j$, let $U_{j,-}$ denote the cube which has the same center $\lambda_j$ as $U_j$, but has size 
$\delta-\delta^{\constA\label{a:E-Latt-count-1}}$ where $\ref{a:E-Latt-count-1}=\ref{a:E-Latt-count-2}+1$.

Then, if $\delta^{\ref{a:E-Latt-count-1}}\geq T^{-\ref{k:E-Latt-count-2}}$, the number of integral points 
$\gamma\in\ttc$ with $\|\gamma\|\leq \ell_{\vM,\tau} T$ and 
\be\label{eq:Uj-Uj-}
\bar\gamma\in\bigcup_j U_j\setminus U_{j,-}
\ee
is $\ll \delta^{-\ref{a:E-Latt-count-2}}\delta^{\ref{a:E-Latt-count-1}} T^h\ll \delta T^h$.

Altogether, we have: if $\delta^{\ref{a:E-Latt-count-1}}\geq T^{-\ref{k:E-Latt-count-2}}$ , then   
\be\label{eq:number-exceptional}
\#\{\gamma\in \MC.\gamma_0\cap U(\tau): \ell_\vM(\gamma)\leq T, \bar\gamma\text{ satisfies~\eqref{eq:ttco-ttcod} or~\eqref{eq:Uj-Uj-}}\}\ll \delta T^h.
\ee

We now find an estimate for 
\[
\#\{\gamma\in \MC.\gamma_0\cap C_\vM(\tau,T):\bar\gamma\in \cup U_{j,-}\}.
\]
Put $U_{j,-,+}=\{\tfrac{\lambda}{\ell_\vM(\lambda_j)-\lipc\delta}:\lambda\in U_{j,-}\}$ and 
$U_{j,-,-}=\{\tfrac{\lambda}{\ell_\vM(\lambda_j)+\lipc\delta}:\lambda\in U_{j,-}\}$. 
Then it follows from~\eqref{eq:hyp-length-lip} that
\[
[0,1]U_{j,-,-}\subset \{\lambda\in C_\vM(1,\tau):\bar\lambda\in U_{j,-}\}\subset [0,1]U_{j,-,+}
\]
Therefore, applying Theorem~\ref{prop:counting}, with $U=U_{j,-,\pm}$, we get that
\begin{multline*}
\tfrac{{\rm v}(\gamma_0)\mu_{\rm Th}(U_{j,-})}{h(\ell_\vM(\lambda_j)+\lipc\delta)^h}T^h+ O_{\tau,\gamma_0}(T^{h-\ref{a:final-U-exp-prop}})\leq\\ 
\#\{\gamma\in\MC.\gamma_0: \gamma\in C_\vM(\tau,T),\bar\gamma\in U_{j,-}\}\leq 
\tfrac{{\rm v}(\gamma_0)\mu_{\rm Th}(U_{j,-})}{h(\ell_\vM(\lambda_j)-\lipc\delta)^h}T^h+ O_{\tau,\gamma_0}(T^{h-\ref{a:final-U-exp-prop}});
\end{multline*}
this estimate implies that 
\begin{multline}\label{eq:hyp-length-j-2}
\#\{\gamma\in\MC.\gamma_0: \gamma\in C_\vM(\tau,T),\bar\gamma\in U_{j,-}\}=\\
\tfrac{{\rm v}(\gamma_0)\mu_{\rm Th}(U_{j,-})}{h(\ell_\vM(\lambda_j))^h}T^h+ O_{\tau,\gamma_0}(\delta \mu_{\rm Th}(U_{j,-}) T^h+T^{h-\ref{a:final-U-exp-prop}}).
\end{multline}

Let us put $S_{\vM}(\tau,j)=\{\lambda\in S_\vM(\tau): \bar\lambda\in U_{j,-}\}$. Then by Lemma~\ref{lem:support-plane} we have 
\[
\mu_{\rm Th}([0,1]S_{\vM}(\tau,j))=\int_{U_{j,-}}\tfrac{1}{h\ell_\vM(\lambda)^h}\operatorname{d}\!\mu_{\rm Th}=\frac{\mu_{\rm Th}(U_{j,-})}{h(\ell_\vM(\lambda_j))^h}+O(\delta)\mu_{\rm Th}(U_{j,-}).
\]
This observation together with~\eqref{eq:hyp-length-j-2} gives that 
\begin{multline}\label{eq:hyp-length-j}
\#\{\gamma\in\MC.\gamma_0: \gamma\in C_\vM(\tau,T),\bar\gamma\in U_{j,-}\}=\\
{\rm v}(\gamma_0)\mu_{\rm Th}([0,1]S_{\vM}(\tau,j))T^h+ O_{\tau,\gamma_0}(\delta \mu_{\rm Th}(U_{j,-})T^h+T^{h-\ref{a:final-U-exp-prop}}).
\end{multline}
Recall also that $\ell_\vM^{\pm1}$ is bounded on $\ttco$; we have 
$\sum\mu_{\rm Th}([0,1]S_{\vM}(\tau,j))=\mu_{\rm Th}([0,1]S_\vM(\tau))+O(\delta^\star)$.
Hence, summing~\eqref{eq:hyp-length-j} over all $j$'s we get
\begin{multline}\label{eq:hyp-length-tau-delta}
\#\{\gamma\in\MC.\gamma_0: \gamma\in C_\vM(\tau,T),\bar\gamma\in \cup U_{j,-}\}=\\
{\rm v}(\gamma_0)\mu_{\rm Th}([0,1]S_{\vM}(\tau))T^h+ O_{\tau,\gamma_0}(\delta^\star T^h+\delta^{-\ref{a:E-Latt-count-2}} T^{h-\ref{a:final-U-exp-prop}}).
\end{multline}
Now choose $\delta=T^\star$ so that 
$\delta^\star T^h+\delta^{-\ref{a:E-Latt-count-2}} T^{h-\ref{a:final-U-exp-prop}}=T^{h-\consta\label{a:final-exp-hyp-tau}}$. 
Then we get from~\eqref{eq:hyp-length-tau-delta} and~\eqref{eq:number-exceptional} that
\be\label{eq:hyp-length-tau}
\#\{\gamma\in\MC.\gamma_0: \gamma\in C_\vM(\tau,T)\}={\rm v}(\gamma_0)\mu_{\rm Th}([0,1]S_{\vM}(\tau))T^h+O(T^{h-\ref{a:final-exp-hyp-tau}}).
\ee
This conclude the contribution arising from a single train track chart $U(\tau)$. 

Recall now that the regions in $U(\tau_i)$ which are carried by other $U(\tau_{i'})$
are finite sided polyhedra, see Lemma~\ref{lem:p-linear}. We may thus find disjoint finite sided polyhedra $\cube_i\subset P(\tau_i)$
to the $\cup\bbr^+.\,\cube_i=\ML$.    
Repeating the above argument for each $\cube_i$, the theorem follows from the estimate in~\eqref{eq:hyp-length-tau}.  
\end{proof}

%


We conclude with the following which are of independent interest. 
Let $\Gamma\subset\MC$ be a finite index subgroup and let $\tau$ be a maximal train track.
Define 
\[
\Nip_{\Gamma,\tau}(\gamma_0,T)\index{n@ $\Nip_{\Gamma,\tau}(\gamma_0,T)$}:=\left\{\gamma\in\Gamma.\gamma_0\cap U(\tau): \|\gamma\|_\tau\leq T\right\}.
\]

\begin{theorem}\label{thm:ttc-coutning}
There exists some $\consta\label{a:tc-final-Gamma}=\ref{a:tc-final-Gamma}(\Gamma)$ so that the following holds.
For every rational multi curve $\gamma_0\in\ttc$, there exists some constant $c_{\Gamma,\tau}(\gamma_0)$ so that
\[
\#\Nip_{\Gamma,\tau}(\gamma_0,T)=c_{\Gamma,\tau}(\gamma_0)T^{6g-6}+O_{\gamma_0,\tau,\Gamma}(T^{6g-6-\ref{a:tc-final-Gamma}})
\]
\end{theorem}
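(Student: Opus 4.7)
The plan is to rerun the proof of Theorem~\ref{prop:counting} on the finite cover $X_\Gamma := \Gamma\backslash\tAmn$ of $\Qalpha = \MC\backslash\tAmn$, rather than on $\Qalpha$ itself. The covering map $X_\Gamma \to \Qalpha$ has degree $[\MC:\Gamma]$, and both the Teichm\"uller geodesic flow and the $\SL(2,\bbr)$-action lift to $X_\Gamma$, preserving a natural probability measure $\mu_\Gamma$ in the Lebesgue class; the horospherical conditional measures and all the invariant foliation structure lift accordingly.

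First I would extend Theorem~\ref{thm:EM} to $(X_\Gamma,\mu_\Gamma)$. Writing $L^2_0(X_\Gamma) = L^2_0(\Qalpha)\oplus V$, where $V$ consists of functions whose average on each fiber of $X_\Gamma \to \Qalpha$ vanishes, exponential mixing on $L^2_0(\Qalpha)$ is given by Theorem~\ref{thm:EM}. On $V$ the same bound holds because the Veech--style symbolic coding underlying~\cite{AGY-EM, AG-Eigen} lifts to a finite-to-one extension over $X_\Gamma$, and on each isotypic component for the finite group $\MC/\Gamma$ the associated transfer operator inherits the same spectral gap by the standard passage-to-finite-covers argument. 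Every other ingredient used in \S\ref{sec:notation}--\S\ref{sec:linear-ml-qt} passes to $X_\Gamma$ with only notational changes: compact subsets of $\Qalpha$ pull back to compact subsets of $X_\Gamma$, so the nondivergence statements Theorem~\ref{thm:fast-return} and Corollary~\ref{cor:nondiv-horosphere}, the uniform hyperbolicity of Proposition~\ref{prop:unif-hyp}, the modified Hodge and AGY norms of \S\ref{sec:hodge-norm}--\S\ref{sec:AGY-norm}, the period-box structure, and the train-track charting of Lemma~\ref{lem:box-linear} all lift.

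With mixing in place on $X_\Gamma$, the thickening argument of Proposition~\ref{prop:thickening-smooth} runs verbatim on $X_\Gamma$ and delivers the $X_\Gamma$-analogs of Corollary~\ref{cor:thickening-box}, Proposition~\ref{prop:ncc}, and Corollary~\ref{cor:nnc-notsmooth}. I would then adapt \S\ref{sec:int-pts} by replacing $\mce\in\MC$ by $\mce\in\Gamma$ throughout. In particular, the $\Gamma$-analog of Lemma~\ref{lem:int-horo} identifies the condition $\mce\gamma_0\in\Nip_{\Gamma,\tau}(\gamma_0,T)$ with $\bar\gamma\in\cube$ with the condition that $\mce\cdot a_{-t}\tmlhoro(\tilde q_0)\cap\PU_{\cube,\epsilon}\neq\emptyset$, and the $\Gamma$-analog of Lemma~\ref{lem:number-of-lifts} counts such orbit pieces by equidistribution on $X_\Gamma$. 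Partitioning $\ttco$ into cubes of size $T^{-\star}$ exactly as in the proof of Theorem~\ref{prop:counting} yields
\[
\#\Nip_{\Gamma,\tau}(\gamma_0,T)={\rm v}_\Gamma(\gamma_0)\,\mu_{\rm Th}([0,1]\ttco)\,T^{6g-6}+O_{\gamma_0,\tau,\Gamma}(T^{6g-6-\star}),
\]
where ${\rm v}_\Gamma(\gamma_0)$ is the natural volume of the properly immersed submanifold $\Gamma\backslash\tmlhoro(\tilde q_0)$ in $X_\Gamma$ (the analog of ${\rm v}(\gamma_0)$ from~\eqref{eq:volume-W+}); this gives the claim with $c_{\Gamma,\tau}(\gamma_0)={\rm v}_\Gamma(\gamma_0)\mu_{\rm Th}([0,1]\ttco)$.

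The main obstacle is the first step, namely extending AGY exponential mixing to the finite cover $X_\Gamma$; once this is available, nothing else in \S\ref{sec:exp-mix}--\S\ref{sec:int-pts} cares whether we work on $\Qalpha$ or on a finite cover, because the arguments are either local (period boxes, train-track charts, Hubbard--Masur) or depend only on compactness and the $\SL(2,\bbr)$-invariant measure class. In particular, the fact that $\MC.\gamma_0$ is a disjoint union of finitely many $\Gamma$-orbits makes the exponent $\ref{a:tc-final-Gamma}$ dependent on $\Gamma$ only through the spectral gap on the cover $X_\Gamma$, consistent with the statement.
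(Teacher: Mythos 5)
Your proposal follows essentially the same route as the paper's proof: lift the Masur--Veech measure to the finite cover $\mathcal Q^1\mathcal T(1,\ldots,1)/\Gamma$, treat exponential mixing of the Teichm\"uller geodesic flow for $\mu_\Gamma$ on that cover as the key input (and the source of the $\Gamma$-dependence of the exponent), and rerun Theorem~\ref{prop:counting} with $\Gamma$ in place of $\MC$ --- this is precisely what the paper does, merely in more compressed form. One small caveat in your sketch of the mixing step: $\Gamma$ is only assumed of finite index, not normal, so $\MC/\Gamma$ need not be a group and one cannot decompose into isotypic components for it directly; the standard remedy is to pass first to the normal core $\bigcap_{g\in\MC}g\Gamma g^{-1}$, a finite-index normal subgroup, which does not affect anything else in the argument.
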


\begin{proof}
The argument is similar to our argument in the proof of Theorem~\ref{thm:main-MLS}.
Recall that we normalized the Masur-Veech measure to be a probability measure on $\mathcal Q_1(1,\ldots,1)$.
Let $\mu_\Gamma$ denote the lift of the Masur-Veech measure to $\mathcal Q^1\mathcal T(1,\ldots,1)/\Gamma$, then 
$\mu_\Gamma(\mathcal Q^1\mathcal T(1,\ldots,1)/\Gamma)=[\MC:\Gamma]$.

Similar to~\eqref{eq:volume-W+}, define ${\rm v}_\Gamma(\gamma_0)$ to be the measure of the lift of $W^{\unst}(q_0)$  
to $\mathcal Q^1\mathcal T(1,\ldots,1)/\Gamma$ where $\mathfrak{I}(q_0)=\gamma_0$.

Now, by virtue of Theorem~\ref{prop:counting}, we have
\[
\#\{\gamma\in\Gamma.\gamma_0\cap\ttc: \|\gamma\|_\tau\leq T\}={\rm v}'_\Gamma(\gamma_0)\mu_{\rm Th}([0,1]U(\tau))T^h+O_{\gamma_0,\tau,\Gamma}(T^{h-\ref{a:tc-final-Gamma}})
\] 
where ${\rm v}'_\Gamma(\gamma_0)={\rm v}_\Gamma(\gamma_0)/[\MC:\Gamma]$ and ${\rm v}_\Gamma(\gamma_0)$ is as above.

The exponent $\ref{a:tc-final-Gamma}$ depends on the exponential mixing rate for the Teichm\"{u}ller geodesic flow on $(\mathcal Q^1\mathcal T(1,\ldots,1)/\Gamma,\mu_\Gamma)$.  
\end{proof}

Let $\Gamma\subset\MC$ be a finite index subgroup.
Given a rational multi-geodesics $\gamma_0$ on $\vM$ define 
\[
s_{\vM,\Gamma}(\gamma_0, T)\index{s@$s_{\vM,\Gamma}(\gamma_0, L)$}:=\#\{\gamma\in\Gamma.\gamma_0:\ell_\vM(\gamma)\leq T\}
\]
We also have the following generalization of Theorem~\ref{thm:main}.

\begin{theorem}\label{thm:main-Gamma}
There exists some $\consta\label{k:final-Gamma}=\ref{k:final-Gamma}(\Gamma)>0$, dependence on $\Gamma$ is related to the exponential mixing rate for the Teichm\"{u}ller geodesic flow on $\mathcal Q^1\mathcal T(1,\ldots,1)/\Gamma$, and some $c=c(\gamma_0, \vM, \Gamma)$ so that the following holds.  
\[
s_{\vM,\Gamma}(\gamma_0,T)=c\,T^{6g-6}+O_{\gamma_0, \vM,\Gamma}(T^{6g-6-\ref{k:final-Gamma}})
\]
\end{theorem}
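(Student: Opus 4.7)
The plan is to mimic the proof of Theorem~\ref{thm:main} carried out in \S\ref{sec:proof-thm}, using Theorem~\ref{thm:ttc-coutning} as the input in place of Theorem~\ref{prop:counting}. First I would fix maximal train tracks $\tau_1,\ldots,\tau_{\mathsf c}$ that cover $\ML$, chosen so that $\ell_M:U(\tau_i)\to\bbr^+$ is $\lipc_i$-Lipschitz by Corollary~\ref{cor:length-lip}; set $\lipc=\max_i \lipc_i$. By Lemma~\ref{lem:p-linear} the overlap regions are piecewise linear, so I can partition $\ML$ into finitely many disjoint finite-sided polyhedra, each contained in some cone $\bbr^+.\cube_i$ with $\cube_i\subset P(\tau_i)$, and it suffices to count $\Gamma.\gamma_0$-points of bounded hyperbolic length inside each such sub-cone separately.

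Fix one such train track $\tau$ and one polyhedral piece $\cube\subset P(\tau)$. I would then partition $\cube$ into small cubes $U_j$ of size $\delta$ with centers $\lambda_j$, and form slightly shrunken cubes $U_{j,-}$ of size $\delta-\delta^{\star}$. By Lemma~\ref{lem:support-plane} and the Lipschitz bound~\eqref{eq:hyp-length-lip} one has $\ell_M(\lambda)=\ell_M(\lambda_j)+O(\lipc\delta)$ on $U_{j,-}$, so the $\ell_M$-sector in the cone over $U_{j,-}$ is sandwiched between two scaled linear sectors of the form $[0,T/(\ell_M(\lambda_j)\pm\lipc\delta)]\,U_{j,-}$. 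Applying Theorem~\ref{thm:ttc-coutning} to each of these scaled sectors, dividing by $\ell_M(\lambda_j)^{h}$, and summing in $j$ would yield the main term $c(\gamma_0,M,\Gamma)T^{6g-6}$, where $c$ is expressible as ${\rm v}'_\Gamma(\gamma_0)$ times the $\mu_{\rm Th}$-measure of the hyperbolic unit ball in the cone, with error
\[
O_{\gamma_0,M,\Gamma}\bigl(\delta\,T^{h}+\delta^{-\star}\,T^{h-\ref{a:tc-final-Gamma}}\bigr).
\]
Exceptional contributions from $\bar\gamma$ lying near $\partial\cube$ or in $U_j\setminus U_{j,-}$ are bounded by a standard Euclidean lattice-point count (the volume of an $O(\delta)$-neighborhood of a codimension-one set intersected with $\|\cdot\|_\tau\leq \ell_{M,\tau}T$) and contribute $O(\delta T^{h})$. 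Optimizing $\delta=T^{-\star}$ balances the two error terms and gives a power saving $T^{6g-6-\ref{k:final-Gamma}}$.

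The substantive content is already packaged in Theorem~\ref{thm:ttc-coutning}: that theorem, in turn, rests on Proposition~\ref{prop:ncc} (and hence on the exponential mixing estimate of Theorem~\ref{thm:EM}) lifted to the finite cover $\mathcal Q^1\mathcal T(1,\ldots,1)/\Gamma$. This is where the dependence of $\ref{k:final-Gamma}$ on $\Gamma$ enters, through the exponential mixing rate on the $\Gamma$-quotient. Once Theorem~\ref{thm:ttc-coutning} is accepted as a black box, no new geometric input is required beyond what was already used for $\Gamma=\MC$; the ``hard part'' that one might worry about --- the possibility that the mixing rate could degenerate as $[\MC:\Gamma]$ grows --- is not an issue here since for any single finite-index $\Gamma$ the quotient flow has a positive spectral gap by~\cite{AGY-EM, AJ-EMQ, AG-Eigen}, so $\ref{k:final-Gamma}>0$ depending on $\Gamma$ can indeed be extracted. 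The constant $c(\gamma_0,M,\Gamma)$ is then read off as the sum over the chart-pieces $\cube_i$ of the integrals of ${\rm v}'_\Gamma(\gamma_0)\ell_M^{-h}$ against $\mu_{\rm Th}$, which is finite since $\ell_M$ is proper on $\ML$.
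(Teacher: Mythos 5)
Your proposal is correct and takes essentially the same route as the paper, which simply states that the proof of Theorem~\ref{thm:main} applies mutatis mutandis with the $\Gamma$-lift of the Masur--Veech measure and the exponential mixing rate on $\mathcal Q^1\mathcal T(1,\ldots,1)/\Gamma$ supplying $\ref{k:final-Gamma}$. The one small clarification worth making is that what is actually re-used inside the sector-by-sector argument is the $\Gamma$-analogue of Theorem~\ref{prop:counting} (applied to cubes $\cube\subset\ttco$) rather than Theorem~\ref{thm:ttc-coutning} itself, which covers the full chart; but this is exactly the same ingredient underlying the paper's ``mutatis mutandis,'' so the substance matches.
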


\begin{proof}
Similar to the discussion in the proof of Theorem~\ref{thm:ttc-coutning}, the proof of Theorem~\ref{thm:main} applies mutatis mutandis to $s_{\vM,\Gamma}(\gamma_0, T)$. 
\end{proof}

\printindex

%

\end{document}